\newcommand{\ER}{Erd\H{o}s-R\'{e}nyi\xspace}
\newtheorem{theorem}{Theorem}[section]
\newtheorem{definition}[theorem]{Definition}
\newtheorem{assumption}[theorem]{Assumption}
\numberwithin{equation}{section}
\newtheorem{lemma}[theorem]{Lemma}
\newtheorem{proposition}[theorem]{Proposition}
\newtheorem{coro}[theorem]{Corollary}
\newtheorem{property}[theorem]{Property}
\newtheorem{remark}[theorem]{Remark}
\newtheorem{claim}[theorem]{Claim}
\numberwithin{equation}{section}
\renewcommand{\phi}{\varphi}
\renewcommand{\epsilon}{\varepsilon}
\newcommand{\DS}{\displaystyle}
\renewcommand{\emptyset}{\varnothing}
\newcommand{\tn}{|\kern-.1em|\kern-0.1em|}
\mathchardef\ordinarycolon\mathcode`\:
\newcommand\be{\begin{equation}}
\newcommand\ee{\end{equation}}
\newcommand{\mystar}{\frac{Ln}{3C}}
\newcommand{\mysquare}{Lrn}
\newcommand{\myregion}{[-R-4Cr,R+4Cr]}
\title{Optimal Private Median Estimation\\ under Minimal Distributional Assumptions}
\author{
  Christos Tzamos \thanks{Supported by the NSF grant CCF-2008006.}\\
  Department of Computer Science\\
  University of Wisconsin‐Madison\\
  \texttt{tzamos@wisc.edu } \\
  \and
  Emmanouil V. Vlatakis-Gkaragkounis\thanks{ Supported by NSF grants CCF-1703925, CCF-1763970,
CCF-1814873, CCF-1563155, the Simons Collaboration on Algorithms and Geometry, and partially by the Onassis Foundation - Scholarship ID: F ZN 010-1/2017-2018.
} \\
  Department of Computer Science\\
  Columbia University\\
  \texttt{emvlatakis@cs.columbia.edu} \\
  \and
  Ilias Zadik\thanks{Supported by a CDS-Moore-Sloan Postdoctoral Fellowship.}\\
  Center for Data Science\\
  New York University\\
  \texttt{zadik@nyu.edu } \\
}
\begin{document}

\maketitle

\begin{abstract}

We study the fundamental task of estimating the median of an underlying distribution from a finite number of samples, under pure differential privacy constraints. We focus on distributions satisfying the minimal assumption that they have a positive density at a small neighborhood around the median. In particular, the distribution is allowed to output unbounded values and is not required to have finite moments. We compute the exact, up-to-constant terms, statistical rate of estimation for the median by providing nearly-tight upper and lower bounds. Furthermore, we design a polynomial-time differentially private algorithm which provably achieves the optimal performance. At a technical level, our results leverage a Lipschitz Extension Lemma which allows us to design and analyze differentially private algorithms solely on appropriately defined ``typical'' instances of the samples.
\end{abstract}
\clearpage
\tableofcontents
\clearpage

\section{Introduction}
\label{section:Main-Introduction}
Aggregating large amounts of data plays an increasingly important role across various scientific fields.
However, what makes the aggregated data so important is the same element that renders the privacy of the individuals critical.
For example, in the medical field while patient records may be used aggregately to offer faster and better diagnoses, it also amplifies concerns regarding individual privacy in how this data is being used.
Similar concerns permeate nowadays various fields of modern society, forcing even  governments to adopt new and sweeping privacy laws such as the EU’s General Data Protection Regulation 2016/679 (GDPR). From a statistics point of view, the natural and urging associated statistical question is what information of the data can be released while respecting, to the maximum possible extent, the privacy of the individuals involved. Straightforward approaches, such as removing obvious identifiers or releasing only the summaries that concern at least a certain number of members, i.e in a social network or in a HIV-patient health-care database, can be easily broken \cite{dwork2017exposed,rigaki2020survey,borgs2018revealing}.

To understand the emerging trade-off between information and privacy, researchers have employed the notion of differential privacy \cite{Dwork06,DworkMNS06,DworkR14}. In this framework, researchers reported the outcome of their analyses having firstly
injected a well-calibrated noise. More precisely, the level of the noise has been chosen
in such a way that it would be computationally arduous to reconstruct any personal micro data
but  would simultaneously permit an accurate estimation of group statistics.

In this work, we study the optimal statistical performance of a differential private estimator for the task of \emph{median estimation}, one of the simplest and most preferred  statistical index
in various scientific fields associated with an one-dimensional data-set. Despite the fact that the introduction of the median even as a term is relatively recent (the earliest trace in English literature appears to be in 1881
by Francis Galton in one of his surveys 
\cite{galton1882report,david1995first}), it seems that, as a measure of
central tendency, it behaves much more robustly \cite{Huber81,hampel2011robust} than many other statistical indices, such as mean and mode, especially due to its high break-down point \cite{davies2007breakdown,huberbreakdown}. 
As a simple, yet relevant example, the nominal median income reflects much better the real-life setting than
the mean corresponding index. 
Indeed, the median household income -- the income cut-off where half of the households earn more, and half earn less --
is much more robust and illustrative than the mean estimator which could easily have been biased by a small number of billionaires. Furthermore, even the task of determining the critical high-risk age groups during the ongoing COVID-19 pandemic, median estimator plays a significant role in balancing out some possible outliers \cite{dicker2006principles,mediancdc}.

In all these cases, median gives crucial information causing probably several social, political and financial implications.
More importantly, however, safeguarding the individual privacy of the members of a survey could prevent acts of  discrimination
based on the leaked personal data. In social networks 
\cite{task2012guide,Kearns913,zhu2017differentially,liu2020local} 
or medical applications 
\cite{DankarE13,MalinEO13,FredriksonLJLPR14,ZhangLZHS17,YukselKO17} 
for instance, samples may correspond to human subjects and it is crucial that the estimator maintains their privacy while still producing accurate results. 

\textbf{Our model.}
We focus on the statistical problem of estimating \emph{the median of a probability distribution} under privacy constraints. Given sample access to an unknown distribution $\mathcal{D}$ with median $m\left(\mathcal{D}\right)$, the goal is to produce an estimate $\hat m$ that satisfies $|m\left(\mathcal{D}\right) - \hat m| \le \alpha$ with probability $1-\beta$ while respecting the privacy of the samples. An important property of the median estimation task is that it can be efficiently estimated using few samples even under heavy tailed distributions whose mean may not even be well-defined. A minimal statistical assumption for estimation is a mild concentration of the distribution around the median, i.e. that for some parameters $L$ and $r$, the distribution has density at least $L$ in an interval $[m\left(\mathcal{D}\right)-r,m\left(\mathcal{D}\right)+r]$ around the median. In such a case it can be shown that $O(\log(1/\beta) / (\alpha^2 L^2) )$ samples suffice to learn the median within $\alpha < r$ with probability $1-\beta$ \cite{ptr20}.

As mentioned above, differential privacy is the leading measure for quantifying privacy guarantees for an arbitrary randomized function of many input points. It is parameterized by a value $\epsilon$ and it requires that changing the value of any input point may change the probability of any output by at most a factor of $e^{\epsilon}.$ More formally we use the Hamming distance $d_H\left(X, X'\right):= |\{i \in [n] \Big| X_i \neq
X_i'\}|,$ defined for two $n$-tuples of samples $X,X' \in \mathbb{R}^n.$
\begin{definition}\label{definition:epsilon-privacy}
 A randomized algorithm $\mathcal{A}$ is $\epsilon$-differential private if for all subsets $S \in \mathcal{F}$ of the output space $(\Omega, \mathcal{F})$ and $n$-tuples of samples $X_1,X_2 \in \mathbb{R}^n$, it holds  \begin{equation} \label{privdfn}\mathbb{P}\left(\mathcal{A}(X_1) \in S\right) \leq e^{\epsilon d_H\left(X_1,X_2\right)}\mathbb{P}\left(\mathcal{A}(X_2) \in S\right).\end{equation}
\end{definition}

While there has been extensive work in statistical estimation under privacy constraints, obtaining private estimators with optimal sample complexity was only recently shown to be possible. Karwa and Vadhan \cite{KarwaV18} studied the basic problem of learning the parameters of a univariate Gaussian and obtained the optimal rate for estimation. Subsequently, sample efficient estimators were obtained for learning multi-variate Gaussians and product distributions~\cite{Mixtures,MRF,Kamath0SU19}. Further work obtained minimax rates for various other statistical problems~\cite{DiscreteGaussian,HeavyTailed,DuchiWJ16}, including mean estimation for heavy tailed distributions \cite{HeavyTailed,DuchiWJ16}, parameter estimation for \ER graphs and stochastic block models~\cite{BorgsNeurIPS15, BorgsCSZ18,SU19} and general hypothesis selection \cite{PHS,LocallyPrivateHypothesisSelection}.

\subsection{Our contributions}\label{intro:contrib}

Our work considers the setting of the recent work by Avella-Medina and Brunel \cite{Subgaussian,ptr20} who studied the problem of median estimation under arbitrary distributions $\mathcal{D}$ that satisfy a mild concentration condition and whose median lies in a bounded range.

\begin{assumption} \label{assumption:median-mass}
Let  $r,R,L>0$ be fixed positive constants with $Lr \leq \frac{1}{2}.$ The distribution $\mathcal{D}$ has a unique median $m\left(\mathcal{D}\right) \in [-R,R]$. Moreover, it admits a density with respect to the Lebesgue measure $f(u)$ when $u\in [m\left(\mathcal{D}\right)-r,m\left(\mathcal{D}\right)+r]$ and furthermore it holds  $f(u)\geq L$, for all $u\in [m\left(\mathcal{D}\right)-r,m\left(\mathcal{D}\right)+r]$.
\end{assumption}Note that, under the Assumption~\ref{assumption:median-mass} the distribution can output arbitrarily large values and is not required to have finite moments.

We call distributions that satisfy this Assumption~\ref{assumption:median-mass} \emph{admissible}. For the class of admissible distributions, \cite{Subgaussian} obtained two private estimators that in $\tilde{O}(n)$-time produce an estimate $\hat m$ for the median such that $|m\left(\mathcal{D}\right) - \hat m| \le \alpha$ with probability $1-\beta$ using a sample size $n$. The estimators achieve sample complexity
$$\Omega\left(\frac{\log \frac{1}{\beta}}{L^2 \alpha^2}+\frac{\log^2 \frac{1}{\beta} \log \frac{1}{\delta} }{\epsilon^2L\alpha}+\frac{\log( \frac R \alpha + 1 ) \log \frac{1}{\delta}}{\epsilon L r} \right) \quad \text{ and } \quad \Omega\left(\frac{\log \frac{1}{\beta}}{L^2 \alpha^2}+\frac{\log^2 \frac{1}{\beta} (\log\frac{1}{\beta} + \log \frac{1}{\delta}) }{\epsilon^2L\alpha} \right)$$

They both work under the $(\epsilon,\delta)$-differential privacy setting (see Definition~\ref{definition:epsilon-delta-privacy}) which is a weaker guarantee than pure differential privacy.

While, the estimators from \cite{Subgaussian} establish indeed the finite sample complexity on learning the median privately, our main result is a significantly improved estimator which learns the median using less samples and works under the stricter pure $\epsilon$-differential privacy.
\clearpage
\begin{theorem}
There exists an $\epsilon$-differentially~private algorithm that draws $$n=O\left( \frac{\log(\frac{1}{\beta})}{L^2 \alpha^2} +\frac{ \log \left(\frac{1}{\beta}\right)}{\epsilon L\alpha }+\frac{\log \left(\frac{R}{\alpha}+1\right)}{\epsilon Lr} \right)$$ samples from any admissible distribution $\mathcal{D}$  
and in $\operatorname{poly}(n)$ time produces an estimate $\hat m$ that satisfies $|m(\mathcal{D}) - \hat m| \le \alpha$ with probability $1-\beta$.
\end{theorem}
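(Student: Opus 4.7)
The plan is to produce the estimate by running the exponential mechanism on a discretization of $[-R,R]$ at resolution $\alpha$, with the rank-symmetry score
\[
s(x,X) := -\bigl|\#\{i : X_i \le x\} - \#\{i : X_i > x\}\bigr|,
\]
which is maximized at the empirical median and has Hamming sensitivity at most $2$. Since the candidate set has size $O(R/\alpha)$, this can be implemented in $\mathrm{poly}(n, R/\alpha)$ time, and privacy follows immediately from the standard exponential mechanism guarantee. Accuracy then reduces to establishing a quantitative score gap between $m(\mathcal{D})$ and every bad candidate.

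The first step is to introduce a \emph{typical} event $T$ on the sample: namely, that $|\#\{i : X_i \le x\} - n\,\mathbb{P}(X_1 \le x)|$ is uniformly small at the points $x \in \{m(\mathcal{D}) + k\alpha : |k|\alpha \le r\} \cup \{m(\mathcal{D}) \pm r\}$. A Bernstein bound combined with a union bound over this grid shows $\mathbb{P}(T) \ge 1 - \beta/3$ as soon as $n = \Omega(\log(1/\beta)/(L^2\alpha^2))$, which produces the first term in the claimed sample complexity. Conditioning on $T$ and using the density lower bound $f \ge L$ on $[m(\mathcal{D})-r,m(\mathcal{D})+r]$ together with the monotonicity of $\#\{X_i \le x\}$ in $x$, one obtains
\[
s(m(\mathcal{D}),X) - s(x,X) \ge \begin{cases} 2nL|x-m(\mathcal{D})| & \text{if } \alpha \le |x-m(\mathcal{D})| \le r, \\ 2nLr & \text{if } |x-m(\mathcal{D})| > r, \end{cases}
\]
up to an additive $O(\sqrt{n\log(1/\beta)})$ slack absorbed into constants.

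Applying the standard utility theorem for the exponential mechanism and partitioning bad candidates by regime yields the remaining two terms. The outer regime has $O(R/\alpha)$ candidates, each with gap at least $2nLr$, contributing failure probability $\lesssim (R/\alpha)\exp(-\epsilon nLr/4)$, which is below $\beta/3$ once $n = \Omega(\log(R/\alpha+1)/(\epsilon Lr))$ — the third term. The middle regime contributes a geometric series of ratio $\exp(-\epsilon nL\alpha/4)$, whose total mass falls below $\beta/3$ once $n = \Omega(\log(1/\beta)/(\epsilon L\alpha))$ — the second term. A union bound over the three failure events then gives $|\hat m - m(\mathcal{D})| \le \alpha$ with probability at least $1 - \beta$.

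The hard part — and the motivation for the Lipschitz Extension Lemma emphasized in the abstract — is that the sharp score gap above is genuine only on $T$, while the exponential mechanism must be fed a score with globally bounded Hamming sensitivity in order to guarantee privacy on \emph{every} input. My plan is to define a refined score $\tilde s(x,\cdot)$ directly on $T$, engineered so that it inherits $O(1)$ sensitivity within $T$ and the three-regime gap structure, and then to invoke the Lipschitz Extension Lemma to extend $\tilde s(x,\cdot)$ to a function on all of $\mathbb{R}^n$ with the same global sensitivity. The exponential mechanism applied to this extension is $\epsilon$-differentially private on every input by construction, while coinciding with the ideal mechanism on $T$, so the accuracy analysis goes through verbatim. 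The delicate point here is producing an extension that genuinely preserves the gap up to the boundary of $T$ rather than collapsing the score uniformly, since otherwise the $\mathbb{P}(T^c)\le\beta/3$ union bound would be forced to absorb parasitic logarithmic factors that the stated bound does not allow.
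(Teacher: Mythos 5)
Your exponential-mechanism route is genuinely different from the paper's. The paper samples from a continuous ``flattened Laplacian'' density centered at the empirical median $m(X)$; it shows this restricted mechanism is private only on a typical subset $\mathcal{H}\subset\mathbb{R}^n$ (because the Lipschitz-like control on $m(X)$ from \Cref{lemma:sensitivity-median} fails off $\mathcal{H}$), and then invokes the Extension Lemma to lift it to a globally private algorithm that agrees with the restricted one on $\mathcal{H}$. Two issues in your proposal deserve attention.

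First, the detour through a ``refined score'' and the Extension Lemma is unnecessary for your mechanism and, as written, leaves a hole. Your score $s(x,X)=-|\#\{i:X_i\le x\}-\#\{i:X_i>x\}|$ already has global Hamming sensitivity $2$, so the exponential mechanism built from it is $\epsilon$-differentially private on \emph{every} input with no extension; the gap structure is needed only for accuracy, and you correctly evaluate accuracy conditionally on the typical event $T$ while folding $\mathbb{P}(T^c)\le\beta/3$ into the union bound. The ``hard part'' you flag is therefore not a hard part of your route. It \emph{is} indispensable for the paper, whose restricted flattened-Laplacian mechanism is not even differentially private off $\mathcal{H}$, so an output-preserving private extension is forced. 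The score $\tilde s$ you propose as a replacement is never defined, and nothing in the Extension Lemma as stated delivers the gap-preserving extension you require, so that portion of your plan is not a proof. Separately, use DKW rather than Bernstein plus a union bound over the $\alpha$-grid, or the first term picks up a spurious $\log(r/\alpha)$ factor.

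Second, and more directly a gap relative to the statement, the theorem requires $\operatorname{poly}(n)$ time, while you only establish $\operatorname{poly}(n,R/\alpha)$. The sample-size bound only forces $n\gtrsim\log(R/\alpha)/(\epsilon Lr)$, so $R/\alpha$ may be exponential in $n$, and enumerating $O(R/\alpha)$ candidates is not polynomial. This is fixable in your setting --- $s(\cdot,X)$ is piecewise constant with $O(n)$ pieces determined by the sorted sample, so the exponential weights can be aggregated per piece and the output drawn in $O(n\log n)$ time --- but your proposal does not say this, and without it the running-time claim does not follow. This is exactly the sort of structural analysis the paper devotes an entire section to (showing its extended density is piecewise exponential with $\operatorname{poly}(n)$ pieces), which it highlights as a main technical contribution.
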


Our result improves upon several aspects of the prior bound and gives stronger privacy guarantees. It has linear dependence in both $1/\epsilon$ and  $\log(1/\beta)$ as opposed to quadratic/cubic. Importantly, we show that the sample complexity we obtain is tight in all possible parameters up to absolute constants. We obtain matching lower-bounds showing that any estimator that produces an $\alpha$-accurate estimate of the median must use at least a constant fraction of the samples used by our estimator.

\begin{theorem}
Consider any $\epsilon$-differentially~private algorithm. There exists an admissible  distribution $\mathcal{D}$ from which $$n=\Omega\left( \frac{\log(\frac{1}{\beta})}{L^2 \alpha^2} +\frac{ \log \left(\frac{1}{\beta}\right)}{\epsilon L\alpha }+\frac{\log \left(\frac{R}{\alpha}+1\right)}{\epsilon Lr} \right)$$
samples are required to produce an estimate $\hat m$ that satisfies $|m(\mathcal{D}) - \hat m| \le \alpha$ with probability $1-\beta$.
\end{theorem}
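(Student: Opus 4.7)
The stated lower bound is an additive sum of three terms, and we prove a matching $\Omega$-bound for each separately; the key primitive for the privacy-dependent terms is the following integrated group-privacy inequality, which holds for any coupling $\pi$ of $X\sim\mathcal{D}$ and $Y\sim\mathcal{D}'$, any measurable set $S$, any threshold $K$, and any $\epsilon$-DP algorithm $A$:
\[
\mathbb{P}_{Y\sim\mathcal{D}'}\!\left[A(Y)\in S\right]\;\le\;e^{\epsilon K}\,\mathbb{P}_{X\sim\mathcal{D}}\!\left[A(X)\in S\right]\;+\;\mathbb{P}_{\pi}\!\left[d_H(X,Y)>K\right].
\]
It follows by splitting over $\{d_H\le K\}\cup\{d_H>K\}$ and applying Definition~\ref{definition:epsilon-privacy} on the first piece while bounding $\mathbb{P}[A(Y)\in S]\le 1$ on the second.

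\medskip
\noindent\emph{Classical statistical term $\log(1/\beta)/(L^{2}\alpha^{2})$.} Pick two admissible distributions $\mathcal{D}_{+},\mathcal{D}_{-}$ that agree outside $[-\alpha,\alpha]$, each have density $L$ on this interval, and have medians $\pm\alpha$ respectively. A direct computation gives $\mathrm{KL}(\mathcal{D}_{+},\mathcal{D}_{-})=O(L^{2}\alpha^{2})$, and tensorization of KL combined with the Bretagnolle--Huber inequality forces any estimator that is $(1-\beta)$-accurate on both distributions to use $n=\Omega(\log(1/\beta)/(L^{2}\alpha^{2}))$ samples.

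\medskip
\noindent\emph{Private two-point term $\log(1/\beta)/(\epsilon L\alpha)$.} Reuse $(\mathcal{D}_{+},\mathcal{D}_{-})$ and couple each pair of samples to coincide outside $[-\alpha,\alpha]$, so each coordinate disagrees with probability at most $2L\alpha$. A Chernoff bound gives $\mathbb{P}[d_H>K]\le\beta/2$ for $K=CL\alpha n+C'\log(1/\beta)$. Applying the integrated inequality with $S$ the ``below zero'' half-line, accuracy forces $(1-\beta)\le e^{\epsilon K}\beta+\beta/2$, hence $\epsilon K=\Omega(\log(1/\beta))$ and $n=\Omega(\log(1/\beta)/(\epsilon L\alpha))$.

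\medskip
\noindent\emph{Private packing term $\log(R/\alpha)/(\epsilon L r)$.} Place $k=\Omega(R/\alpha)$ candidate medians $m_{1}<\cdots<m_{k}$ on a $2\alpha$-grid inside $[-R,R]$, and define $\mathcal{D}_{i}$ to output a uniform draw from $[m_{i}-r,m_{i}+r]$ with probability $2Lr$ and otherwise a fixed common distribution on the atoms $\{-2R,+2R\}$; every $\mathcal{D}_{i}$ is admissible. Any pair $\mathcal{D}_{i},\mathcal{D}_{j}$ couples so that samples disagree with probability at most $2Lr$, and choosing $K=CLrn+C'\log(k/\beta)$ controls $d_H$ with probability $1-\beta/k$ via Chernoff. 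Applying the integrated inequality with reference $\mathcal{D}_{1}$ and summing over the pairwise-disjoint accurate sets $S_{i}=\{\hat m:|\hat m-m_{i}|\le\alpha\}$ yields
\[
(1-\beta)k\;\le\;\sum_{i=1}^{k}\mathbb{P}_{\mathcal{D}_{i}}[A\in S_{i}]\;\le\;e^{\epsilon K}\sum_{i=1}^{k}\mathbb{P}_{\mathcal{D}_{1}}[A\in S_{i}]+\beta\;\le\;e^{\epsilon K}+\beta,
\]
which rearranges to $\epsilon K=\Omega(\log k)$ and hence $n=\Omega(\log(R/\alpha)/(\epsilon L r))$.

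\medskip
\noindent\emph{Anticipated main obstacle.} The delicate point throughout is the calibration of the truncation threshold $K$: it must be simultaneously small enough that $\epsilon K$ yields a useful logarithmic lower bound and large enough that $\mathbb{P}[d_H>K]$ is $o(\beta)$ (or $o(\beta/k)$ in the packing). Naively using worst-case group privacy with $d_H=n$ produces a vacuous $e^{\epsilon n}$ factor, so the argument inherently relies on the coupling being tightly concentrated; matching this concentration parameter against the packing size $k$ and the target confidence $\beta$ is precisely what delivers the sharp linear dependencies on $\log(1/\beta)$ and $1/\epsilon$ claimed in the theorem, and is the step most sensitive to the admissibility parameters.
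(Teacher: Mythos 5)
Your three-way decomposition, the mixture/packing constructions for the two privacy-dependent terms, and the coupling whose Hamming distance is $\mathrm{Binom}(n,O(L\alpha))$ resp.\ $\mathrm{Binom}(n,2Lr)$ are essentially the paper's route (Propositions \ref{propLB1}--\ref{propLB3}); the paper handles the coupling via the exact identity $\mathbb{E}[e^{-\epsilon d_H}]=\left(1-p(1-e^{-\epsilon})\right)^n$ rather than truncation, and proves the statistical term by reduction to Bernoulli parameter estimation rather than by KL. However, two of your steps do not go through as written. First, the pair $(\mathcal{D}_+,\mathcal{D}_-)$ for the statistical term cannot exist: if the two measures agree outside $[-\alpha,\alpha]$, they assign equal mass to $(-\infty,-\alpha]$ and to $[\alpha,\infty)$; the median of $\mathcal{D}_-$ being $-\alpha$ forces that common left mass to be at least $1/2$, and the median of $\mathcal{D}_+$ being $+\alpha$ then forces $\mathcal{D}_+\left((-\alpha,\alpha)\right)=0$, contradicting the density lower bound $L$ that admissibility imposes on a sub-interval of $(-\alpha,\alpha)$ (recall $\alpha<r$). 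The correct two-point pair must spread the $\Theta(L\alpha)$ of transported mass over an interval of length $\Theta(1/L)$ (admissible since $2r\le 1/L$); that is exactly what makes the KL come out to $\Theta(L^2\alpha^2)$, and it is morally the same object as the paper's Bernoulli construction.

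Second, the calibration of $K$. With $K=CL\alpha n+C'\log(1/\beta)$ (resp.\ $K=CLrn+C'\log(k/\beta)$), the conclusion ``$\epsilon K=\Omega(\log(1/\beta))$, hence $n=\Omega(\log(1/\beta)/(\epsilon L\alpha))$'' does not follow: the additive part already contributes $\epsilon C'\log(1/\beta)$ (resp.\ $\epsilon C'\log(k/\beta)$) to $\epsilon K$, which matches the right-hand side whenever $\epsilon$ exceeds the absolute constant $c/C'$, and then the inequality constrains $n$ not at all. For the two-point term this regime is rescued because $\log(1/\beta)/(\epsilon L\alpha)\le\log(1/\beta)/(L^2\alpha^2)$ when $\epsilon\ge L\alpha$, but the packing term $\log(R/\alpha+1)/(\epsilon Lr)$ is not dominated by the other two, so your proof of the third term genuinely fails for, say, $\epsilon=1/2$. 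The repair is to demand far less of the coupling tail: the $k$ tail terms are summed against a left-hand side of $(1-\beta)k$, so $\mathbb{P}[d_H>K]\le(1-\beta)/2$ per pair suffices, and Markov's inequality already gives this with $K=O(Lrn)$ and no additive logarithm; the paper avoids the issue altogether by computing the binomial moment generating function exactly.
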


Our results can also be applied for location estimation in the case of a Gaussian distribution $N(\mu,\sigma^2)$ with known variance $\sigma^2$. In this case, the median is equal to the mean and we recover \emph{the exact tight bounds} obtained in recent work of \cite{Kamath0SU19,KarwaV18}. Thus, despite our minimal distributional assumptions, the trade-off between samples, accuracy and privacy that we obtain is the same as the simpler parametric setting of Gaussian distributions. Finally, one of the two estimators constructed by Avella-Medina and Brunel \cite{Subgaussian,ptr20} are leveraging the notion of smooth sensitivity to create the private median estimator. Interestingly, some recent work by Asi and Duchi \cite[Lemma 5.2.]{Duch20near} proved that \textit{any successful $\epsilon$-private median estimator} which is adding noise based on the smooth sensitivity of the median function requires the use of $n=\Omega\left( \frac{1}{\alpha \epsilon^2}\right)$ samples. Hence, the result of Asi and Duchi combined with our result that in some regime the correct scaling for the sample complexity is $\tilde{\Theta}(\frac{1}{\epsilon \alpha}),$ allows us to conclude the \textit{suboptimality} of the smooth sensitivity method in the private median estimation task. A similar notion of suboptimality of smooth sensitivity for median estimation has been establish by Asi and Duchi themselves in the slightly different context of instance-optimality \cite{Duch20near}.

\subsection{Our approach and techniques}\label{intro:tech}
To obtain our main result we devise a more general principled framework that can be applied to many other problems in private statistical estimation.
Our framework splits the design problem in two steps, and is inspired by earlier work on learning \ER graphs and, their generalization, graphons \cite{borgs2018private,BorgsCSZ18}:
\begin{enumerate}
    \item We obtain private algorithms for a subset of the domain. We focus our attention to a restricted class of instances that are ``typical'' with respect to the distribution. We define a set of constraints that samples should satisfy with high probability and focus only on instances that meet those constraints. In this restricted setting, we obtain an estimator that is accurate and private for these instances only.
    \item We extend our estimator to all instances and guarantee privacy globally. We want to do this while preserving the output of our estimator in the set of typical instances. 
\end{enumerate}

In the {\bf first step}, we make careful use of the well-studied Laplace mechanism, that is a common method for a turning a non-private algorithm to a private one.
It works by adding Laplace noise to the deterministic final output of the algorithm in order to ensure privacy. The amount of noise that needs to be added crucially depends on the sensitivity of the algorithm to changes in the input. 

For the task of computing the median of $n$ points, the sensitivity can be quite high as even if all points are bounded in $[-R,R]$ there are instances where changing a single input may change the median by $R$\footnote{For example if $\lceil n/2 \rceil$ points are located at $-R$ and $\lfloor n/2 \rfloor$ points are located in $R$ changing a single point can move the median from $-R$ to $R$.}. 

While a worst case instance might have large sensitivity, as argued above, one expects that, in a ``typical instance'' drawn randomly from a concentrated distribution, the median does not change so drastically when few of the points move. Indeed, we quantify the notion of a typical instance as the family of instances that have many points in various distances from the sample median and show that for such ``typical" instance, even changing a small fraction of input points cannot move the median of the sample dramatically.

To obtain our main result, we exploit this observation and initially focus only on typical instances of the distribution. For those instances, we show that a variant of the Laplace mechanism, which we call \textit{``flattened'' Laplace mechanism} ensures that differential privacy is guaranteed when focusing only on typical instances. Importantly, while working on these instances this simple variant suffices for obtaining high probability accuracy guarantees, a more delicate approach is necessary to ensure privacy guarantees, which should work even under worst case instances. 

In the {\bf second step}, we aim to define the output distribution of our estimator in the atypical instances to ensure privacy globally. To achieve this, we employ a method for Lipschitz extensions that has been developed in prior work for estimating parameters of \ER graphs, called the ``Extension Lemma'' (Check Proposition 2.1, \cite{borgs2018private}) The ``Extension Lemma'' shows that it is always possible to extend a private algorithm from a smaller domain $A$ to a larger domain $B$ without changing the output distribution for instances in $A$. It achieves this by explicitly defining what the output distribution should be for instances in $B\setminus A$ and only worsens the privacy guarantee by a factor of 2. This allows to obtain worst-case private estimators that are guaranteed to produce accurate estimates with high probability over the instances drawn from the distribution. 

Finally, an important technical contribution of our work is showing that the extension can be computed in polynomial time. We do this by characterizing the structure of the resulting extended output distributions showing that they are piece-wise ``constant'' or ``exponential'' with $\operatorname{poly}(n)$ number of pieces. We can identify all these pieces through a simple greedy algorithm and sample exactly from the resulting distribution. This provides the first instance of a natural problem for which the extension given by the Extension Lemma \cite{borgs2018private} can be computed in polynomial time making progress on a direction suggested by the authors of \cite{borgs2018private}.

\subsection{Further Related Work}
Lipschitz-extensions is a popular technique for designing private algorithms that was used for example in \cite{BlockiBDS13,KasiviswanathanNRS13}. A common theme in those methods is that they develop a Lipschitz estimator for \textcolor{black}{a quantity of interest in} a small domain that is extended to be Lipschitz throughout the whole domain and can then be made private through the Laplace mechanism. Developing a Lipschitz median estimator for our setting would lead to suboptimal rates that scale linearly with the range of possible values $R$\footnote{This is because the Lipschitz-constant must be at least $R/n$ as changing all the input should produce any possible median.}. In contrast, our framework \textcolor{black}{does not simply add noise to a Lipschitz extended estimator but} constructs directly a private mechanism for the small ``typical'' domain and extends it to 
the whole domain via the more general Extension Lemma that has been developed in recent work \cite{borgs2018private,BorgsCSZ18}.

The framework of Lipschitz-extensions was also recently used for estimation of the median along with other statistics such as the variance and the trimmed mean \cite{CummingsD20}. Their paper focuses on arbitrary data-sets without statistical assumptions. Applied to our setting, their methods would yield sub-optimal sample complexity, scaling linearly with the range possible values $R$, again as opposed to the optimal logarithmic dependence we obtain in our work. Median estimation has also been studied in \cite{DworkL09}, who gave an $(\epsilon,\delta)$-differentially private mechanism that is consistent in the limit but did not provide explicit non-asymptotic rates.

\section{Preliminaries: The Extension Lemma}
\label{section:Main-Preliminaries}

In this section for the reader's convenience, we present briefly a main tool behind of our approach, which we refer to from now on as the Extension Lemma. The Extension Lemma is originally stated and proved in \cite{BorgsCSZ18, borgs2018private}. 

Using the notation of Definition \ref{definition:epsilon-privacy} let us consider an arbitrary $\epsilon$-differentially private algorithm defined on input belonging in some set $\mathcal{H} \subseteq \mathbb{R}^n$. Then the Extension Lemma guarantees that the algorithm \emph{can be always extended} to a $2\epsilon$-differentially private algorithm defined for arbitrary input data from $\mathcal{M}$ with the property that if the input data belongs in $\mathcal{H}$, the distribution of output values is the same with the original algorithm. The result in \cite{borgs2018private} is generic, in the sense of applying to any input metric space, but here we present it for simplicity only when the input space if the input space is $\mathbb{R}^n$ and is equipped with the Hamming distance $d_H.$ Formally the result is as following,

\begin{proposition}["The Extension Lemma'' Proposition 2.1, \cite{borgs2018private}] \label{extension} 
Let $\hat{\mathcal{A}}$ be an $\epsilon$-differentially private algorithm designed for input from $\mathcal{H} \subseteq \mathbb{R}^n$ with arbitrary output measure space $(\Omega,\mathcal{F})$. Then there exists a randomized algorithm $\mathcal{A}$ defined on the whole input space $\mathbb{R}^n$ with the same output space which is $2\epsilon$-differentially private and satisfies that for every $X \in \mathcal{H}$, $\mathcal{A}(X) \overset{d}{=}  \hat{\mathcal{A}}(X)$.
\end{proposition}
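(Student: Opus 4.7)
My plan is to extend $\hat{\mathcal{A}}$ by constructing, for every $X \in \mathbb{R}^n$, an output probability measure $\mathcal{A}(X)$ on $(\Omega, \mathcal{F})$ that (a) coincides with $\hat{\mathcal{A}}(X)$ whenever $X \in \mathcal{H}$, and (b) makes the global map $X \mapsto \mathcal{A}(X)$ satisfy $2\epsilon$-differential privacy. The natural idea is to treat the family $\{\hat{\mathcal{A}}(Y)\}_{Y \in \mathcal{H}}$ as a collection of target distributions and, for each $X$, take the pointwise infimum of their densities after tilting by the exponential weight $e^{\epsilon d_H(X, Y)}$, and then to renormalize. The factor $2$ in the privacy guarantee will drop out as the price paid by this normalization.

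\textbf{Construction and consistency on $\mathcal{H}$.} Let $\lambda$ be a common $\sigma$-finite dominating measure for $\{\hat{\mathcal{A}}(Y)\}_{Y \in \mathcal{H}}$ (obtained via a standard countable-mixture reduction), and set $p_Y := \mathrm{d}\hat{\mathcal{A}}(Y)/\mathrm{d}\lambda$. For each $X \in \mathbb{R}^n$ define
$$q_X(\omega) := \inf_{Y \in \mathcal{H}} e^{\epsilon d_H(X, Y)} p_Y(\omega), \qquad \nu_X(S) := \int_S q_X\, \mathrm{d}\lambda, \qquad \mathcal{A}(X) := \nu_X / \nu_X(\Omega).$$
For $X \in \mathcal{H}$, plugging $Y = X$ into the infimum shows $q_X \le p_X$ $\lambda$-a.e., while the $\epsilon$-DP of $\hat{\mathcal{A}}$ on $\mathcal{H}$ implies the density inequality $e^{\epsilon d_H(X, Y)} p_Y \ge p_X$ $\lambda$-a.e.\ for every $Y \in \mathcal{H}$; hence $q_X = p_X$ $\lambda$-a.e., so $\nu_X = \hat{\mathcal{A}}(X)$ is already a probability measure and $\mathcal{A}(X) = \hat{\mathcal{A}}(X)$ as required.

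\textbf{Verifying $2\epsilon$-DP.} For arbitrary $X_1, X_2 \in \mathbb{R}^n$ and $S \in \mathcal{F}$, fix $\omega \in \Omega$ and let $Y^\star$ approximately realize the infimum defining $q_{X_2}(\omega)$. The triangle inequality $d_H(X_1, Y^\star) \le d_H(X_1, X_2) + d_H(X_2, Y^\star)$ implies
$$q_{X_1}(\omega) \le e^{\epsilon d_H(X_1, Y^\star)} p_{Y^\star}(\omega) \le e^{\epsilon d_H(X_1, X_2)} q_{X_2}(\omega).$$
Integrating this pointwise bound over $S$ and (by symmetry in the roles of $X_1$ and $X_2$) over $\Omega$ gives
$$\nu_{X_1}(S) \le e^{\epsilon d_H(X_1, X_2)} \nu_{X_2}(S), \qquad \nu_{X_2}(\Omega) \le e^{\epsilon d_H(X_1, X_2)} \nu_{X_1}(\Omega),$$
so that
$$\frac{\mathcal{A}(X_1)(S)}{\mathcal{A}(X_2)(S)} = \frac{\nu_{X_1}(S)}{\nu_{X_2}(S)} \cdot \frac{\nu_{X_2}(\Omega)}{\nu_{X_1}(\Omega)} \le e^{2\epsilon d_H(X_1, X_2)},$$
which is the desired $2\epsilon$-DP bound. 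The factor of $2$ appears naturally from applying the triangle inequality twice: once to the infimum densities on $S$, once to the normalizing total masses.

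\textbf{Main obstacle.} The main technical obstacle I anticipate is the measurability of $q_X$ when $\mathcal{H}$ is uncountable, since a pointwise infimum over an uncountable family is not automatically Borel. I would address this via standard measure-theoretic tools: either passing to an essential infimum, or invoking a measurable-selection theorem (e.g., Kuratowski-Ryll-Nardzewski) to obtain a Borel map $\omega \mapsto Y^\star(X, \omega)$ approximately achieving the infimum. Once this detail is handled, the construction and the privacy calculation above go through verbatim and establish the claim.
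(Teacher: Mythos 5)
Your construction — take the pointwise infimum of the exponentially tilted densities $e^{\epsilon d_H(X,Y)}p_Y$ and renormalize — together with your two-step triangle-inequality argument (once for the unnormalized density bound, once for the ratio of normalizing constants) is exactly the construction the paper records for the extended algorithm and the argument it uses to prove Proposition~\ref{prop_replica} in its appendix; the paper's exponents carry an extra factor $1/2$ only because its restricted algorithm is $\epsilon/2$-DP. Two small bookkeeping points that the paper (in its concrete setting) handles via Lemma~\ref{well_posed} but which you would still need in the general statement: beyond measurability of the infimum, you must check $0<\nu_X(\Omega)<\infty$ so that $\mathcal{A}(X)$ is a genuine probability measure (both bounds follow by fixing one $Y_0\in\mathcal{H}$, using $d_H\le n$ and the $\epsilon$-DP of $\hat{\mathcal{A}}$, modulo the same a.e.-uniformity-in-$Y$ issue you already flag); the ``essential infimum over a countable dense subfamily'' device you mention handles both this and the measurability obstacle simultaneously.
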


\section{A Rate-Optimal Estimator}
\label{section:Main-Statistical}

In this section, we present an optimal $\epsilon$-differentially private algorithm for median estimation. We defer the proofs of the stated results to the appendix.  We start with the model of estimation.

\subsection{The Model of Estimation.}
In our statistical model, for some $n \in \mathbb{N}$ one is given $n$ independent identically distributed (i.i.d.) samples $X_1,\cdots,X_n$ from a distribution $\mathcal{D}$. We make the minimal assumption that the distribution $\mathcal{D}$ is \emph{admissible} per Assumption \ref{assumption:median-mass} for some fixed, but arbitrary, parameters $L,R,r>0$ satisfying $Lr \leq \frac{1}{2}.$

We are interested in  estimating the median of $\mathcal{D}$, denoted by $m\left(\mathcal{D} \right),$ from the $n$ samples using an $\epsilon$-differentially private algorithm. Importantly, we assume that the algorithm designer has access to the exact values of the parameters $L,r,R>0$ for which the distribution $\mathcal{D}$ satisfies the admissibility assumptions. Our main interest is to study the following minimax rate:
\begin{equation}
   \mathcal{R}\left(n,\epsilon,L,R,r\right)\left(\alpha\right):= \min_{\mathcal{A} \text{ is } \epsilon-\mathrm{D.P.} }  \max_{(L,R,r)-\mathcal{D} \text{ admissible}} \mathbb{P}_{x_1,\ldots,x_n \sim^{\text{i.i.d.}} \mathcal{D}}\left[ |\mathcal{A}\left(x_1,x_2,\ldots,x_n\right)-m\left(\mathcal{D}\right)| \geq \alpha \right],
\end{equation}where in the above $\min$ we use the abbreviation D.P. for differential privacy and $(L,R,r)-\mathcal{D} \text{ admissible}$ for being admissible with parameters $L,R,r$. Our focus is on \emph{the sample complexity} defined for any $\alpha \in (0,r), \beta \in (0,1)$ and $R,L,r$ with $Lr \leq \frac{1}{2}$ as following.
\begin{equation}\label{eq:sc}
    n_{\mathrm{sc}}\left(\alpha, \beta,R,L,r,\epsilon\right):=\inf\{n \in \mathbb{N}: \mathcal{R}\left(n,\epsilon,L,R,r\right)\left(\alpha\right) \leq \beta\}. 
\end{equation}In words, we want to understand the minimum number of samples that are required for an $\epsilon$-differentially private estimator to estimate the median with accuracy $\alpha$ with probability $1-\beta.$
\subsection{The Optimal Sample Complexity}
We offer a tight, up to absolute constants, characterization of the sample complexity.
Our results follow by proposing and analyzing the performance of an $\epsilon$-differentially private algorithm and also proving the corresponding minimax lower bound.
\begin{theorem}\label{thm:rate}
Assume $\epsilon \in (0,1)$. Then for any $L,R,r>0$ with $Lr \leq \frac{1}{2}$ and $\alpha \in (0,\min\{R,r\}),$ $\beta \in (0,\frac{1}{2})$ it holds
\begin{equation*}
n_{\mathrm{sc}}\left(\alpha, \beta,R,L,r,\epsilon\right) =\Theta\left(\frac{\log(\frac{1}{\beta})}{L^2\alpha^2} +\frac{ \log \left(\frac{1}{\beta}\right)}{\epsilon L\alpha }+\frac{\log \left(\frac{R}{ \alpha}+1\right)}{\epsilon Lr}\right).
 \end{equation*} 
\end{theorem}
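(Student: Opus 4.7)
I will match the upper and lower bounds, term by term, on the three summands of $n_{\mathrm{sc}}$: the non-private statistical term $\log(1/\beta)/(L^2\alpha^2)$ is the cost of empirical-median concentration; the fine-scale private term $\log(1/\beta)/(\epsilon L\alpha)$ is the cost of hiding one sample while resolving the median to accuracy $\alpha$ inside the positive-density window; and the coarse-scale private term $\log(R/\alpha+1)/(\epsilon Lr)$ is the cost of first localising the median inside the prior range $[-R,R]$, where only the density $L$ on an interval of length $r$ is available as signal.

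\textbf{Upper bound via the template of Section~\ref{intro:tech}.} Define the set $\mathcal{H}\subset\mathbb{R}^n$ of \emph{typical} samples to be those whose empirical CDF $F_n$ satisfies $|F_n(t)-F(t)|\ll L\alpha$ throughout $[m(\mathcal{D})-r,\,m(\mathcal{D})+r]$; by Dvoretzky--Kiefer--Wolfowitz, $\Prob[X\in\mathcal{H}]\ge 1-\beta/2$ as soon as $n\gtrsim \log(1/\beta)/(L^2\alpha^2)$, which pays for the first summand. On $\mathcal{H}$, I run the ``flattened'' Laplace/exponential mechanism $\hat{\mathcal{A}}$ that outputs $\hat m\in[-R,R]$ with density proportional to $\exp(-\tfrac{\epsilon}{2}\, n\, q(X,\hat m))$, for the saturating score $q(X,\hat m) := L\cdot\min(|\hat m - \hat m_{\mathrm{emp}}(X)|,\, r)$. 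On typical $X$ the empirical median has sensitivity $O(1/(nL))$ under one-coordinate changes, so $nq$ has Hamming-sensitivity $O(1)$ and $\hat{\mathcal{A}}$ is $\epsilon$-DP on $\mathcal{H}$. The exponential-mechanism ratio yields
\[
\Prob\bigl[|\hat m - m(\mathcal{D})| > \alpha\bigr] \;\lesssim\; \tfrac{1}{\epsilon n L \alpha}\, e^{-\epsilon n L \alpha/2} \;+\; \tfrac{R}{\alpha}\, e^{-\epsilon n L r /2},
\]
which is $\le \beta/2$ exactly when $n \gtrsim \log(1/\beta)/(\epsilon L\alpha) + \log(R/\alpha+1)/(\epsilon L r)$, delivering the remaining two summands. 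I then apply Proposition~\ref{extension} to $\hat{\mathcal{A}}$ at privacy level $\epsilon/2$ to obtain a $\epsilon$-DP algorithm on all of $\mathbb{R}^n$ coinciding in distribution with $\hat{\mathcal{A}}$ on $\mathcal{H}$. Polynomial-time sampling uses the piecewise-linear structure of $q$: after sorting $X$, the output density is piecewise exponential with $O(n)$ pieces, and masses and inverse-CDF can be computed exactly.

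\textbf{Lower bound, term by term.} (i) The statistical term follows from a Le Cam two-point bound between admissible densities uniform on $[-1/(2L),1/(2L)]$ translated to medians $\pm\alpha$; their per-sample KL is $O(L^2\alpha^2)$ and Pinsker combined with error probability $\beta$ gives $n=\Omega(\log(1/\beta)/(L^2\alpha^2))$. (ii) The fine private term uses the same pair together with the group-privacy form built into Definition~\ref{definition:epsilon-privacy}: optimally couple $\mathcal{D}_0^{\otimes n}$ and $\mathcal{D}_1^{\otimes n}$ with expected Hamming distance $O(n L \alpha)$; the separating event $S=\{\hat m\le 0\}$ satisfies $\Prob_0[S]\le e^{\epsilon\cdot O(nL\alpha)}\Prob_1[S]$ up to the coupling-failure mass, and combined with $\Prob_0[S]\ge 1-\beta$, $\Prob_1[S]\le\beta$ this forces $n=\Omega(\log(1/\beta)/(\epsilon L\alpha))$. (iii) For the coarse term, tile $[-R,R]$ with $K=\Theta(R/\alpha)$ admissible distributions whose medians are pairwise $2\alpha$-separated but each pair is couplable with expected Hamming distance $O(nLr)$; the standard packing inequality $\sum_k \Prob_k[\hat m\text{ correct}] \le e^{\epsilon n L r}$ contradicts $\sum_k \Prob_k[\hat m\text{ correct}]\ge K(1-\beta)$ unless $n=\Omega(\log(R/\alpha)/(\epsilon L r))$.

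\textbf{Main obstacle.} The delicate step is the joint calibration of the score $q$ on typical instances: its saturation at $Lr$ beyond the density window is exactly what turns the far-tail integral into $\log(R/\alpha)$ scaled by $1/(Lr)$ rather than $1/(L\alpha)$, while simultaneously the score must have Hamming-sensitivity $O(1/n)$ on \emph{every} $X\in\mathcal{H}$, which in turn dictates the precise empirical-CDF constraints that define $\mathcal{H}$. Once this balance is right, the Extension Lemma cleanly converts the per-instance privacy guarantee into a worst-case $\epsilon$-DP algorithm, and the piecewise-exponential shape of the output density makes the extended mechanism polynomial-time samplable.
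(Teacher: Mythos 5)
Your overall plan---typical set plus the Extension Lemma for the upper bound, three separate term-by-term lower bounds---is exactly the paper's architecture, and your lower-bound sketches (i)--(iii) match Propositions~\ref{propLB1}--\ref{propLB3} in spirit (the paper uses a Bernoulli-learning reduction rather than Le Cam/Pinsker for term (i), but the other two use the same coupling/packing arguments you describe). However, the upper bound has a genuine gap in the centerpiece claim: the Hamming-sensitivity of the empirical median on your typical set.

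First, a framing issue: you define $\mathcal{H}$ as $\{X : |F_n(t)-F(t)| \ll L\alpha \text{ on } [m(\mathcal{D})-r, m(\mathcal{D})+r]\}$, but this references the unknown $F$ and $m(\mathcal{D})$. The Extension Lemma requires a single fixed $\mathcal{H}\subseteq\mathbb{R}^n$, and the algorithm it produces depends on $\mathcal{H}$; a per-distribution $\mathcal{H}$ gives a per-distribution algorithm. The paper's $\mathcal{H}$ in \eqref{equation:sensitivity-set-median} is defined purely from $X$, the known parameters $(L,R,r,C,n)$, and the \emph{empirical} median $m(X)$.

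Second, and more fundamentally, the sensitivity claim does not follow from a DKW-type constraint. Suppose $X,Y$ both satisfy $\sup_t|F_n(t)-F(t)|\le\eta$ with $\eta\sim L\alpha$ and $d_H(X,Y)=1$. Since $F$ has density at least $L$ near the median, this only forces $|m(X)-m(Y)|=O(\eta/L + 1/(nL)) = O(\alpha)$: the empirical CDF is a step function, and a DKW band of width $\eta$ permits a gap of length $\eta/L \approx \alpha$ with no sample points near the empirical median. So the sensitivity of $m(\cdot)$ on your $\mathcal{H}$ is $O(\alpha)$, not $O(1/(nL))$; your normalized score $nq(X,\hat m)$ then has Hamming-sensitivity $O(nL\alpha)\gg 1$ in the relevant regime, and the exponential mechanism with exponent $-\tfrac{\epsilon}{2}nq$ is not $\epsilon$-DP on $\mathcal{H}$. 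This is precisely why the paper's $\mathcal{H}$ encodes \emph{multi-scale counting constraints around $m(X)$}: at least $\kappa+1$ samples within $\kappa C/(Ln)$ of $m(X)$ on each side for every $\kappa\le Lnr/(2C)$. This forces the sample to be locally dense at the scale $C/(Ln)$ and is what yields the $|m(X)-m(Y)|\le \tfrac{3C}{Ln}d_H(X,Y)$ bound of Lemma~\ref{lemma:sensitivity-median}. A corollary of this difference: the paper cannot (and does not) show $X\in\mathcal{H}$ w.h.p.---the $\kappa=1$ constraint can easily fail---so it instead proves (Lemma~\ref{lemma:typical-set-median}) that $X$ is w.h.p.\ within Hamming distance $O(\log(1/\beta))$ of some $X'\in\mathcal{H}$ with the same median, and uses group privacy to transfer accuracy. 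Your accuracy bookkeeping would need the analogous move once $\mathcal{H}$ is fixed correctly.

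A smaller point: your claimed $O(n)$-piece piecewise-exponential structure and exact inverse-CDF sampling apply to the \emph{restricted} mechanism $\hat{\mathcal{A}}$, but the extended density in \eqref{alg:general} involves $\inf_{X'\in\mathcal{H}}[\ldots]$, and establishing its piecewise structure with $\operatorname{poly}(n)$ pieces (the paper gets $O(n^2)$) is the nontrivial part of the polynomial-time implementation; see Lemmas~\ref{definition:TypicalHamming} and~\ref{lemma:algo:concatenation-laplacian}.
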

 
\subsection{The Optimal Algorithm}\label{sec:optimalalg}We describe here the construction of the optimal $\epsilon$-differentially private algorithm. We use the Extension Lemma and follow the path described in items 1,2 in Section \ref{intro:tech}. Our high-level approach is to first define a a ``typical'' subset of the input space and a restricted differentially private algorithm defined only on the ``typical'' subset, and then use the Extension Lemma (\Cref{extensionApp}) to extend it to a private algorithm on the whole input space. All stated results are proven in the Appendix.

\paragraph{The typical set}We first define a ``typical'' subset $\mathcal{H} \subseteq \mathbb{R}^n$ of the input space. Let $C>1$ be a constant we are going to choose sufficiently large. We define \begin{equation} \label{equation:sensitivity-set-median}
    \mathcal{H}=\mathcal{H}_C=\left\{ X \in \mathbb{R}^n: 
\begin{cases}
\sum_{i\in [n]}\mathbf{1}\{X_i-m(X)\in[0,\frac{\kappa C }{Ln}]\}\ge \kappa +1\\
\sum_{i\in [n]}\mathbf{1}\{m(X)-X_i\in[0,\frac{\kappa C }{Ln}]\}\ge \kappa +1\\
\kappa\in\{1,\cdots,\frac{Lnr}{2C}\}\\
m(X) \in [-R-r/2,R+r/2]
\end{cases}  \right\},
\end{equation} where by $m(X)$ we denote the left empirical median of the set $(X_1,\cdots,X_n)$ (see Definition \ref{definition:median}). In words, the first two families of constraints assume that there are sufficiently many samples falling sufficiently close to the empirical median $m(X)$, at distances which are multiples of $C/Ln$. The motivation for this choice comes from the fact that each interval $[0,\kappa C/Ln]$ is assigned from any admissible distribution $\mathcal{D}$ at least $C \kappa /n$ probability mass based on the Assumption \ref{assumption:median-mass}. Hence, in expectation, it contains at least $C\kappa>\kappa $ out of the $n$ samples. The last constraint of $\mathcal{H}$ assumes that the $m(X)$ falls sufficiently close to the interval the population median $m\left(\mathcal{D}\right)$ is assumed to belong to, based on the Assumption \ref{assumption:median-mass}. 

\paragraph{The restricted algorithm} We now define the algorithm on inputs from $\mathcal{H}$ as a randomised algorithm with density  given by \begin{equation} \label{restricted} f_{\hat{\mathcal{A}}\left(X\right)} (\omega)=\frac{1}{\hat{Z}} \exp\left(-\frac{\epsilon}{4} \min\left\{ \mystar  \left|m(X)-\omega\right|,\mysquare \right\} \right), \omega \in \mathcal{I}:=\myregion \end{equation}
where the normalizing constant is \begin{equation} \label{norm_restr} \hat{Z}=\int_{\mathcal{I}} \exp\left(-\frac{\epsilon}{4} \min\left\{  \mystar \left|m(X)-\omega\right|,\mysquare \right\} \right) \mathrm{d}\omega.\end{equation} We call this distribution a ``flattened'' Laplacian mechanism (see \Cref{flatLap}).
\begin{figure}[t]
    \centering
\begin{tikzpicture}[scale=0.8]
\def\truncatedlaplace{\x,{5/exp(min(abs(\x-3),2.5)/2)}}
\def\xmax{6}
\def\xcen{3}
\def\xmin{-6}
\def\fmax{5/exp(min(abs(\xmax-3),2.5)/2)}
\def\fmin{5/exp(min(abs(\xmin-3),2.5)/2)}
\def\fcen{5/exp(min(abs(\xcen-3),2.5)/2)}
\fill [fill=blue!60,opacity=0.4] (\xmin,0) -- plot[domain=\xmin:\xmax] (\truncatedlaplace) -- ({\xmax},0) -- cycle;
\draw[thick,color=blue,domain=\xmin:\xmax] plot (\truncatedlaplace) node[right] {};
\draw[dashed] ({\xmax},{\fmax}) -- ({\xmax},0) node[below] {$ $};
\draw[dashed] ({\xmin},{\fmin}) -- ({\xmin},0) node[below] {$ $};
\draw[dashed] ({\xcen},{\fcen}) -- ({\xcen},0) node[below] {$m(X)$};
\draw[->] (0,0) -- (\xmax+1,0) node[right] {};
\draw[->] (0,0) -- (\xmin-1,0) node[left] {};
\draw[->] (0,0) -- (0,5) node[above] {};
\end{tikzpicture}
\vspace{-0.45cm}
\caption{``The flattened''-Laplacian Mechanism}
\label{flatLap}
\end{figure}
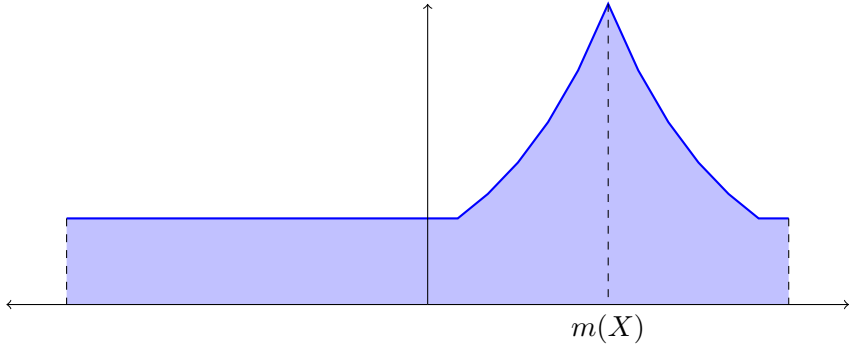
Note that the normalizing constant $\hat{Z}$ does not need to be indexed by the $n$-tuple $X$ as it can be easily proven that for $C>1/2$ the integral on the left hand side of \eqref{norm_restr} gives the same value for all $X \in \mathcal{H}$. Furthermore, observe that to implement the restricted algorithm given as input a data-set $X \in \mathcal{H}$, we first compute the left empirical median $m(X)$ and then sample from the ``flattened'' Laplacian distribution, one part of which is a uniform distibution and the other one is a truncated Laplacian distribution. The exact sampling details from the distribution are defered to Section \ref{sec:algorithmic}.

The motivation to select this distribution is to ensure that the algorithm $\hat{\mathcal{A}}$ is $\epsilon/2$-differentially private. The reason is that the median $m(X)$ as a function from $(\mathcal{H},d_H)$ to the reals, is unavoidably highly sensitive (e.g. it can move distance $\Omega\left(R\right)$ with $ n/2+1$ changes) and therefore by simply adding Laplace noise, as customary in the design of private algorithms, would not suffice to obtain optimal rates. Nevertheless we show that it satisfies the following ``approximate'' Lipschitz constraint with a ``Lipschitz'' constant which is independent of $R$.

\begin{lemma}\label{lemma:sensitivity-median}
Suppose $X, Y \in \mathcal{H}$ with Hamming distance $d_H\left(X,Y\right) \leq Lnr$. Then $$  \left|m(X)-m(Y)\right| \leq  \frac{ 3C}{Ln} d_H\left(X,Y\right).$$
\end{lemma}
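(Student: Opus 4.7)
The plan is a direct counting argument at a carefully chosen threshold. Without loss of generality assume $m(Y) \ge m(X)$; the opposite direction follows symmetrically from the left-side constraint in $\mathcal{H}$. Write $d := d_H(X,Y)$ and set $t := m(X) + \frac{3Cd}{Ln}$. The goal will be to show that $|\{i : Y_i \le t\}| \ge \lceil n/2 \rceil$: by the definition of the left empirical median this yields $m(Y) \le t$, i.e.\ the claimed inequality $m(Y) - m(X) \le \frac{3Cd}{Ln}$.

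The first step is to lower bound $|\{i : X_i \le t\}|$ using two inputs. The left-median property of $X$ gives $|\{i : X_i \le m(X)\}| \ge \lceil n/2 \rceil$. The right-side constraint in the definition of $\mathcal{H}$, applied with the specific choice $\kappa = 3d$, gives $|\{i : X_i - m(X) \in [0, \tfrac{3Cd}{Ln}]\}| \ge 3d+1$. Setting $b := |\{i : X_i = m(X)\}|$ for the overlap between the two counted sets, I combine them to get
\begin{equation*}
|\{i : X_i \le t\}| \;\ge\; \lceil n/2 \rceil + (3d+1) - b.
\end{equation*}
The choice $\kappa = 3d$ is legal in the definition of $\mathcal{H}$ provided $3d \le \frac{Lnr}{2C}$; the remaining regime $d > \frac{Lnr}{6C}$ is handled by invoking the $\mathcal{H}$-constraint at the largest allowed value of $\kappa$ and using the global bound $m(X), m(Y) \in [-R - r/2, R+r/2]$ from the last line of $\mathcal{H}$.

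The second step is to transfer the count from $X$ to $Y$. Since $X$ and $Y$ agree in at least $n - d$ coordinates, at most $d$ indices with $X_i \le t$ can fail to have $Y_i \le t$, so
\begin{equation*}
|\{i : Y_i \le t\}| \;\ge\; |\{i : X_i \le t\}| - d \;\ge\; \lceil n/2 \rceil + 2d + 1 - b.
\end{equation*}
When $b \le 2d+1$ this is at least $\lceil n/2 \rceil$ and we are done. When instead $b > 2d+1$, the argument is closed by observing that at least $b - d > d+1$ coordinates still satisfy $Y_i = m(X) \le t$, which combined with the indices $i \notin \{j : X_j \ne Y_j\}$ having $X_i < m(X)$ again delivers the same lower bound; the factor $3$ in $\frac{3C}{Ln}$ is chosen precisely to afford the slack (via $\kappa = 3d$) needed to absorb the ``$+1$'' offsets in the $\mathcal{H}$-conditions and the overlap at $X_i = m(X)$.

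The main obstacle, as just indicated, is twofold: the boundary regime in which $3d$ exceeds the allowed range of $\kappa$, and the delicate tie-handling at $m(X)$. Both are addressed by the constant $3$ (rather than $1$ or $2$) appearing in the Lipschitz constant; I expect this to be the technically careful part of the argument, the rest reducing to the clean counting sketched above.
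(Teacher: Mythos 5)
You take a genuinely different route from the paper. The paper's proof pivots through the median $C^{*}=C_{\lfloor(n-\kappa)/2\rfloor}$ of the shared coordinates $\{C_1,\dots,C_{n-\kappa}\}$ (with $\kappa=d_H(X,Y)$): it argues that at most $\lfloor\kappa/2\rfloor+1$ data-points of $X$ (resp.\ $Y$) sit between $m(X)$ (resp.\ $m(Y)$) and $C^{*}$, converts this positional bound into $|m(X)-C^{*}|\le(\lfloor\kappa/2\rfloor+1)\frac{C}{Ln}$ via the $\mathcal{H}$-constraint at the corresponding index, and finishes by the triangle inequality. You skip the pivot and count directly at $m(X)$, which forces you to invoke the $\mathcal{H}$-constraint at the much larger index $3d$ rather than $\approx d/2$; since those constraints only run up to $\lfloor Lnr/(2C)\rfloor$, your argument exhausts them roughly six times earlier than the paper's.

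Beyond this structural difference, two of your steps do not close. \emph{(i)} The case $b>2d+1$: writing $D=\{j:X_j\neq Y_j\}$, the counting you describe only yields $|\{i:Y_i\le t\}|\ge|\{i:X_i\le m(X)\}\setminus D|\ge\lfloor n/2\rfloor-d$, since the $d$ perturbations are shared between $\{i:X_i=m(X)\}$ and $\{i:X_i<m(X)\}$ and cannot be subtracted from each set independently; this is $d$ short of the target. The gap is not an artifact of loose bookkeeping: take $n=40$, $L=1$, $r=1/2$, $C=10$, $R=10$, let $X$ be the $40$-tuple with twenty entries at $0$ and twenty at $5$, and let $Y$ overwrite three of the zeros with $100$. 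Then $X,Y\in\mathcal{H}$, $d_H(X,Y)=3$, $b=20>2d+1$, $m(X)=0$, $m(Y)=5$, yet $\frac{3C}{Ln}d_H(X,Y)=2.25<5$. In this regime the membership conditions of $\mathcal{H}$ genuinely do not supply enough structure, and no counting of the kind you propose (nor any other) over those conditions alone can succeed. \emph{(ii)} Your fallback for $3d>Lnr/(2C)$ is vacuous: the containment $m(X),m(Y)\in[-R-r/2,R+r/2]$ gives $|m(X)-m(Y)|\le2R+r$, which is unbounded in $R$ and is not in general below $3Cd/(Ln)$; an $R$-independent bound is the entire point of the lemma. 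Finally, one minor slip: from $m(X)=X_{(\lfloor n/2\rfloor)}$ you get $|\{i:X_i\le m(X)\}|\ge\lfloor n/2\rfloor$, not $\lceil n/2\rceil$; the target should be $\lfloor n/2\rfloor$, after which the sub-case $b\le2d+1$ (with $3d$ in range) does go through.
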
Using this constraint and the definition of the ``flattened'' Laplacian mechanism, it can be shown using elementary arguments that indeed for inputs $X,Y \in \mathcal{H}$ it indeed holds for all real $q$, $f_{\hat{\mathcal{A}}\left(X\right)} (q) \leq e^{\frac{\epsilon}{2} d_H(X,Y)}f_{\hat{\mathcal{A}}\left(Y\right)} (q),$ which certifies the $\epsilon/2$-differential privacy. The following lemma holds.
\begin{lemma}\label{lem:restricted-DP}
The algorithm $\hat{\mathcal{A}}$ defined on $\mathcal{H}$ is $\frac{\epsilon}{2}$-differentially private.
\end{lemma}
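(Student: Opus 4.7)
My plan is to establish differential privacy through a pointwise density-ratio bound. Since each $\hat{\mathcal{A}}(X)$ with $X \in \mathcal{H}$ is absolutely continuous on the fixed interval $\mathcal{I} = [-R-4Cr, R+4Cr]$, it suffices to show that for all $\omega \in \mathcal{I}$ and all $X, Y \in \mathcal{H}$,
\begin{equation*}
f_{\hat{\mathcal{A}}(X)}(\omega) \leq \exp\!\left(\tfrac{\epsilon}{2}\, d_H(X,Y)\right) f_{\hat{\mathcal{A}}(Y)}(\omega).
\end{equation*}
Abbreviating $\phi(X,\omega) := \min\!\left\{\tfrac{Ln}{3C}|m(X)-\omega|,\, Lrn\right\}$, this ratio equals $\tfrac{\hat{Z}(Y)}{\hat{Z}(X)} \exp\!\left(\tfrac{\epsilon}{4}(\phi(Y,\omega) - \phi(X,\omega))\right)$, so the task splits into (i) showing $\hat{Z}$ is independent of $X \in \mathcal{H}$, and (ii) bounding $|\phi(X,\omega) - \phi(Y,\omega)|$ by a linear function of $d_H(X,Y)$.

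\textbf{Step 1: the normalizer cancels.} The inner ``tent'' $\tfrac{Ln}{3C}|m(X)-\omega|$ first saturates at the cap $Lrn$ when $|m(X)-\omega| = 3Cr$, so the non-flat part of $\phi(X,\cdot)$ lives on $[m(X)-3Cr,\, m(X)+3Cr]$. Since $X \in \mathcal{H}$ forces $m(X) \in [-R-r/2,\, R+r/2]$ and $C > 1/2$, this non-flat window sits entirely inside $\mathcal{I}$. Splitting the integral accordingly,
\begin{equation*}
\hat{Z}(X) = \int_{m(X)-3Cr}^{m(X)+3Cr} \exp\!\left(-\tfrac{\epsilon Ln}{12C}|m(X)-\omega|\right) d\omega + \bigl(|\mathcal{I}| - 6Cr\bigr)\, e^{-\epsilon Lrn/4}.
\end{equation*}
A translation $u := \omega - m(X)$ kills the dependence on $m(X)$ in the first integral, so $\hat{Z}(X)$ depends only on $L, r, n, C, \epsilon$, and $|\mathcal{I}|$, not on $X$.

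\textbf{Step 2: approximate Lipschitzness of $\phi$.} I split on $d_H(X,Y)$. If $d_H(X,Y) \leq Lnr$, Lemma~\ref{lemma:sensitivity-median} applies and yields $|m(X) - m(Y)| \leq \tfrac{3C}{Ln} d_H(X,Y)$; combining the reverse triangle inequality $\bigl||m(X)-\omega| - |m(Y)-\omega|\bigr| \leq |m(X)-m(Y)|$ with the $1$-Lipschitzness of $t \mapsto \min(t, Lrn)$ gives
\begin{equation*}
|\phi(X,\omega) - \phi(Y,\omega)| \leq \tfrac{Ln}{3C}\, |m(X) - m(Y)| \leq d_H(X,Y).
\end{equation*}
If instead $d_H(X,Y) > Lnr$, the cap gives for free $|\phi(X,\omega) - \phi(Y,\omega)| \leq Lrn \leq d_H(X,Y)$, since $\phi$ takes values in $[0, Lrn]$. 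Either way $|\phi(X,\omega) - \phi(Y,\omega)| \leq d_H(X,Y)$, and combined with Step 1 the density ratio is bounded by $\exp(\tfrac{\epsilon}{4} d_H(X,Y)) \leq \exp(\tfrac{\epsilon}{2} d_H(X,Y))$, as required.

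\textbf{Main obstacle.} The substantive design aspect, and hence the main thing to verify, is that the cap level $Lrn$, the slope $\tfrac{Ln}{3C}$, and the ambient interval $\mathcal{I}$ are balanced consistently: the tent must be narrow enough that its non-flat window fits inside $\mathcal{I}$ for every admissible position of $m(X) \in [-R-r/2, R+r/2]$ (so the normalizer cancels), and simultaneously low enough that in the regime where Lemma~\ref{lemma:sensitivity-median} no longer applies ($d_H > Lnr$) the trivial bound $\phi \leq Lrn$ already absorbs the sensitivity. The single constraint $C > 1/2$ packages both requirements at once; checking its compatibility with the quantities defining $\mathcal{H}$ and $\mathcal{I}$ is the main accounting task that makes the flattened Laplace mechanism work.
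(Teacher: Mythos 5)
Your proof is correct, and while it follows the same overall strategy as the paper (reduce $\epsilon/2$-DP to a pointwise density-ratio bound and invoke Lemma~\ref{lemma:sensitivity-median}), it differs in two substantive places. First, the paper does \emph{not} use the constancy of $\hat{Z}$ in its privacy proof: it bounds the ratio $Z_Y/Z_X$ by a separate integral-comparison argument, paying an extra factor $e^{\epsilon d_H(X,Y)/4}$, and the claimed $\epsilon/2$ is exactly the sum of the two $\epsilon/4$ contributions. Your Step 1 instead proves $\hat{Z}(X)=\hat{Z}(Y)$ outright (this is the paper's separate Lemma~\ref{norm_same}, proved by the same translation argument you give, with the same $C>1/2$ check that $[m(X)-3Cr,m(X)+3Cr]\subseteq\mathcal{I}$), so the normalizers cancel and you actually obtain $\epsilon/4$-DP — strictly stronger than what the lemma asserts. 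Second, for the unnormalized ratio the paper invokes the subadditivity of $x\mapsto\min\{a|x|,b\}$ (its Property~\ref{property:triangular-inequality-in-min}) together with the bound $\min\{\tfrac{Ln}{3C}|m(X)-m(Y)|,Lrn\}\le d_H(X,Y)$, whereas you use the reverse triangle inequality plus $1$-Lipschitzness of the cap, with an explicit case split on whether $d_H(X,Y)\le Lnr$. That case split is a genuine point in your favor: Lemma~\ref{lemma:sensitivity-median} is only stated under the hypothesis $d_H(X,Y)\le Lnr$, and the paper's displayed inequality \eqref{equation:semiLip-median} quietly relies on the cap $Lrn$ to absorb the complementary case, exactly as you spell out. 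Both routes are valid; yours is marginally tighter and more explicit about the regime where the sensitivity lemma does not apply.
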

With respect to accuracy guarantees the following elementary result can be proven by using the density of the ``flattened'' Laplace distribution, as defined in Equation \eqref{restricted}.
\begin{lemma}\label{lemma:accuracy-typical0}
Suppose $C>1$ and a fixed $X \in \mathcal{H}$. Then for any $L,R,r>0$ with $Lr \leq \frac{1}{2}$ and $\alpha \in (0,r)$, $\beta \in (0,1)$ for some $ n=O\left(\frac{ \log \left(\frac{1}{\beta}\right)}{\epsilon L\alpha }+\frac{\log \left(\frac{R}{ \alpha}+1\right)}{\epsilon Lr}\right)$ it holds \begin{equation*}\mathbb{P} [ |\hat{\mathcal{A}}(X)-m\left(X\right)|  \geq \alpha] \leq \beta,\end{equation*}where the probability is with respect to the randomness of the algorithm $\hat{\mathcal{A}}$.
\end{lemma}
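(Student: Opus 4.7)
}
Since $X$ is held fixed and the only randomness is in $\hat{\mathcal{A}}(X)$, the claim is purely a statement about the flattened Laplace density in \eqref{restricted}. The plan is a direct calculation: lower bound the normalizing constant $\hat Z$, upper bound the integral of the density outside the window $[m(X)-\alpha, m(X)+\alpha]$, and then force the resulting ratio below $\beta$ by choosing $n$ as prescribed.

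First, I would rewrite the density in a cleaner form. Set $a:=\frac{\epsilon L n}{12 C}$ and note that the truncation point in the $\min$ of \eqref{restricted} is exactly $|m(X)-\omega|=3Cr$: inside that window the density is proportional to $e^{-a|m(X)-\omega|}$ (a truncated Laplace shape), while outside it is the constant $e^{-\epsilon Lrn/4}$. Since $X\in\mathcal{H}$ implies $m(X)\in[-R-r/2,R+r/2]$ and the output interval is $\mathcal{I}=[-R-4Cr,R+4Cr]$, the window $[m(X)-3Cr,m(X)+3Cr]$ lies entirely inside $\mathcal{I}$ (using $C>1$ and $r\ge \alpha$), so the density is symmetric about $m(X)$ on a symmetric portion of $\mathcal{I}$. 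This gives
\[
\hat Z \;=\; \frac{2(1-e^{-3Cra})}{a} \;+\; \bigl(|\mathcal{I}|-6Cr\bigr)\, e^{-\epsilon Lrn/4}.
\]
I would then use $\hat Z\ge \frac{2(1-e^{-3Cra})}{a}\ge \frac{c}{a}$ for some absolute constant $c>0$, assuming $\epsilon Lrn \geq 4$ (which we will verify is implied by the chosen $n$).

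Next I would bound the tail. Since $\alpha<r<3Cr$, the region $\{|m(X)-\omega|\ge\alpha\}\cap\mathcal{I}$ splits into an exponential part (from $\alpha$ up to $3Cr$ on each side) and a flat part (the rest of $\mathcal{I}$, of total length at most $2R+2Cr$). Integrating gives
\[
\Pr\bigl[\,|\hat{\mathcal{A}}(X)-m(X)|\ge\alpha\,\bigr]
\;\le\; \frac{1}{\hat Z}\!\left[\frac{2e^{-a\alpha}}{a} \;+\; (2R+2Cr)\,e^{-\epsilon Lrn/4}\right]
\;\le\; \frac{2}{c}\,e^{-a\alpha} \;+\; \frac{a(2R+2Cr)}{c}\,e^{-\epsilon Lrn/4}.
\]
It then suffices to make each summand at most $\beta/2$. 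The first condition, $\frac{2}{c}e^{-a\alpha}\le\beta/2$, is equivalent to $n\ge \frac{12C}{\epsilon L\alpha}\log\frac{4}{c\beta}$, which is absorbed in the first term of the claimed bound. For the second condition, substituting $s:=\epsilon Lrn/4$ turns it into $s\,e^{-s}\lesssim \beta\cdot\frac{r}{R+r}$; solving for $s$ gives $s=O(\log(R/(r\beta)) + \log\log(R/(r\beta)))$, i.e.\ $n=O\bigl(\frac{\log(R/\alpha+1)}{\epsilon Lr} + \frac{\log(1/\beta)}{\epsilon Lr}\bigr)$, which is again dominated by the stated bound since $\log(1/\beta)/(\epsilon Lr)\le \log(1/\beta)/(\epsilon L\alpha)$.

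I do not foresee a real obstacle: the only delicate point is verifying that with the chosen $n$ one has $\epsilon Lrn\ge 4$ so the lower bound $\hat Z\ge c/a$ is valid, and that the $\log\log$ term absorbs into $\log(R/\alpha+1)$; both are routine. The lemma then follows by a union bound over the two summands and choosing constants appropriately.
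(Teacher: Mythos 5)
Your proof is correct and follows essentially the same route as the paper's own argument: lower bound $\hat{Z}$ via the Laplacian part of the density, split the tail into the exponential portion and the flat portion, and choose $n$ so that each contribution is at most $\beta/2$. The only cosmetic differences are that you integrate both sides of the Laplacian window (where the paper integrates only one side) and that you explicitly flag the condition $\epsilon Lrn \ge 4$, which the paper handles implicitly inside the $\left(1-\exp(-\Theta(n\epsilon Lr/C))\right)^{-1}$ factor.
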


\paragraph{The general algorithm} Now we construct the final algorithm using the Extension Lemma (Lemma \ref{extension}). First, note that the algorithm $\hat{\mathcal{A}}$ defined on $\mathcal{H}$ based on Lemma \ref{lem:restricted-DP} is $\frac{\epsilon}{2}$-differentially private. Hence, using the Extension Lemma there is an $\epsilon$-differentially private algorithm $\mathcal{A}$ defined on the whole input space $\mathbb{R}^n$ with the property that for any $X \in \mathcal{H}$ it holds $\mathcal{A}(X) \overset{d}{=}\hat{\mathcal{A}}(X).$ This is the algorithm we consider for the upper bound. 

An investigation on the proof of the Extension Lemma in \cite{borgs2018private}, deferred due to space constraints to the appendix and specifically Section \ref{prem:extension}, shows that the algorithm $\mathcal{A}$ can be described as receiving input $X \in \mathbb{R}^n$ and outputting a sample from the continuous distribution with density given by \begin{equation}\label{alg:general} f_{\mathcal{A}\left(X\right)}(\omega)=\frac{1}{Z_{X}}\exp\left(\displaystyle\inf_{X' \in \mathcal{H}} \left[ \frac{\epsilon}{2} d_H(X,X') -\frac{\epsilon}{4} \min\left\{ \mystar  \left|m(X')-\omega\right|,\mysquare \right\}\right] \right), \end{equation}where $\omega \in \myregion$ and $Z_X$ is the appropriate normalizing constant. 

We now briefly elaborate on its accuracy guarantee. The set $\mathcal{H}$, besides crucial for Lemma \ref{lemma:sensitivity-median}, is important to our construction because of the following technical ``typical'' guarantee it has.
\begin{lemma}\label{lemma:typical-set-median} Let $\mathcal{D}$ be an admissible distribution for some parameters $L,R,r>0$ with $Lr \leq \frac{1}{2}$, let $\beta \in (0,1),$ $n \geq 3$ and let $X=(X_1,\ldots,X_n)$ consisting of i.i.d. samples from $\mathcal{D}$. Suppose $C>5$ satisfies $4Ce\exp\left( -\frac{2C}{27}\right)<1/2$. Then for some $C'=C'(C)>0$ and $n=O\left( \frac{\log \frac{1}{\beta}}{L^2r^2}\right)$ it holds
$$\displaystyle \mathbb{P}\left( \exists X' \in \mathcal{H} \text{ s.t. } d_H\left(X , X' \right)  \leq C' \log \frac{1}{\beta}, m(X')=m(X) \right) \geq 1-\beta.$$
\end{lemma}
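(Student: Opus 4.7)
The plan is to produce the claimed $X'$ explicitly by an order-statistic surgery on $X$, and to control the size of the surgery through standard binomial concentration.

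\emph{Step 1 (localize the empirical median).} By Assumption \ref{assumption:median-mass} the CDF $F$ of $\mathcal{D}$ has slope at least $L$ on $[m(\mathcal{D})-r, m(\mathcal{D})+r]$, so the DKW inequality together with the density lower bound gives $\pr{|m(X) - m(\mathcal{D})| > r/4} \leq 2\exp(-nL^2 r^2/8)$, which is at most $\beta/3$ as soon as $n \geq (8/L^2 r^2)\log(6/\beta)$. On this event the last condition $m(X) \in [-R-r/2, R+r/2]$ in the definition of $\mathcal{H}$ is automatic, and every window $[m(X), m(X)+\kappa C/(Ln)]$ with $\kappa \leq K_{\max} := \lfloor Lnr/(2C)\rfloor$ is contained in $[m(\mathcal{D})-r, m(\mathcal{D})+r]$, i.e.\ in the density-$\geq L$ region.

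\emph{Step 2 (the surgery).} Since $\mathcal{D}$ has a density, the samples are a.s.\ distinct; let $X_{(1)} < \cdots < X_{(n)}$ be the order statistics, $j_0 := \lceil n/2 \rceil$, so $m(X) = X_{(j_0)}$. The counting constraints in \eqref{equation:sensitivity-set-median} are equivalent to $|X_{(j_0 \pm \kappa)} - m(X)| \leq \kappa C/(Ln)$ for every $\kappa \leq K_{\max}$. Let $\kappa^\ast_\pm$ be the largest such $\kappa$ at which the $\pm$ inequality fails in $X$ (or $0$ if none fails). Define $X'$ by leaving the rank-$j_0$ sample fixed and reassigning the samples of ranks $j_0 + 1, \ldots, j_0 + \kappa^\ast_+$ and $j_0 - 1, \ldots, j_0 - \kappa^\ast_-$ to fresh values $m(X) \pm \epsilon_k$, with $0 < \epsilon_1 < \epsilon_2 < \cdots$ chosen smaller than both $C/(Ln)$ and the smallest gap of $X$ on the corresponding side. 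No sample crosses $m(X)$, so $m(X') = m(X)$; for $\kappa \leq \kappa^\ast_\pm$ the new order statistic $X'_{(j_0 \pm \kappa)}$ equals one of the freshly inserted points and satisfies the inequality trivially, while for $\kappa > \kappa^\ast_\pm$ it equals the unchanged $X_{(j_0 \pm \kappa)}$ which already satisfies the inequality by definition of $\kappa^\ast_\pm$. Hence $X' \in \mathcal{H}$ and $d_H(X, X') = \kappa^\ast_+ + \kappa^\ast_-$.

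\emph{Step 3 (tail bound on $\kappa^\ast_\pm$).} On the event from Step 1, I bound $\pr{\kappa^\ast_+ \geq t}$ (the $-$ side is symmetric). If $\kappa^\ast_+ \geq \kappa$, then the window $[m(X), m(X) + \kappa C/(Ln)]$ contains at most $\kappa$ samples, yet by Step 1 its mass under $\mathcal{D}$ is at least $C\kappa/n$. Replacing $m(X)$ by a deterministic grid of candidate positions in $[m(\mathcal{D}) - r/4, m(\mathcal{D}) + r/4]$ of spacing $\ll 1/(nL)$ (contributing a $\operatorname{poly}(n)$ factor in the union bound) and applying a multiplicative Chernoff lower tail to each of the resulting deterministic windows yields $\pr{\kappa^\ast_+ \geq \kappa} \leq 4Ce \exp(-2C\kappa/27)$, the polynomial discretization overhead being absorbed because $C > 5$. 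The hypothesis $4Ce\exp(-2C/27) < 1/2$ ensures that summing this bound over $\kappa \geq t$ produces a convergent geometric series, giving $\pr{\kappa^\ast_+ \geq t} \leq 8Ce \exp(-2Ct/27)$. Choosing $t = C'\log(1/\beta)$ with $C' := (27/(2C)) \cdot c_0$ for an absolute constant $c_0$ large enough makes this at most $\beta/3$. Intersecting with the analogous bound for $\kappa^\ast_-$ and with the event of Step 1 completes the proof.

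\textbf{Main obstacle.} The delicate point is Step 3: the window $[m(X), m(X)+\kappa C/(Ln)]$ is random through $m(X)$, which rules out a direct Chernoff application. I sidestep this by replacing $m(X)$ with a fine deterministic grid of candidates in its high-probability range; the resulting $\log n$ overhead is absorbed into $C'$ because $n = O(\log(1/\beta)/(L^2 r^2))$ is polynomial in $1/\beta$. Matching the explicit prefactor to the stated $4Ce\exp(-2C/27) < 1/2$ is the bookkeeping heart of the proof, fixing the specific constant $2/27$ in terms of the Chernoff form used and the size of the discretization step.
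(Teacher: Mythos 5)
There is a genuine gap in Step~3. Your surgery in Step~2 is sound and is close in spirit to what the paper does (the paper moves exactly $T$ points on each side rather than the data‑dependent number $\kappa^\ast_\pm$, but both constructions produce an $X'\in\mathcal{H}$ with $m(X')=m(X)$). The problem is how you control the tail of $\kappa^\ast_\pm$.

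Two issues in Step~3, one cosmetic and one fatal. The cosmetic one: ``$\kappa^\ast_+\ge\kappa$'' does \emph{not} imply that the window $[m(X),m(X)+\kappa C/(Ln)]$ contains at most $\kappa$ samples, since $\kappa^\ast_+$ is defined as the \emph{largest} failing index; what you actually need is a union bound $\Pr[\kappa^\ast_+\ge t]\le\sum_{\kappa\ge t}\Pr[\text{constraint }\kappa\text{ fails}]$, which is what you in effect sum at the end, but the intermediate display is stated as a single-event implication. The fatal one is the discretization. To compare $[m(X),m(X)+\kappa C/(Ln)]$ with a deterministic window you need a grid of spacing $\ll 1/(nL)$ across an interval of width $\Theta(r)$, i.e.\ $\Omega(nLr\cdot nL)=\operatorname{poly}(n)$ grid points. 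A per-grid-point Chernoff bound is $e^{-\Theta(C\kappa)}$, \emph{independent of $n$}, so the union bound over the grid costs a multiplicative $\operatorname{poly}(n)$. To get $\Pr[\kappa^\ast_+\ge t]\le\beta/3$ with $t=C'\log\frac1\beta$ you would then need $\operatorname{poly}(n)\,e^{-\Theta(CC'\log\frac1\beta)}\le\beta/3$, which forces $\log n\lesssim CC'\log\frac1\beta$. But the lemma must hold for \emph{all} $n\ge E\log(\frac1\beta)/(L^2r^2)$ with a $C'=C'(C)$ independent of $n,L,r,\beta$, and even at the threshold $n=\Theta(\log(\frac1\beta)/(L^2r^2))$ the quantity $\log n$ contains the term $\log\frac{1}{L^2r^2}$, which is not bounded by any function of $\beta$ alone. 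So ``absorbed into $C'$ because $n$ is polynomial in $1/\beta$'' does not work: $n$ is polynomial in $1/(L^2r^2)$ too, and that factor cannot be hidden in a $\beta$-dependent exponent.

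The paper sidesteps discretization entirely (Lemma~\ref{lemma:typical_set_general}). It first decomposes $\mathcal{D}$ as a mixture $\tfrac{rL}{2}\,\mathrm{Unif}[m(\mathcal{D})-r,m(\mathcal{D})+r]+(1-\tfrac{rL}{2})\mathcal{D}'$, then \emph{conditions} on the realized median position $m(X)$ and on which samples fall to its right and come from the uniform component; conditionally, those samples are i.i.d.\ with density in $[\tfrac{2}{3r},\tfrac{2}{r}]$ on $[m(X),m(\mathcal{D})+r]$. The resulting binomial tail bound for each $\kappa$ is then completely $n$-free and only carries an $e^{-\Theta(L^2r^2n)}$ correction from the preliminary event $|m(X)-m(\mathcal{D})|\le r/2$. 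Replacing the grid/union step with this conditioning (i.e., work with the conditional law of the samples to the right of $m(X)$, not the unconditional binomial over a deterministic interval) repairs your Step~3 and produces exactly the $(4Ce\,e^{-2C/27})^\kappa$-type bounds you were targeting, without any $\operatorname{poly}(n)$ overhead.
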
In words, the lemma claims that with probability $1-\beta$ an $n$-tuple of i.i.d. samples from $\mathcal{D}$, call it $X$, has $O\left(\log \frac{1}{\beta}\right)$ Hamming distance from at least one element $X' \in \mathcal{H}$ with $m(X)=m(X').$

Now the accuracy argument for the performance of $\mathcal{A}$ follows from the following high-level idea. By the definition of $\epsilon$-differential privacy (Definition \ref{definition:epsilon-privacy}) we know that the algorithm $\mathcal{A}$ when applied to two inputs of sufficiently ``small'' Hamming distance the outputs are ``close'' in distribution. Hence, Lemma \ref{lemma:typical-set-median} implies that the law of $\mathcal{A}(X)$ with input $X$ is ``close'' to the law of $\mathcal{A}(X')$ where the input is some $X' \in \mathcal{H}$ with $m(X')=m(X)$. Now since $X' \in \mathcal{H}$ by construction the law of $\mathcal{A}(X')$  equals the law of $\hat{\mathcal{A}}(X')$. Furthermore from Lemma \ref{lem:restricted-DP} we know that $\hat{\mathcal{A}}(X')$, and therefore also $\mathcal{A}(X)$, concentrates around $m(X')=m(X)$.  Finally standard concentration results (see e.g. \cite[Lemma 3]{ptr20}) imply that with $n=\Omega\left(\log \frac{1}{\beta}/(L^2\alpha^2)\right)$ samples it holds $|m(X)-m(\mathcal{D})|\leq \alpha$ with probability $1-\beta.$ 

Using the above idea we obtain the following result.
\begin{theorem}\label{thm:upper_bound}
Suppose $C>0$ is a sufficiently large constant, 
$\epsilon \in (0,1)$, $\mathcal{D}$ is an admissible distribution with respect to some arbitrary $L,R,r>0$ with $Lr \leq \frac{1}{2}$ and $\mathcal{A}$ is the $\epsilon$-differentially private algorithm defined above. Then for any $\alpha \in (0,r)$ and $ \beta \in (0,1)$ for some $$ n=O\left( \frac{\log \left( \frac{1}{\beta}\right)}{L^2 \alpha^2}+\frac{ \log \left(\frac{1}{\beta}\right)}{\epsilon L\alpha }+\frac{\log \left(\frac{R}{ \alpha}+1\right)}{\epsilon Lr}\right)$$ it holds $\quad\quad \quad \mathbb{P}_{X_1,X_2,\ldots,X_n \overset{iid}{\sim} \mathcal{D}} [ |\mathcal{A}(X_1,\ldots,X_n)-m\left(\mathcal{D}\right)|  \geq \alpha] \leq \beta.$
\end{theorem}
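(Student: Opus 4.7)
The plan is to follow exactly the outline sketched right after Lemma \ref{lemma:typical-set-median}: use concentration of the empirical median to reduce $m(\mathcal{D})$ to $m(X)$, use the typical-set lemma to replace $X$ by a nearby $X' \in \mathcal{H}$ with $m(X')=m(X)$, invoke the $\epsilon$-differential privacy of $\mathcal{A}$ to transfer distributional guarantees from $\mathcal{A}(X')$ to $\mathcal{A}(X)$, and finally exploit $\mathcal{A}(X') \overset{d}{=} \hat{\mathcal{A}}(X')$ together with Lemma \ref{lemma:accuracy-typical0} to conclude that $\mathcal{A}(X)$ concentrates within $\alpha/2$ of $m(X) = m(X')$. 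I would split the target failure probability $\beta$ into three pieces of size $\beta/3$ and bound each contribution separately.

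First, by the standard sub-Gaussian concentration bound for the empirical median of an admissible distribution (e.g.\ \cite[Lemma 3]{ptr20}), taking $n = \Omega(\log(1/\beta)/(L^2\alpha^2))$ guarantees that the event $E_1 = \{|m(X)-m(\mathcal{D})| \leq \alpha/2\}$ has probability at least $1-\beta/3$. Second, Lemma \ref{lemma:typical-set-median} (applied with $\beta/3$ in place of $\beta$) produces, with probability at least $1-\beta/3$, an instance $X' \in \mathcal{H}$ satisfying $d_H(X,X') \leq C' \log(3/\beta)$ and $m(X')=m(X)$; the required sample size $\Omega(\log(1/\beta)/(L^2 r^2))$ is absorbed by the first requirement because $\alpha \leq r$. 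Call the joint event $E_1 \cap E_2$.

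On $E_1 \cap E_2$, fix a witness $X' \in \mathcal{H}$. Using $m(X)=m(X')$ and $|m(X)-m(\mathcal{D})| \leq \alpha/2$, the triangle inequality yields the event inclusion $\{|\mathcal{A}(X)-m(\mathcal{D})| \geq \alpha\} \subseteq \{|\mathcal{A}(X)-m(X')| \geq \alpha/2\}$. Because $\mathcal{A}$ is $\epsilon$-differentially private and $d_H(X,X') \leq C'\log(3/\beta)$, Definition \ref{definition:epsilon-privacy} gives
\begin{equation*}
\mathbb{P}\!\left[|\mathcal{A}(X)-m(X')| \geq \alpha/2 \,\middle|\, X\right] \leq (3/\beta)^{\epsilon C'} \cdot \mathbb{P}\!\left[|\mathcal{A}(X')-m(X')| \geq \alpha/2\right].
\end{equation*}
Since $X' \in \mathcal{H}$, the Extension Lemma reduces the right-hand factor to $\mathbb{P}[|\hat{\mathcal{A}}(X')-m(X')| \geq \alpha/2]$. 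I would then invoke Lemma \ref{lemma:accuracy-typical0} with accuracy $\alpha/2$ and tightened failure probability $\beta_1 := (\beta/3)^{\epsilon C' + 1}$, chosen precisely so the $(3/\beta)^{\epsilon C'}$ blow-up is absorbed and the overall right-hand side is at most $\beta/3$. Summing the three $\beta/3$ contributions delivers the desired bound.

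The main obstacle is the management of this privacy blow-up. Tightening $\hat{\mathcal{A}}$'s failure probability from $\beta$ to $(\beta/3)^{\epsilon C'+1}$ must cost only a constant factor in the sample complexity; this is exactly where the hypothesis $\epsilon \in (0,1)$ from Theorem \ref{thm:rate} becomes crucial, since it caps the exponent by $C'+1$ and keeps $\log(1/\beta_1) = O(\log(1/\beta))$. Plugging this into Lemma \ref{lemma:accuracy-typical0} yields an $O(\log(1/\beta)/(\epsilon L \alpha)+\log(R/\alpha+1)/(\epsilon L r))$ contribution, and combined with the $O(\log(1/\beta)/(L^2\alpha^2))$ requirement from median concentration this reproduces exactly the three-term sample complexity claimed in Theorem \ref{thm:upper_bound}.
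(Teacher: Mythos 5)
Your proposal is correct and follows essentially the same route as the paper's proof (via Theorem \ref{thm:upper_bound0}): concentration of the empirical median, the typical-set lemma to find a nearby $X'\in\mathcal{H}$ with the same median, the privacy guarantee to transfer from $\mathcal{A}(X)$ to $\mathcal{A}(X')\overset{d}{=}\hat{\mathcal{A}}(X')$, and the accuracy lemma for the restricted algorithm. The only difference is bookkeeping: you absorb the $e^{\epsilon d_H(X,X')}$ blow-up by tightening the restricted algorithm's failure probability to $(\beta/3)^{\epsilon C'+1}$, whereas the paper runs everything at an auxiliary level $\gamma$ and chooses $\gamma=\Theta(\beta^{2/3})$ after forcing $C'(C)<1/2$ by taking $C$ large; both cost only a constant factor in $n$.
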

Finally, the discussion on the computationally efficient implementation of the algorithm $\mathcal{A}$ is deferred to Section \ref{sec:algorithmic}.

\subsection{Lower Bounds}\label{sec:LB}

In this subsection we discuss the lower bound part on $n_{\mathrm{sc}}(\alpha,\beta)$ as stated in Theorem \ref{thm:rate}. We establish that all three terms are necessary for all values of the parameters, using a different method for each one of them. We omit the complete proofs for the appendix and provide only brief hints for the methods followed.  First, we start with the $\epsilon$-independent part (the ``non-private'' part).
\begin{proposition}\label{propLB1}
Let $L>0, \epsilon \in (0,1), R, r>0$ with $Lr \leq \frac{1}{2}$ and $\alpha \in (0,\min\{r,R\}), \beta \in (0,\frac{1}{2})$. Suppose $n=o\left( \log \left( \frac{1}{\beta}\right)/(L^2 \alpha^2)\right).$ Then for any algorithm $\mathcal{A}$  there exists an admissible distribution $\mathcal{D}$ for this specific value of the parameters with the property $$\mathbb{P}_{X_1,X_2,\ldots,X_n \overset{iid}{\sim} \mathcal{D}} [ |\mathcal{A}(X)-m\left(\mathcal{D}\right)|  \geq \alpha] > \beta.$$
\end{proposition}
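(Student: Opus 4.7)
I would prove the lower bound by the classical two-point Le Cam reduction: construct two $(L,R,r)$-admissible distributions $\mathcal{D}_0, \mathcal{D}_1$ whose medians differ by exactly $2\alpha$ and whose per-sample Kullback--Leibler divergence satisfies $\mathrm{KL}(\mathcal{D}_0 \| \mathcal{D}_1) = O(L^2\alpha^2)$, and then invoke the Bretagnolle--Huber inequality to convert indistinguishability of the product measures into a sample complexity lower bound. This is a purely information-theoretic argument and does not use that $\mathcal{A}$ is differentially private.

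For the construction I would take $\mathcal{D}_0$ to be the uniform distribution on $[-1/(2L), 1/(2L)]$ with density $L$, which is clearly $(L,R,r)$-admissible with median $0$. I would then define $\mathcal{D}_1$ by a small tilt performed in the ``slack'' regions lying outside the mandatory admissibility window $[-r,r]$: its density is $L(1-\eta)$ on $[-1/(2L), -r]$, $L$ on $[-r,r]$, and $L(1+\eta)$ on $[r, 1/(2L)]$, with tilt parameter $\eta := 4L\alpha/(1-2Lr)$ chosen so that $\mathcal{D}_1$ integrates to $1$ and its median lands at exactly $2\alpha$. A quick check confirms that $\mathcal{D}_1$ is $(L,R,r)$-admissible in the parameter regime $\alpha \le (1-2Lr)/(4L)$, since its density equals $L$ throughout $[2\alpha-r, r]$ and equals $L(1+\eta) \geq L$ on $[r, 2\alpha+r]$. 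Straightforward integration of the log-density ratio yields
\[
\mathrm{KL}(\mathcal{D}_0 \| \mathcal{D}_1) \;=\; \tfrac{1-2Lr}{2}\,\log\!\tfrac{1}{1-\eta^2} \;=\; \Theta\!\left(\frac{L^2\alpha^2}{1-2Lr}\right),
\]
which is $O(L^2\alpha^2)$ whenever $Lr$ is bounded away from $1/2$ by an absolute constant.

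For the reduction, suppose for contradiction that an estimator $\mathcal{A}$ satisfies $\mathbb{P}[|\mathcal{A}(X) - m(\mathcal{D}_i)| \ge \alpha] \le \beta$ for both $i \in \{0,1\}$. Since $m(\mathcal{D}_1) - m(\mathcal{D}_0) = 2\alpha$, the event $\{\mathcal{A}(X) < \alpha\}$ has probability at least $1-\beta$ under $\mathcal{D}_0^{\otimes n}$ but at most $\beta$ under $\mathcal{D}_1^{\otimes n}$, so the total variation distance between the product measures is at least $1 - 2\beta$. Combining the Bretagnolle--Huber inequality $d_{\mathrm{TV}}(P,Q)^2 \le 1 - e^{-\mathrm{KL}(P\|Q)}$ with tensorization of KL then gives $n \cdot \mathrm{KL}(\mathcal{D}_0 \| \mathcal{D}_1) \ge -\log(1-(1-2\beta)^2) \ge \log(1/(4\beta))$, and plugging in the KL bound above yields $n = \Omega(\log(1/\beta)/(L^2\alpha^2))$, contradicting the hypothesis $n = o(\log(1/\beta)/(L^2\alpha^2))$.

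The main difficulty lies in the first step: the admissibility condition pins the density at level $L$ on a window of length $2r$ around the median, so any mass rearrangement used to shift the median by $2\alpha$ must be confined to the complementary slack region of total length $(1-2Lr)/L$. This constraint is precisely what produces the $(1-2Lr)$ factor in the KL estimate and the upper bound $\alpha \le (1-2Lr)/(4L)$ under which $\mathcal{D}_1$ remains admissible; both are harmless for $Lr$ bounded away from $1/2$ by a constant. The residual boundary regimes -- $Lr \to 1/2$, where the slack vanishes, and $\alpha$ close to $r$, where the tilt becomes large -- are degenerate and do not affect the claimed asymptotic lower bound; they can be absorbed by a minor perturbation of the base distribution (for instance, replacing the uniform $\mathcal{D}_0$ with a density that is $L$ on $[-r,r]$ and has a dedicated smooth pocket outside the admissibility window, ensuring that a nonzero slack is always available regardless of $Lr$).
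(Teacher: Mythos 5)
Your route is genuinely different from the paper's. The paper proves Proposition \ref{propLB1} by combining two separate lower bounds: (i) a reduction to estimating a Bernoulli parameter $p$, via a three-component mixture placing mass $(1-2Lr)(1-p)$ at $-2r$, mass $(1-2Lr)p$ at $2r$, and mass $2Lr$ uniformly on $[-r,r]$, which yields $n=\Omega\bigl((1-2Lr)\log(\tfrac{1}{\beta})/(L^2\alpha^2)\bigr)$; and (ii) a bound of order $r^2\log(\tfrac{1}{\beta})/\alpha^2$ coming from the family of uniform distributions on width-$2r$ intervals. The two are then summed, using $L^2r^2+(1-2Lr)=(1-Lr)^2\geq \tfrac14$, to remove the $(1-2Lr)$ degradation. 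Your two-point Le Cam/Bretagnolle--Huber argument with the tilted pair $\mathcal{D}_0,\mathcal{D}_1$ is cleaner and essentially self-contained; the normalization, the median location $2\alpha$ (when $2\alpha\le r$), and the computation $\mathrm{KL}=\tfrac{1-2Lr}{2}\log\tfrac{1}{1-\eta^2}=\Theta\bigl(L^2\alpha^2/(1-2Lr)\bigr)$ are all correct, and it gives the stated bound whenever $1-2Lr=\Omega(1)$.

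The genuine gap is the regime $Lr\to \tfrac12$, which you dismiss as degenerate but which is exactly where your argument, and your proposed patch, fail. Your KL is $\Theta\bigl(L^2\alpha^2/(1-2Lr)\bigr)$, not $O(L^2\alpha^2)$, and admissibility of $\mathcal{D}_1$ forces $\alpha\le (1-2Lr)/(4L)$, so as $Lr\uparrow\tfrac12$ the construction permits no shift at all. The suggested fix --- a ``dedicated smooth pocket outside the admissibility window'' --- is impossible at $Lr=\tfrac12$: admissibility requires density at least $L$ on a window of length $2r$ carrying total mass $2Lr=1$, so \emph{every} admissible distribution is then exactly uniform on a width-$2r$ interval and there is literally zero slack mass to tilt. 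This is not a removable boundary artifact of your construction but a change in the geometry of the hypothesis class, and the paper needs a second, structurally different argument (the uniform-location family, giving the $r^2\log(\tfrac{1}{\beta})/\alpha^2$ term) precisely to cover it; your proof is missing the analogue of that ingredient. A secondary, more easily repaired crack: when $r<2\alpha$ the median of $\mathcal{D}_1$ lands beyond $r$ where the density is $L(1+\eta)$, so it is strictly less than $2\alpha$ and the two error events are no longer disjoint; you would need to enlarge $\eta$ by a constant factor to restore separation $\ge 2\alpha$.
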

The key argument behind the proposition is to show that learning a a Bernoulli random variable at accuracy $\gamma>0$ with probability $1-\beta$ reduces to learning the median of an admissible distribution at accuracy $\gamma /L$ with probability $1-\beta$. Then using that the sample complexity of learning a Bernoulli distribution is known to require $\Theta\left( \log \left( \frac{1}{\beta}\right)/\gamma^2\right)$ samples we conclude the proof.
We \textcolor{black}{are ready to} state the lower bound on the first term on the $\epsilon$-dependent part.

\begin{proposition}\label{propLB2}
Let $L>0, \epsilon \in (0,1), R, r>0$ with $Lr \leq \frac{1}{2}$ and $\alpha \in (0,\min\{r,\frac{R}{2}\}), \beta \in (0,\frac{1}{2})$. Suppose $n=o\left(  \log \left(\frac{1}{\beta}\right)/(\epsilon L\alpha)\right).$ Then for any $\epsilon$-differentially private algorithm $\mathcal{A}$  there exists an admissible distribution $\mathcal{D}$ for this specific value of the parameters with the property $$\mathbb{P}_{X_1,X_2,\ldots,X_n \overset{iid}{\sim} \mathcal{D}} [ |\mathcal{A}(X)-m\left(\mathcal{D}\right)|  \geq \alpha] >\beta.$$

\end{proposition}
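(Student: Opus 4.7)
The plan is a two-point coupling argument combining the group-privacy property of $\epsilon$-DP with a maximal coupling of i.i.d.\ samples. The target rate $\Omega(\log(1/\beta)/(\epsilon L \alpha))$ suggests reducing to distinguishing two admissible distributions whose medians differ by $2\alpha$ while the $n$-sample product laws can be jointly generated so that only $\Theta(nL\alpha)$ coordinates disagree.

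For the hard pair I would take $\mathcal{D}_0 = U[-\tfrac{1}{2L}, \tfrac{1}{2L}]$ and $\mathcal{D}_1 = U[-\tfrac{1}{2L} + 2\alpha, \tfrac{1}{2L} + 2\alpha]$. Each has density exactly $L$ on an interval of length $1/L$; since $Lr \le \tfrac{1}{2}$ gives $r \le 1/(2L)$, the admissibility requirement $f \ge L$ on $[m(\mathcal{D})-r, m(\mathcal{D})+r]$ is satisfied, and $\alpha \le R/2$ places both medians $0$ and $2\alpha$ inside $[-R,R]$. A direct integration gives $p := d_{TV}(\mathcal{D}_0, \mathcal{D}_1) = 2L\alpha$ and the medians differ by $2\alpha$. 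Let $\pi$ be the product of $n$ copies of the maximal coupling of the two distributions, so that under $\pi$ the Hamming distance $d_H(X^{(0)}, X^{(1)})$ is distributed as $\mathrm{Bin}(n,p)$.

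The technical heart is the following coupling inequality, which follows by splitting the expectation over $\pi$ on $\{d_H \le k\}$ and invoking $\epsilon$-group-privacy pointwise for pairs at Hamming distance at most $k$: for every $\epsilon$-DP algorithm $\mathcal{A}$, every measurable $S$ and every $k \ge 0$,
\[
\Pr_{X \sim \mathcal{D}_0^{\otimes n}}\!\left[\mathcal{A}(X) \in S\right]
\;\le\; e^{\epsilon k}\,\Pr_{X \sim \mathcal{D}_1^{\otimes n}}\!\left[\mathcal{A}(X) \in S\right] + \Pr_\pi\!\left[d_H > k\right].
\]
Assume toward a contradiction that $\mathcal{A}$ is $\epsilon$-DP and satisfies $\Pr[|\mathcal{A}(X) - m(\mathcal{D}_b)| \ge \alpha] \le \beta$ for both $b\in \{0,1\}$ with $n = o(\log(1/\beta)/(\epsilon L \alpha))$. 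Taking $S = (-\infty, \alpha]$, accuracy under $\mathcal{D}_0$ gives $\Pr[\mathcal{A}(X) \in S] \ge 1 - \beta$, while accuracy under $\mathcal{D}_1$ (median $2\alpha$) forces $\Pr[\mathcal{A}(X) \in S] \le \beta$. Setting $k = \lfloor \tfrac{1}{\epsilon} \log \tfrac{1}{4\beta}\rfloor$ yields $e^{\epsilon k}\beta \le 1/4$, so the coupling inequality gives $\Pr[\mathrm{Bin}(n, p) > k] \ge 3/4 - \beta \ge 1/2$ in the regime $\beta \le 1/4$. A one-line Markov bound $\Pr[\mathrm{Bin}(n,p) > k] \le np/(k+1)$ then forces $np \ge (k+1)/2$, i.e., $n = \Omega(\log(1/\beta)/(\epsilon L \alpha))$, contradicting the assumption. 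The residual range $\beta \in [1/4, 1/2)$ is handled by the same calculation with $k$ a small absolute constant multiple of $1/\epsilon$, which yields $n = \Omega(1/(\epsilon L \alpha))$ and matches the target bound since $\log(1/\beta) = \Theta(1)$ throughout that range.

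The main obstacle I anticipate is the careful derivation of the coupling inequality above: one must condition on the joint event $\{d_H \le k\}$ and apply Definition \ref{definition:epsilon-privacy} pointwise to conditional probabilities, using that $\mathcal{A}(X^{(0)})$ depends only on $X^{(0)}$ while the expectations are taken against the joint law $\pi$. Once this step is in place, the remainder reduces to elementary tail estimates for the Binomial distribution, for which Markov's inequality already suffices; no concentration arguments are required. The construction itself is transparent once one observes that the hypothesis $Lr \le \tfrac{1}{2}$ is exactly what is needed for the two shifted uniforms to remain admissible with the stated parameters $(L, R, r)$.
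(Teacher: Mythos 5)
Your proposal is correct and follows essentially the same two-point distinguishing strategy as the paper: pick two admissible distributions whose medians differ by (roughly) $2\alpha$, couple the product laws coordinate-by-coordinate so the Hamming distance is $\mathrm{Bin}(n,\Theta(L\alpha))$, and then use the group-privacy property of $\epsilon$-DP to show that an accurate $\mathcal{A}$ forces the Binomial parameter to be large. Where you differ is in the details. The paper's hard pair is a mixture of two point masses at $\pm(r+R)$ with a small uniform component on a width-$2r$ interval, and uses a small shift $2\alpha+\eta$ (sending $\eta\to 0$ at the end) to dodge an endpoint issue; your construction of two shifted uniforms $U[\pm\tfrac{1}{2L}]$ and $U[\pm\tfrac{1}{2L}+2\alpha]$ is cleaner, the admissibility check is immediate from $Lr\le\tfrac12$ and $\alpha\le R/2$, and your choice $S=(-\infty,\alpha]$ avoids the $\eta$ altogether. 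For the tail step the paper evaluates the Binomial MGF ($\mathbb{E}[e^{-\epsilon d_H}]\le\beta/(1-\beta)$ then $(1-p(1-e^{-\epsilon}))^n\le\beta/(1-\beta)$), which is a little more direct; you instead prove a threshold form of the coupling inequality $\Pr_{\mathcal{D}_0}[\mathcal{A}\in S]\le e^{\epsilon k}\Pr_{\mathcal{D}_1}[\mathcal{A}\in S]+\Pr_\pi[d_H>k]$ (your derivation of this is correct and is actually stated more carefully than the paper's, which implicitly pulls the accuracy factor out of an expectation that is not a priori a product) and finish with one Markov bound $\Pr[\mathrm{Bin}(n,p)>k]\le np/(k+1)$. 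Both give $n=\Omega(\log(1/\beta)/(\epsilon L\alpha))$ up to constants; your route is a bit lossier in constants but more transparent, and, as you note, the residual regime $\beta=\Theta(1)$ is absorbed since $\log(1/\beta)=\Theta(1)$ there.
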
 
The proof of this lower bound follows by a ``local'' argument. We construct two admissible distribution $\mathcal{D},\mathcal{D}'$ with medians of distance $O\left(\alpha\right)$ which we can couple so that drawing $X,X'$ two i.i.d. $n$-tuples from $\mathcal{D},\mathcal{D}'$ respectively, it holds $d_H\left(X,X'\right)=O(\alpha L n)$ with high probability. Using the definition of differential privacy we conclude that $\mathcal{A}(X),\mathcal{A}(X')$ assign the same probability to each event up to a multiplicative factor $e^{O(\epsilon \alpha L n)}.$ Yet, we prove that an algorithm contradicting the assumptions of Proposition \ref{propLB2} will imply that while $\mathcal{A}(X)$ assigns mass at least $1-\beta$ on the interval of width $2\alpha$ around $m(\mathcal{D})$, the algorithm  $\mathcal{A}(X')$ needs to assign mass at most $\beta.$  Hence combining the above, we conclude that it must be true that $e^{-\epsilon \alpha Ln}=O\left(\beta \right)$ or $n=\Omega\left(  \log \left(\frac{1}{\beta}\right)/(\epsilon L\alpha)\right).$

\begin{proposition}\label{propLB3}
Let $L>0, \epsilon \in (0,1), R, r>0$ with $Lr \leq \frac{1}{2}$ and $\alpha \in (0,\min\{r,R\}), \beta \in (0,\frac{1}{2})$. Suppose $n=o\left( \log \left(\frac{R}{ \alpha}+1\right)/(\epsilon Lr)\right).$ Then for any $\epsilon$-differentially private algorithm $\mathcal{A}$  there exists an admissible distribution $\mathcal{D}$ for this specific value of the parameters with the property $$\mathbb{P}_{X_1,X_2,\ldots,X_n \overset{iid}{\sim} \mathcal{D}} [ |\mathcal{A}(X)-m\left(\mathcal{D}\right)|  \geq \alpha] >\beta.$$

\end{proposition}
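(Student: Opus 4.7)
The plan is a packing-and-coupling lower bound. I will construct $M=\Theta(R/\alpha+1)$ admissible distributions with $2\alpha$-separated medians, build an explicit joint coupling of their sample vectors under which every pair has Hamming distance $O(Lrn)$ with high probability, and then invoke the group-privacy property of $\epsilon$-DP together with disjointness of the ``success'' sets to force the lower bound on $n$.

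\textbf{Step 1 (packing).} Let $M=\lfloor R/(2\alpha)\rfloor+1$ and set $m_i=-R+(2i+1)\alpha$ for $i\in\{0,\dots,M-1\}$, so the $m_i$'s lie in $[-R,R]$ and are pairwise $2\alpha$-separated. Fix an auxiliary $N>R+r$ and take $\mathcal{D}_i$ to be the mixture with uniform density $L$ on $[m_i-r,m_i+r]$ (total mass $2Lr\le 1$) together with two symmetric atoms of mass $(1-2Lr)/2$ each at $\pm N$. Each $\mathcal{D}_i$ is symmetric about $m_i$, so its median is $m_i$, and it satisfies Assumption~\ref{assumption:median-mass} with parameters $(L,R,r)$.

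\textbf{Step 2 (joint coupling).} Independently across $k\in[n]$, draw a bulk indicator $B_k\sim\mathrm{Ber}(2Lr)$ and a sign $S_k\sim\mathrm{Unif}\{-1,+1\}$ that are shared across the $M$ distributions. When $B_k=0$ set $X_k^{(i)}=S_kN$ for every $i$; when $B_k=1$ draw $X_k^{(i)}\sim\mathrm{Unif}[m_i-r,m_i+r]$ independently across $i$. The marginals are then exactly $X^{(i)}\sim\mathcal{D}_i^{\otimes n}$, and any two vectors $X^{(i)},X^{(j)}$ disagree only on coordinates with $B_k=1$, giving $d_H(X^{(i)},X^{(j)})\le\sum_k B_k\sim\mathrm{Bin}(n,2Lr)$. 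A Chernoff bound combined with a union bound over the $\binom{M}{2}$ pairs produces a value $D=O(Lrn+\log M)$ for which $\max_{i\neq j}d_H(X^{(i)},X^{(j)})\le D$ with probability at least $3/4$.

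\textbf{Step 3 (group privacy and packing inequality).} Let $B_i:=[m_i-\alpha,m_i+\alpha]$; these intervals are pairwise disjoint. Suppose toward a contradiction that $\mathcal{A}$ is $\alpha$-accurate on every $\mathcal{D}_i$, i.e.\ $\mathbb{P}[\mathcal{A}(X^{(i)})\in B_i]\ge 1-\beta$ for every $i$. Fix any $j$. Iterating Definition~\ref{definition:epsilon-privacy} under the coupling on the good event $\{d_H(X^{(i)},X^{(j)})\le D\}$ yields
\begin{equation*}
\mathbb{P}[\mathcal{A}(X^{(j)})\in B_i]\;\ge\;e^{-\epsilon D}\bigl(\mathbb{P}[\mathcal{A}(X^{(i)})\in B_i]-\mathbb{P}[d_H(X^{(i)},X^{(j)})>D]\bigr)\;\ge\;e^{-\epsilon D}(\tfrac{3}{4}-\beta).
\end{equation*}
Summing over $i$ and invoking disjointness forces $Me^{-\epsilon D}(\tfrac{3}{4}-\beta)\le 1$, so $\epsilon D\ge\log M-O(1)$. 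Substituting $D=O(Lrn+\log M)$ and absorbing the $\epsilon\log M$ term (valid for $\epsilon$ bounded away from $1$; the complementary constant-$\epsilon$ regime is harmless in $\Omega$-notation) rearranges to $n=\Omega(\log M/(\epsilon Lr))=\Omega(\log(R/\alpha+1)/(\epsilon Lr))$.

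The main obstacle is Step 2: the coupling must simultaneously keep \emph{every} pairwise Hamming distance of the order of $Lrn$, even though the medians span a window of length $\Theta(R)\gg r$. The two-atom reservoir at $\pm N$ is what makes this possible, by confining the between-distribution variation to the shared bulk of measure $2Lr$. A naive construction that placed independent samples across the whole line would incur pairwise Hamming distances of order $n$ and lose the crucial factor $Lr$ in the denominator of the final rate.
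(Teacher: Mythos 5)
Your overall strategy is the same one the paper uses: pack $\Theta(R/\alpha+1)$ admissible distributions with separated medians into $[-R,R]$, couple their sample vectors so that the relevant Hamming distance is stochastically dominated by a $\mathrm{Bin}(n,2Lr)$, transfer probability mass via the DP inequality, and then invoke disjointness to obtain $e^{\epsilon\cdot(\text{distance})}=\Omega(R/\alpha+1)$. The difference is in the quantitative bookkeeping: the paper couples $\mathcal{D}_1$ pairwise with each $\mathcal{D}_i$ and directly uses the moment generating function $\mathbb{E}[e^{-\epsilon d_H}]=\bigl(1-2Lr(1-e^{-\epsilon})\bigr)^n=\Omega(e^{-Lr\epsilon n})$, whereas you construct a joint coupling and pass through a high-probability threshold $D$. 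Both mechanisms are valid routes, and your joint coupling is a perfectly good idea; the MGF route just avoids having to choose $D$ and the attendant constant-chasing.

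However, there is a gap in your Step~2 that you then struggle to absorb in Step~3. You correctly observe that under the joint coupling every pairwise distance satisfies $d_H(X^{(i)},X^{(j)})\le\sum_k B_k$, and this bound is the \emph{same} random variable $\sum_k B_k$ for every pair. Consequently $\max_{i\neq j}d_H(X^{(i)},X^{(j)})\le\sum_k B_k$ deterministically and a \emph{single} tail bound on $\mathrm{Bin}(n,2Lr)$ gives $\Pr[\sum_k B_k>D]\le 1/4$ with $D=O(Lrn+1)$; there is no need for a union bound over $\binom{M}{2}$ pairs, and no $\log M$ term arises. By instead inserting the union bound you inflate $D$ to $O(Lrn+\log M)$, and then the final rearrangement $\epsilon D\ge\log M-O(1)$ produces $\epsilon C_1 Lrn\ge(1-\epsilon C_2)\log M-O(1)$, which is vacuous once $\epsilon C_2\ge 1$. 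Your parenthetical ``valid for $\epsilon$ bounded away from $1$; the complementary constant-$\epsilon$ regime is harmless'' does not repair this, because the proposition must hold for all $\epsilon\in(0,1)$ and the inequality genuinely degenerates as $\epsilon$ grows with $D$ containing a $C_2\log M$ term. The fix is simply to drop the union bound: with $D=O(Lrn+1)$ the term $\epsilon\cdot O(1)\le O(1)$ is absorbed into the $-O(1)$ already present, and $n=\Omega(\log M/(\epsilon Lr))$ follows cleanly for the whole range $\epsilon\in(0,1)$. Once that is repaired, your proof is correct and quantitatively equivalent to the paper's.
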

This lower bound follows by a ``global'' argument. We show that any $\epsilon$-differentially private algorithm contradicting the conclusion of Proposition \ref{propLB3} needs to assign at least $\Omega\left(e^{-Lrn}\right)$ probability mass in at least $\frac{R}{\alpha}+1$ distinct intervals in $[-R,R]$. Since the total probability mass must sum up to one we obtain the desired lower bound.

\section{A Polynomial-Time Implementation of the Rate-Optimal Estimator}
\label{section:Main-Algorithm}
\label{sec:algorithmic} 
In the previous section we constructed an differential private estimator denoted by $\mathcal{A}$ and proved that it is up-to-constants rate-optimal with respect to sampling complexity. In this section we report the following results. In the first subsection we prove that it can be efficiently implemented in worst-case polynomial time and average-case almost-linear time. Specifically, in the first subsection we sketch the high-level ideas to provide intuition. In the following subsections we offer the exact implementation details as well as the statements and their proofs regarding its computational complexity guarantees. 

\subsection{Algorithm outline and main result }


We start with a pseudocode for the general algorithm sampling $\mathcal{A}$ as defined in \Cref{alg:general} and its crucial property that it extends the simple ``flattened Laplacian" algorithm given by $\hat{\mathcal{A}}$ as defined in Equation \ref{restricted}.

\begin{algorithm}[H]
Sort $X$ s.t $X_1\le\ldots\le X_n$.\\
$m(X)\gets\textsc{LeftMedian}(X)$\\
Let $\mathcal{I}\equiv\myregion\equiv[-B,B]  $\\
Let $\begin{cases}
\mathcal{D}_{\text{restricted}}\sim \exp\left(-\frac{\epsilon}{4} \min\left\{ \mystar  \left|m(X)-\omega\right|,\mysquare \right\} \right), \omega \in \mathcal{I}\\
\mathcal{D}_{\text{general}}\sim \exp\left(\displaystyle\inf_{X' \in \mathcal{H}} \left[ \left(\frac{\epsilon}{2} d_H(X,X')\right) -\frac{\epsilon}{4} \min\left\{ \mystar  \left|m(X')-\omega\right|,\mysquare \right\} \right]\right), \omega \in \mathcal{I}\\
\end{cases}$\\
\If{$X\in \mathcal{H}$}
{
      $m \gets \text{Sample from }\mathcal{D}_{\text{restricted}}$ (Case (A))
}
\Else
{
      $m \gets \text{Sample from }\mathcal{D}_{\text{general}}$ (Case (B))
}
\Return $m$
\caption{$\textsc{PrivateMedian}(X=(X_1,\ldots,X_n))$:
This is the pseudocode for the general algorithm of sampling from the extended mechanism based on \Cref{alg:general}. The notation $D \sim p(\omega)$ is used to denote that the density of the distribution $D$ is proportional to the function $p(\omega)$.}
\label{alg:Estimator}
\end{algorithm}

We start with describing the high level ideas behind our implementation and its worst-case computational complexity guarantees. We then conclude with its average-time computational complexity guarantees as well.

First, notice that in the case of a data-set $X$ that belongs to the typical
set $\mathcal{H}$, our estimator corresponds to sampling from $\mathcal{D}_{\text{restricted}} -$ the ``flattened-Laplacian'' continuous distribution of $\hat{\mathcal{A}}(X')$ as defined in Equation \eqref{restricted}.
Therefore, the implementation of the algorithm with input from $\mathcal{H}$ corresponds to a simple two-phase protocol. In the first round we flip an appropriately biased coin to decide between the two regions where the estimator behaves differently: \emph{i)} the region where the estimator samples proportional to the Laplacian density and \emph{ii)} the region where the estimator samples proportional to the uniform distribution. In the second round we  simply apply conditional sampling either from a Laplacian or a uniform distribution, depending on the outcome of the coin flip.

Now, notice that we can sort $X$ in $O(n \log n)$-time, the query $X\in\mathcal{H}$ can be easily answered with counting binary-searches in $O(n\log n)$ time and, as we just discussed, it is easy to sample in $\operatorname{poly}(n)$-time from $\mathcal{D}_{\text{restricted}}.$  Therefore, to implement efficiently the extended mechanism $\mathcal{A}$, 
a $\operatorname{poly}(n)$-time sample generator  
is sufficient and necessary for the (appropriately normalized) distribution $\mathcal{D}_{\text{general}}$ (See \Cref{alg:general}) whose unnormalized density is given by:
\begin{equation*}
\textsc{UnNormalized}(X,\omega)=\exp\left(\displaystyle\inf_{X' \in \mathcal{H}} \left[ \left(\frac{\epsilon}{2} d_H(X,X')\right) -\frac{\epsilon}{4} \min\left\{ \mystar  \left|m(X')-\omega\right|,\mysquare \right\} \right]\right)
\end{equation*}
defined in $[-B,B]=\myregion$. Notice that such a task is a-priori challenging as even the evaluation of $\textsc{UnNormalized}(X,\omega)$ at a single instance of $\omega$ appears non-trivial because it involves a complicated optimization problem over $X' \in \mathcal{H}.$

Our main technical contribution is to show that this underlying constrained optimization problem associated with the definition of the density of $\mathcal{D}_{\text{general}}$ can be solved in polynomial-time for every fixed $\omega$ and then show how to use it to obtain a polynomial-time sample generator for the desired distribution. A key observation is that $\textsc{UnNormalized}(X,\omega)$ depends only on the possible values of the median $m(X')$ and the distance $d_H(X,X'),$ for any data-set $X'\in \mathcal{H}$. 
To simplify our notation, we introduce in our analysis the notion of the $\textsc{TypicalHamming}$ distance of a data-set $X$ and the typical set $\mathcal{H}$, which we prove that it can be computed in $\operatorname{poly}(n)$-time. 

\begin{lemma}\label{definition:TypicalHamming}
Let us define $\textsc{TypicalHamming}(X,\xi)$ as:
\[\textsc{TypicalHamming}(X,\xi)=\displaystyle\min_{\begin{matrix}X'\in\mathcal{H},m(X')=\xi
\\\xi\in[-R-r/2,R+r/2]\end{matrix}
} d_H(X,X').\] Then for any data-set $X$, \textsc{TypicalHamming}$(X,\cdot)$ is piece-wise constant function in $[-R-r/2,R+r/2]$ with at most $\operatorname{poly}(n)$ changes.
Additionally, for any $\xi$ and data-set $X$, $\textsc{TypicalHamming}(X,\xi)$ can be computed exactly using the $\operatorname{poly}(n)$
\footnote{For the interested reader, both the number of changes is at most $3n^2$ and the  time complexity is $O(n^4)$, but we will keep for simplicity the general expression $\operatorname{poly}(n)$ in our formal statement}-time algorithm described in Algorithm \ref{alg:typicalHamming}.
\end{lemma}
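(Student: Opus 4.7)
The plan is to reformulate $\textsc{TypicalHamming}(X,\xi)$ as a discrete optimization problem, solve it via a greedy procedure, and analyze how its defining data vary with $\xi$ to obtain the piece-wise constant structure. I would first recast the minimization over $X'$ as the maximization of a ``kept set'' $K \subseteq [n]$ with $X'_i = X_i$ for $i \in K$: the answer is $n - |K^\star|$, where $K^\star$ is the largest $K$ such that the free coordinates $\{X'_i\}_{i \notin K}$ can be chosen so that $X' \in \mathcal{H}$ and $m(X') = \xi$. A key simplification is that, without loss of generality, every free coordinate may be placed at the value $\xi$: such a point belongs to every shell $[\xi - \kappa C/(Ln), \xi + \kappa C/(Ln)]$, contributes to neither $|\{i : X'_i < \xi\}|$ nor $|\{i : X'_i > \xi\}|$, and so only strengthens every constraint. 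The problem then reduces to picking $K$ subject to (i) the one-sided median constraints $|\{i \in K : X_i < \xi\}| \leq \lceil n/2 \rceil - 1$ and $|\{i \in K : X_i > \xi\}| \leq \lfloor n/2 \rfloor$, and (ii) for each $\kappa \in \{1, \ldots, Lnr/(2C)\}$, the shell constraints $|\{i \in K : X_i \in [\xi, \xi + \kappa C/(Ln)]\}| + (n - |K|) \geq \kappa + 1$ and its symmetric counterpart on the left.

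To compute $\textsc{TypicalHamming}(X,\xi)$ for a fixed $\xi$, I propose a greedy algorithm. After sorting $X$ in $O(n \log n)$ time, one computes the side counts $a,b,c$ and the shell counts $N^{\pm}_\kappa$ in $O(n \kappa_{\max})$ time. Starting from the hypothesis that all points are kept, I identify the minimum number of forced drops: first to meet the two one-sided median constraints by dropping, on the violating side, the points farthest from $\xi$ (which are useless for the shell counts), and then, if some shell count is still short, to produce additional free coordinates at $\xi$ that uniformly raise every shell count on both sides. Since each drop has a quantifiable effect on one median bound and bumps all shell counts by one, a linear search over the number of drops (at most $n$) certifies the minimum feasible value in $\operatorname{poly}(n)$ total time.

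For the piece-wise constant claim, observe that the data $(a(\xi), b(\xi), c(\xi), N^{\pm}_1(\xi), \ldots, N^{\pm}_{\kappa_{\max}}(\xi))$ that fully determines the above optimization changes only when $\xi$ crosses a value in the finite set $\mathcal{B} := \{X_i\}_{i \in [n]} \cup \{X_i + k C/(Ln) : i \in [n], \, k \in \{-\kappa_{\max}, \ldots, \kappa_{\max}\}\}$, whose cardinality is $O(n \kappa_{\max}) = O(n^2)$ since $\kappa_{\max} \leq n/(4C)$. Between two consecutive breakpoints the input to the optimization is identical, and hence so is the value of $\textsc{TypicalHamming}(X,\xi)$, giving the claimed $\operatorname{poly}(n)$ bound on the number of pieces. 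The main technical obstacle lies in the coupling between the median and shell constraints when a forced drop has to come from the concentration region: dropping a point inside the left concentration interval can reduce some shell count $N^{-}_\kappa$, which may in turn require additional drops to restore shell feasibility, and the proof of optimality of the greedy rule must show that prioritizing drops in the order ``farthest outside the concentration region first, then by decreasing distance within it'' is globally optimal and not just a locally attractive choice.
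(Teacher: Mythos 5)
Your proposal follows essentially the same route as the paper: reformulate the problem over a ``kept set'' (equivalently, set every modified coordinate to $\xi$), drop points greedily farthest-from-$\xi$ first on the violating side, and obtain piece-wise constancy from the $O(n^2)$ anchor points $\{X_i + kC/(Ln)\}$, between which all indicator counts feeding the optimization are unchanged. This matches the paper's Algorithm for \textsc{TypicalHamming} and its two claims (greedy correctness via an exchange argument, and the anchor-point partition) almost exactly.

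The one place you stop short --- the ``main technical obstacle'' of coupling between median and shell constraints --- is in fact milder than you fear, and resolving it is precisely the paper's short exchange argument. In your kept-set formulation the shell constraint reads $|\{i \in K : X_i \in \text{shell}\}| + (n - |K|) \ge \kappa + 1$, so dropping a point that lies \emph{inside} a shell decreases the first term by one but increases the second by one: the re-placed point at $\xi$ belongs to every shell, so no shell count ever decreases under any drop. Hence every drop weakly helps every shell constraint, the constraints are nested around $\xi$, and the only question is which side and which point to drop for the median constraint --- for which farthest-first is clearly dominant since the farthest point contributes to the fewest shells. This observation turns your ``locally attractive'' greedy into a one-line dominance/exchange argument, closing the gap you flagged.
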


Leveraging the partition of the interval $[-B,B]$ implied by the different constant parts of \Cref{definition:TypicalHamming} we are able to show the following for $\textsc{UnNormalized}(X,\omega)$:
\begin{lemma}\label{lemma:algo:concatenation-laplacian}
For any given data-set $X$, there is a partition of $[-B,B]$ to a collection $\mathbb{J}$ of $\operatorname{poly}(n)$ consecutive intervals, $\mathbb{J}=\{ J_1,\cdots,J_{\operatorname{poly}(n)} \}$ such that :
\begin{equation*} 
 \text{For any $J_i \in \mathbb{J}\quad :\textsc{UnNormalized}(X,\omega)=
 \exp\left(\alpha_i \omega+\beta_i\right) \quad \forall \omega\in J_i$}
 \end{equation*}
 Moreover, we can compute exactly $\mathbb{J}$ and the constants $(\alpha_i,\beta_i)$ for every $J_i\in\mathbb{J}$ in $\operatorname{poly}(n)$ time.
\end{lemma}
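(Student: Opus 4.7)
The plan is to reduce the infimum defining $\textsc{UnNormalized}(X,\omega)$ to an optimization over the single parameter $\xi = m(X')$, exploit the piecewise-constant structure granted by Lemma~\ref{definition:TypicalHamming}, and then recognize the resulting function of $\omega$ as a lower envelope of $\operatorname{poly}(n)$ simple piecewise-linear functions. Concretely, since the integrand in the definition of $\textsc{UnNormalized}(X,\omega)$ depends on $X'$ only through the pair $(d_H(X,X'), m(X'))$, I would first rewrite
\[
\log\textsc{UnNormalized}(X,\omega) \;=\; \inf_{\xi \in [-R-r/2,\, R+r/2]}\Bigl[\tfrac{\epsilon}{2}\,\textsc{TypicalHamming}(X,\xi) \;-\; \tfrac{\epsilon}{4}\min\bigl\{\tfrac{Ln}{3C}|\xi-\omega|,\,Lrn\bigr\}\Bigr],
\]
using Lemma~\ref{definition:TypicalHamming} to identify the inner minimum over $X' \in \mathcal{H}$ with $m(X')=\xi$.

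By Lemma~\ref{definition:TypicalHamming}, $\xi \mapsto \textsc{TypicalHamming}(X,\xi)$ is piecewise constant on $[-R-r/2, R+r/2]$ with a collection $\{[\xi_k,\xi_{k+1}]\}_{k=1}^{K}$ of $K=\operatorname{poly}(n)$ pieces, each carrying a constant value $h_k$, and this partition and the values $h_k$ can be computed in $\operatorname{poly}(n)$ time. I would then split the infimum piece by piece: on piece $k$, the term $\tfrac{\epsilon h_k}{2}$ is constant, so minimization over $\xi \in [\xi_k,\xi_{k+1}]$ reduces to \emph{maximizing} $\min\{\tfrac{Ln}{3C}|\xi-\omega|,\,Lrn\}$. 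Because $|\xi-\omega|$ is convex in $\xi$, its maximum on $[\xi_k,\xi_{k+1}]$ is attained at one of the endpoints, so it equals $d_k(\omega):=\max(|\xi_k-\omega|,\,|\xi_{k+1}-\omega|)$, a piecewise-linear function of $\omega$ with $O(1)$ breakpoints (at $\xi_k$, $\xi_{k+1}$, and the midpoint $(\xi_k+\xi_{k+1})/2$). Composing with the cap $\min\{\cdot,\,Lrn\}$ introduces $O(1)$ more breakpoints, so the per-piece minimum
\[
g_k(\omega) \;:=\; \tfrac{\epsilon h_k}{2} \;-\; \tfrac{\epsilon}{4}\,\min\!\bigl\{\tfrac{Ln}{3C}\,d_k(\omega),\,Lrn\bigr\}
\]
is a piecewise-linear function of $\omega$ on $[-B,B]$ with $O(1)$ pieces, and its description can be written down explicitly from the quadruple $(h_k,\xi_k,\xi_{k+1},\omega)$.

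Finally, I would set $g(\omega) := \min_{k \in [K]} g_k(\omega)$, which equals $\log\textsc{UnNormalized}(X,\omega)$. Since $g$ is the lower envelope of $K=\operatorname{poly}(n)$ piecewise-linear functions each with $O(1)$ pieces, standard sweep-line lower-envelope algorithms compute it in $\operatorname{poly}(n)$ time and produce a partition $\mathbb{J}=\{J_1,\ldots,J_M\}$ with $M=\operatorname{poly}(n)$ on which $g$ is affine, say $g(\omega)=\alpha_i\omega+\beta_i$ for $\omega \in J_i$; exponentiating gives the claimed factorization $\textsc{UnNormalized}(X,\omega)=\exp(\alpha_i\omega+\beta_i)$ on each $J_i$. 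The only subtlety I anticipate is the reduction to optimizing over $\xi$ alone: one must verify that $\textsc{TypicalHamming}(X,\xi)$ is actually attained by some $X' \in \mathcal{H}$ with $m(X')=\xi$ (so that the infimum over $X'$ genuinely factors through $\xi$), which follows from the definition of $\textsc{TypicalHamming}$ as a minimum (rather than infimum) on a finite domain of possible medians induced by the samples, as treated in Lemma~\ref{definition:TypicalHamming} and its accompanying algorithm. Everything else is bookkeeping: tracking the $O(1)$ breakpoints contributed by each piece $k$ and running a standard lower-envelope computation.
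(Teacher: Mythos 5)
Your proposal is correct and takes essentially the same route as the paper: both reduce the infimum over $X'\in\mathcal{H}$ to a one-dimensional optimization over $\xi=m(X')$ via $\textsc{TypicalHamming}$, exploit its piecewise-constant structure, observe that on each piece the inner maximization of $|\xi-\omega|$ occurs at endpoints, and then compute the lower envelope of a polynomial family of piecewise-linear-with-$O(1)$-pieces functions of $\omega$. The only difference is cosmetic: the paper groups intervals by their $\textsc{TypicalHamming}$ value $k$, collects them into sets $\mathbb{I}_k$ with extreme points $\xi_{k,\inf},\xi_{k,\sup}$, and forms $n$ functions $h_k$ (each with at most four linear pieces), whereas you index the functions $g_k$ directly by the $O(n^2)$ raw pieces of the partition; both yield a $\operatorname{poly}(n)$-time envelope computation. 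One small side remark: the "subtlety'' you flag at the end is not actually needed — swapping $\inf_{X'\in\mathcal{H}}$ into $\inf_\xi\,\inf_{X'\in\mathcal{H},\,m(X')=\xi}$ is just interchanging the order of infima and does not require the inner infimum to be attained; the reduction to optimizing over $\xi$ alone holds automatically.
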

In words, \Cref{lemma:algo:concatenation-laplacian} implies that our extended mechanism consists a concatenation of $\operatorname{poly}(n)$  exponential or uniform distributions.
Therefore, its implementation corresponds again to a simple two-phase protocol, similar to the implementation of the much simpler $\mathcal{D}_{\text{restricted}}$. In the first round we  sample from a discrete distribution on a $\operatorname{poly}(n)$-cardinality domain  to decide among the regions $J_1,\cdots,J_{\operatorname{poly}(n)} $. In the second round we  simply apply conditional sampling from the corresponding either exponential or uniform  which will
be truncated on the region which was the outcome of the first round. Notice that all our decomposition results apply to any input vector $X$. Combining the above, we show that a \textit{worst-case} $\operatorname{poly}(n)$-implementation of $\textsc{PrivateMedian}$ is possible via the aforementioned decomposition of $\mathcal{D}_{\text{general}}$. 

Now, a potentially interesting remark is in order. Recall that the typical set $\mathcal{H}$ as defined in \eqref{equation:sensitivity-set-median}, which is used to define the algorithm $\mathcal{A}$, is a function of the parameter $C>1$. Furthermore, recall that based on Theorem \ref{thm:upper_bound}, the algorithm $\mathcal{A}$ with $C>0$ being a sufficiently large constant is proven to achieve, up-to-constants, the optimal sample complexity. Now, importantly, our implementation of $\mathcal{A}$ remains \textit{worst-case polynomial-time for any choice of $C,$ possibly scaling with $n$,} since such a change affects only the arithmetic operations which are assumed to be executed always in $O(1)$-time. Using this observation, we can show that by tweaking appropriately this parameter $C$ from an absolute constant $\Theta(1)$ to scaling logarithmically-in-$n$ factor $\Theta(\log n)$ that the average-case time complexity of the algorithm becomes up-to-logarithms linear. Furthermore, we can maintain for the algorithm for this choice of $C=\Theta( \log n)$ an, up-to-logarithms this time, optimal sample complexity guarantee.  

More precisely, combining all the above, the following theorem holds, which slightly generalizes Theorem \ref{thm:upper_bound}, as it allows $C$ to scale with $n$.
\begin{theorem}\label{thm:upp_gen}
Suppose $C=C_n>0$, possibly scaling with $n$, is bigger than a sufficiently large constant, $\epsilon \in (0,1)$, $\mathcal{D}$ is an admissible distribution. 
Then for any $\alpha \in (0,r)$ and $ \beta \in (0,1)$, there exists an $\epsilon$-differentially~private implementation of 
\textsc{PrivateMedian} such that if it holds  $$ n=O\left( \frac{\log \left( \frac{1}{\beta}\right)}{L^2 \alpha^2}+C_n\frac{ \log \left(\frac{1}{\beta}\right)}{\epsilon L\alpha }+\frac{\log \left(\frac{R}{ \alpha}+1\right)}{\epsilon Lr}\right),$$  it produces an estimate $\hat m$ that satisfies $|m(\mathcal{D}) - \hat m| \le \alpha$ with probability $1-\beta$ and runs 
\begin{enumerate}
    \item[(i)] at most in $\operatorname{poly}(n)$ time ,  for some sufficiently large constant $C>0$.
    \item[(ii)] on expectation in $\tilde{O}(n)$ time, for some sufficiently large constant $C_0>0$ and $C=C_0 \log n$.
\end{enumerate}
\end{theorem}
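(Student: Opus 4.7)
The plan is to prove the three deliverables of Theorem \ref{thm:upp_gen} in sequence: the accuracy bound for general $C=C_n$, the worst-case $\operatorname{poly}(n)$-time implementation, and the expected $\tilde{O}(n)$ runtime when $C=C_0\log n$. The $\epsilon$-differential privacy guarantee is inherited directly from the Extension Lemma combined with Lemma \ref{lem:restricted-DP} and does not interact with the choice of $C$, so no new work is required for the privacy side.

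For accuracy, I would revisit the proof of Theorem \ref{thm:upper_bound} and carefully track where the parameter $C$ enters the construction. It appears in two places: the Laplace rate $Ln/(3C)$ in \eqref{restricted} and the interval widths $\kappa C/(Ln)$ used to define $\mathcal{H}$ in \eqref{equation:sensitivity-set-median}. In Lemma \ref{lemma:accuracy-typical0}, requiring the flattened Laplacian to concentrate within $\alpha$ of $m(X)$ with probability $1-\beta$ forces $\tfrac{\epsilon Ln}{12 C_n}\cdot\alpha \gtrsim \log(1/\beta)$, which is precisely what produces the $C_n$ multiplier in the middle term of the stated sample complexity. The remaining contributions come from Lemma \ref{lemma:typical-set-median} and the concentration of the empirical median around $m(\mathcal{D})$; neither of these depends on $C$ beyond requiring it to exceed an absolute threshold, so no further $C_n$-inflation propagates to the other terms.

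For (i), I would argue that Algorithm \ref{alg:Estimator} runs in $\operatorname{poly}(n)$ time for \emph{every} value of $C$. Sorting $X$ costs $O(n\log n)$; the test $X\in\mathcal{H}$ reduces to a linear sweep over the sorted data that simultaneously evaluates all counting constraints. On the fast branch $X\in\mathcal{H}$, sampling from $\mathcal{D}_{\text{restricted}}$ is a two-phase procedure (decide between flat tails and Laplacian core using the closed-form masses, then conditional sampling) with $O(1)$ arithmetic operations. On the slow branch $X\notin\mathcal{H}$, Lemma \ref{lemma:algo:concatenation-laplacian} produces in $\operatorname{poly}(n)$ time a partition $\mathbb{J}$ of $[-B,B]$ on which the unnormalized density is piecewise of the form $\exp(\alpha_i\omega+\beta_i)$; each piece's mass admits a closed form, so I normalize over $\mathbb{J}$, draw an index, and sample from a truncated exponential or uniform on that piece. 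The value of $C$ enters only through arithmetic constants in these formulas, not through the size of $\mathbb{J}$ or the number of operations, so the runtime remains $\operatorname{poly}(n)$ uniformly in $C$.

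For (ii), the key observation is that with $C=C_0\log n$ and $C_0$ a sufficiently large constant, $\mathbb{P}[X\in\mathcal{H}]\ge 1-n^{-\Omega(C_0)}$ for every admissible $\mathcal{D}$. Each counting constraint in \eqref{equation:sensitivity-set-median} at level $\kappa$ has expected count at least $C\kappa$ under the admissibility lower bound on the density, so a multiplicative Chernoff bound controls the probability of it falling below $\kappa+1 \le C\kappa/2$ by $e^{-\Omega(C\kappa)}\le n^{-\Omega(C_0)}$; the range constraint on $m(X)$ holds with even better probability. A union bound over the $O(n)$ constraints yields the claim. Conditioned on $X\in\mathcal{H}$, the algorithm takes the fast branch and runs in $\tilde{O}(n)$ time; conditioned on $X\notin\mathcal{H}$, it falls back to the $\operatorname{poly}(n)$ procedure from (i), contributing at most $n^{-\Omega(C_0)}\cdot\operatorname{poly}(n)=o(1)$ to the expected runtime once $C_0$ is large enough. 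The main obstacle I anticipate is balancing the constants in this step: $C_0$ must be chosen large enough that the slow-path probability decays faster than the slow-path cost grows, while the resulting $\log n$ inflation of the middle sample-complexity term must remain absorbed into the $C_n$ factor advertised in the statement. Once this balance is struck, combining the three ingredients yields Theorem \ref{thm:upp_gen}.
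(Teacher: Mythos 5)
Your overall architecture matches the paper's: the accuracy claim comes from tracking the parameter $C$ through the proof of Theorem \ref{thm:upper_bound} (the paper's Theorem \ref{thm:upper_bound0}), part (i) comes from the decomposition Lemmas \ref{lem:restr_time} and \ref{lemma:algo:concatenation-laplacian} whose piece counts are independent of $C$, and part (ii) is a law-of-total-expectation argument weighing a fast branch against a rare slow branch. Your accuracy and part (i) discussions are essentially the paper's proof. However, there is a genuine gap in your part (ii), in the claim that $\mathbb{P}[X\in\mathcal{H}]\ge 1-n^{-\Omega(C_0)}$ for \emph{every} admissible $\mathcal{D}$ via a direct multiplicative Chernoff bound on each counting constraint.

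Two things go wrong with that claim as stated. First, the counting constraints in \eqref{equation:sensitivity-set-median} are anchored at the \emph{empirical} median $m(X)$, which is a function of all $n$ samples, so the indicators $\mathbf{1}\{X_i-m(X)\in[0,\kappa C/(Ln)]\}$ are neither independent nor do they have the marginal success probability $C\kappa/n$ you implicitly use: the density lower bound $L$ only holds on $[m(\mathcal{D})-r,m(\mathcal{D})+r]$, and the interval $[m(X),m(X)+\kappa C/(Ln)]$ need not lie inside it. The paper resolves this in Lemma \ref{lemma:typical_set_general} by first conditioning on $|m(X)-m(\mathcal{D})|\le r/2$, then re-describing the sample as a mixture and bounding the \emph{conditional} density of the points on each side of $m(X)$ (display \eqref{eq:cond_density}) before applying Chernoff; in your write-up you should either reproduce that conditioning or simply invoke Lemma \ref{lemma:typical_set_general} with $T=1$, as the paper does. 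Second, and consequently, the correct bound is of the form $\mathbb{P}[X\notin\mathcal{H}]=O\bigl(C_0\log n\,\exp(-C_0\log n/8)+e^{-\Theta(L^2r^2n)}\bigr)$: the second term, coming from the event that $m(X)$ drifts outside $[-R-r/2,R+r/2]$ (and from the conditioning above), is \emph{not} $n^{-\Omega(C_0)}$ and cannot be beaten by choosing $C_0$ large. To make $\mathbb{P}[X\notin\mathcal{H}]\cdot O(n^M)=O(1)$ one therefore needs an additional lower bound on $n$ of order $\frac{1}{Lr}\log\frac{1}{Lr}$ (up to constants), which the paper states explicitly as a hypothesis of its average-case theorem and justifies as being implied by the third term of the sample-complexity bound. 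Your proposal omits this condition entirely, and without it the claimed unconditional $n^{-\Omega(C_0)}$ failure probability is false.
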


The following clarification remark is in order.
\begin{remark}
Using the elementary property that for two diverging sequences $A_n,B_n, n \in \mathbb{N}$, it holds $A_n/\log A_n = \Omega( B_n)$ if and only if $A_n=\Omega(B_n \log B_n)$, the $\epsilon$-differentially~private implementation of 
\textsc{PrivateMedian} mentioned in Theorem \ref{thm:upp_gen} in the case $C=C_0 \log n$ for some sufficiently large constant $C_0>0$, requires a sample size of order $$ n=\tilde{O}\left( \frac{\log \left( \frac{1}{\beta}\right)}{L^2 \alpha^2}+\frac{ \log \left(\frac{1}{\beta}\right)}{\epsilon L\alpha }+\frac{\log \left(\frac{R}{ \alpha}+1\right)}{\epsilon Lr}\right)$$to output an estimate $\hat m$ that satisfies $|m(\mathcal{D}) - \hat m| \le \alpha$ with probability $1-\beta$.
\end{remark}


\subsection{Implementation of \textsc{PrivateMedian} and Time Complexity}
 In this section we prove Theorem \ref{thm:upp_gen}. In \Cref{appendix:worst-case}, we focus on the, arguably pessimistic, model of worst-case analysis, and we show that we can implement the algorithm $\mathcal{A}$ at most in $\operatorname{poly}(n)$ time. The worst-case complexity guarantee of our implementation hold under arbitrary $C=C_n>0$ bigger than a sufficiently large constant. Notice that combining this time-complexity guarantee with the sample complexity results from Theorem \ref{thm:upper_bound} we can conclude the part (i) of Theorem \ref{thm:upp_gen}.  We then proceed with average-case analysis presented in \Cref{appendix:average-case} where we assume that $C$ is scaling logarithmically with $n$. We show that this sacrifices the optimality of our sample complexity just up to a logarithmic factors and in that case our implementation actually runs on expectation in $\tilde{O}(n)$ time.

\emph{Note:For our worst-case and average-case analysis, we will assume that there is an oracle $\mathcal{O}$ such that 
we can always sample from Bernoulli, Uniform, Laplace, (Negative) Exponential Distribution and their truncated versions in an interval $I$ in $O(1)$ time. The exact details are in \Cref{appendix:truncated-sampling}.}
\subsubsection{Worst-case Time Complexity of \textsc{PrivateMedian}  }\label{appendix:worst-case}
We start by presenting a simple random generator for $\mathcal{D}_{\text{restricted}}$ (Case (A) of Algorithm \ref{alg:Estimator}, $X \in \mathcal{H}$.):
\begin{lemma}\label{lem:restr_time}
For a given a date-set $X=(X_1,\cdots,X_n) \in \mathcal{H}$ and its median $m(X)$, there exists a $O(1)-$protocol that generates a sample from $\mathcal{D}_{\text{restricted}}$.
\end{lemma}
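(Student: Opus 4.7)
The plan is to exploit the explicit piecewise structure of the density in Equation \eqref{restricted} and reduce the problem to three independent $O(1)$-time samples from the oracle $\mathcal{O}$ (a categorical choice among three regions, followed by sampling from either a truncated Laplacian or a uniform distribution).

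First, I would observe that the density of $\mathcal{D}_{\text{restricted}}$ splits naturally into three pieces by identifying the breakpoint of the minimum in the exponent: namely, $\frac{Ln}{3C}|m(X)-\omega| = Lrn$ occurs exactly at $|m(X)-\omega| = 3Cr$. Setting $I_{\mathrm{mid}} = [m(X)-3Cr,\, m(X)+3Cr]$, $I_{\mathrm{left}} = [-R-4Cr,\, m(X)-3Cr]$, $I_{\mathrm{right}} = [m(X)+3Cr,\, R+4Cr]$, I would check (using $C > 1/2$ and the constraint $m(X) \in [-R-r/2, R+r/2]$ encoded in $\mathcal{H}$) that $I_{\mathrm{left}}, I_{\mathrm{right}}$ are non-empty sub-intervals of $\mathcal{I}$ and that $\mathcal{I} = I_{\mathrm{left}} \cup I_{\mathrm{mid}} \cup I_{\mathrm{right}}$. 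On $I_{\mathrm{mid}}$ the unnormalized density is a centered Laplacian $\exp\!\bigl(-\tfrac{\epsilon Ln}{12C}|m(X)-\omega|\bigr)$, and on $I_{\mathrm{left}} \cup I_{\mathrm{right}}$ it is the constant $\exp\!\bigl(-\tfrac{\epsilon Lrn}{4}\bigr)$ (hence uniform).

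Second, I would compute the three region masses in closed form. The central mass equals $\int_{-3Cr}^{3Cr} e^{-\frac{\epsilon Ln}{12C}|u|}\,du = \frac{24C}{\epsilon Ln}\bigl(1 - e^{-\frac{\epsilon Lrn}{4}}\bigr)$, while the left (resp.\ right) mass equals $|I_{\mathrm{left}}| \cdot e^{-\frac{\epsilon Lrn}{4}}$ (resp.\ $|I_{\mathrm{right}}| \cdot e^{-\frac{\epsilon Lrn}{4}}$). Each of these is an $O(1)$ arithmetic computation, and summing them yields $\hat Z$; dividing gives the three categorical probabilities $p_{\mathrm{left}}, p_{\mathrm{mid}}, p_{\mathrm{right}}$.

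The sampling protocol is then a two-step procedure: (i) use the oracle $\mathcal{O}$ to draw a categorical label in $\{\mathrm{left}, \mathrm{mid}, \mathrm{right}\}$ with the probabilities just computed (a single $O(1)$ call); (ii) conditional on the outcome, call $\mathcal{O}$ once more to sample either uniformly on $I_{\mathrm{left}}$ or $I_{\mathrm{right}}$, or from the symmetric Laplacian with rate $\frac{\epsilon Ln}{12C}$ centered at $m(X)$ and truncated to $I_{\mathrm{mid}}$. Each of these is explicitly listed among the oracle's $O(1)$-time primitives (see \Cref{appendix:truncated-sampling}), so the overall cost is $O(1)$. The main (and only) technical point is verifying the endpoint bookkeeping so that the three intervals actually tile $\mathcal{I}$ given the constraint $m(X) \in [-R-r/2, R+r/2]$; this is straightforward but must be done with care because the Laplacian truncation radius $3Cr$ can exceed $r/2$ while still fitting inside the outer buffer $4Cr$.
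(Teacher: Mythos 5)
Your proposal is correct and follows essentially the same route as the paper: the same three-interval decomposition of $\mathcal{I}$ at the breakpoints $m(X)\pm 3Cr$, the same identification of the middle piece as a truncated Laplacian with scale $\tfrac{12C}{\epsilon Ln}$ and the outer pieces as uniform, and the same two-phase protocol (trinary coin on the region masses followed by conditional sampling via the oracle). The only difference is that you write out the region masses in closed form and verify the containment $I_{\mathrm{mid}}\subseteq\mathcal{I}$ explicitly, which the paper leaves implicit.
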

\begin{proof}

\begin{figure}[h!]
    \centering
\begin{tikzpicture}[scale=0.7]
\def\normaltwo{\x,{5/exp(min(abs(\x-3),2.5)/2)}}
\def\uniform{\x,{5/exp(2.5/2)}}

\def\xmax{8}
\def\xcen{3}
\def\xmin{-8}
\def\xlapmin{0.5}
\def\xlapmax{5.5}

\def\fmax{5/exp(min(abs(\xmax-3),2.5)/2)}
\def\fmin{5/exp(min(abs(\xmin-3),2.5)/2)}
\def\fcen{5/exp(min(abs(\xcen-3),2.5)/2)}
\def\flapmin{5/exp(min(abs(\xlapmin-3),2.5)/2)}
\def\flapmax{5/exp(min(abs(\xlapmax-3),2.5)/2)}

\fill[fill=blue!60,opacity=0.4,samples=100] (\xmin,0) -- plot[domain=\xmin:\xmax] (\normaltwo) -- ({\xmax},0) -- cycle;

\draw[thick,color=blue,domain=\xmin:\xmax,samples=100] plot (\normaltwo) node[right] {};

\draw[dashed] ({\xmax},{\fmax}) -- ({\xmax},0) node[below] {$B$};
\draw[dashed] ({\xmin},{\fmin}) -- ({\xmin},0) node[below] {$-B$};
\draw[dashed] ({\xcen},{\fcen}) -- ({\xcen},0) node[below] {$m(X)$};
\draw[dashed] ({\xlapmin},{\flapmin}) -- ({\xlapmin},0) node[below] {$m(X)-3Cr$};
\draw[dashed] ({\xlapmax},{\flapmax}) -- ({\xlapmax},0) node[below] {$m(X)+3Cr$};

\draw ({(\xmin +\xlapmin)/2},1) node[below] {$I_{\text{left}}$};
\draw ({(\xmax +\xlapmax)/2},1) node[below] {$I_{\text{right}}$};
\draw ({\xcen},1) node[below] {$I_{\text{center}}$};


\draw[->,thick] (0,0) -- (\xmax+1,0) node[right] {};
\draw[->,thick] (0,0) -- (\xmin-1,0) node[left] {};
\draw[->,thick] (0,0) -- (0,5) node[above] {};
\end{tikzpicture}
\caption*{``The flattened''-Laplacian Mechanism: $ \mathcal{D}_{\text{restricted}} \sim  \exp\left(-\frac{\epsilon}{4} \min\left\{ \mystar  \left|m(X)-\omega\right|,\mysquare \right\} \right)$}
\end{figure}
We define three intervals \footnote{One or two of them may equal to $\emptyset$.} which partition the support $[-B,B]$ of $\mathcal{D}_{\text{restricted}}$, with the property that when $\mathcal{D}_{\text{restricted}}$ is restricted on any of them, it reduces to a (truncated) standard distribution from which we assume we have oracle access to sample from. Specifically let us define \emph{i)}
$I_{\text{left}}=[-B,m(X)-3Cr]$, \emph{ii)} $I_{\text{center}}=[m(X)-3Cr,m(X)+3Cr]$ and \emph{iii)} $I_{\text{right}}=[m(X)+3Cr,B]$.
It is immediate to see that restricted in the interval $I_{\text{center}}$ $\mathcal{D}_{\text{restricted}}$ corresponds a (truncated) Laplacian distribution and in $I_{\text{left}},I_{\text{right}}$
it corresponds to a uniform distribution. 

Given this observation, in Algorithm \ref{alg:RestrictedEstimator} we present the pseudo-code of an evidently $O(1)$-time sample generator for $\mathcal{D}_{\text{restricted}}.$
\begin{algorithm}[h!]
Let $I_{\text{left}},I_{\text{center}},I_{\text{right}}$ be the decomposition of $\mathcal{I}=[-B,B]$ as described in the above figure\\
//\texttt{ For extreme values of $C,r,m(X)$,}\\//\texttt{ either $I_{\text{left}},I_{\text{center}}$ or $I_{\text{right}}$ may equal to $\emptyset$ }\\
Let $\begin{Bmatrix}p_{\text{center}}&=&\displaystyle\int_{I_{\text{center}}} \exp\left(-\frac{\epsilon}{4}  \mystar  \left|m(X)-\omega\right|  \right)\mathrm{d}\omega\\
 p_{\text{left}}&=&\displaystyle\int_{I_{\text{left}}} \exp\left(-\frac{\epsilon}{4} \mysquare  \right)\mathrm{d}\omega\\
 p_{\text{right}}&=&\displaystyle\int_{I_{\text{right}}} \exp\left(-\frac{\epsilon}{4} \mysquare  \right)\mathrm{d}\omega
 \end{Bmatrix}$\\
Let $p=p_{\text{left}}+p_{\text{center}}+p_{\text{right}}$\\
Toss a trinary coin $c:=\{\mathcal{L,C,R}\}$ with probability $(\tfrac{p_{\text{left}}}{p},\tfrac{p_{\text{center}}}{p},\tfrac{p_{\text{right}}}{p})$, correspondingly.\\
\If{$c$ outputs $\mathcal{L}$}
{
      $s \gets \text{Sample from }\textsc{Uniform}[I=I_{\text{left}}]$ 
}
\If{$c$ outputs $\mathcal{C}$}
{
      $s \gets \text{Sample from }\textsc{TruncatedLaplace}[\mu=m(X),\sigma=\frac{12C}{Ln\epsilon},I=I_{\text{center}} ]$ 
}
\If{$c$ outputs $\mathcal{R}$}
{
      $s \gets \text{Sample from }\textsc{Uniform}[I=I_{\text{right}}]$ 
}
\Return $s$
\caption{$\text{Sample from }\mathcal{D}_{\text{restricted}}$}
\label{alg:RestrictedEstimator}
\end{algorithm}
\end{proof}

We continue by presenting the construction of our sample generator for $\mathcal{D}_{\text{general}}$ ((Case (B) of Algorithm \ref{alg:Estimator}, $X \not \in \mathcal{H}$.)).
Firstly, let's recall the un-normalized term of $\mathcal{D}_{\text{general}}$, using  the definition of $f_{\mathcal{A}\left(X\right)}$ (\Cref{alg:general}):

\begin{equation*}
\textsc{UnNormalized}(X,\omega)=\exp\left(\displaystyle\inf_{X' \in \mathcal{H}} \left[ \left(\frac{\epsilon}{2} d_H(X,X')\right) -\frac{\epsilon}{4} \min\left\{ \mystar  \left|m(X')-\omega\right|,\mysquare \right\} \right]\right)
\end{equation*}
defined in $[-B,B]=\myregion$.

Our main technical contribution for this part is showing that the above  constrained optimization problem can be solved in polynomial-time and then using it to obtain a polynomial-time sample generator for the desired distribution. A key observation is that $\textsc{UnNormalized}(X,\omega)$ depends only on the possible values of the median $m(X')$ and the distance $d_H(X,X'),$ for any data-set $X'\in \mathcal{H}$. 

To motivate our first technical lemma let's re-write the above optimization expression with an equivalent form~ 
\begin{equation*}
\textsc{UnNormalized}(X,\omega)=\exp\left(\inf_{k\in[n]}\displaystyle\inf_{
\begin{matrix}X' \in \mathcal{H}\\m(X')=\xi\\\xi\in[-R-r/2,R+r/2]\\d_H(X,X')=k\end{matrix}}
\left[ \left(\frac{\epsilon}{2} k\right) -\frac{\epsilon}{4} \min\left\{ \mystar  \left|\xi-\omega\right|,\mysquare \right\} \right]\right)
\end{equation*}

In order to discretize the optimization space over $\mathcal{H}$, we use the following observation that we presented in \Cref{definition:TypicalHamming}.

\begin{lemma}[Restated \Cref{definition:TypicalHamming}]
\label{appendix:definition:TypicalHamming}
$\\$Let us define $\textsc{TypicalHamming}(X,\xi)$ as:
\[\textsc{TypicalHamming}(X,\xi)=\displaystyle\min_{\begin{matrix}X'\in\mathcal{H},m(X')=\xi
\\\xi\in[-R-r/2,R+r/2]\end{matrix}
} d_H(X,X').\] Then for any data-set $X$, \textsc{TypicalHamming}$(X,\cdot)$ is piece-wise constant function in $[-R-r/2,R+r/2]$ with at most $\operatorname{poly}(n)$ changes.
Additionally, for any $\xi$ and data-set $X$, $\textsc{TypicalHamming}(X,\xi)$ can be computed exactly using the $\operatorname{poly}(n)$
\footnote{For the interested reader, both the number of changes is at most $3n^2$ and the  time complexity is $O(n^4)$, but we will keep for simplicity the general expression $\operatorname{poly}(n)$ in our formal statement}-time algorithm described in Algorithm \ref{alg:typicalHamming}.
\end{lemma}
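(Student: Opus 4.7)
The plan has two parts: (a) show that the function $\xi \mapsto \textsc{TypicalHamming}(X,\xi)$ is piecewise constant with $O(n^2)$ pieces, and (b) show how to evaluate it in $\operatorname{poly}(n)$ time at any given $\xi$. Once both are in hand, Algorithm~\ref{alg:typicalHamming} becomes: sort $X$, enumerate the breakpoints in increasing order, invoke the evaluator on a representative in each interval, and label the interval by the computed value.

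For (a), I would observe that whether a candidate $X' \in \mathbb{R}^n$ with $m(X')=\xi$ lies in $\mathcal{H}$, together with the optimal $d_H(X,X')$ over such $X'$, depends on the pair $(X,\xi)$ only through a finite list of predicates: for each $i \in [n]$ and each $\kappa \in \{1,\ldots,\lfloor Lnr/(2C) \rfloor\}$, whether $X_i \in [\xi,\xi+\kappa C/(Ln)]$, whether $X_i \in [\xi - \kappa C/(Ln),\xi]$, and the sign of $X_i - \xi$. Since $Lr \le 1/2$ forces $\kappa \le n$, each of these predicates switches value only at $\xi$-values of the form $X_i$ or $X_i \pm \kappa C/(Ln)$, giving at most $3n^2$ breakpoints in $[-R-r/2,R+r/2]$. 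Between any two consecutive breakpoints the optimisation problem is \emph{literally} the same, so its optimum is constant, yielding the claimed piecewise-constant structure.

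For (b), at any fixed $\xi$ I would recast the minimisation as: pick $S \subseteq [n]$ of ``preserved'' coordinates (setting $X'_i = X_i$) and leave $f = n - |S|$ ``free'' coordinates to be placed arbitrarily; minimise $f$ subject to $X' \in \mathcal{H}$ and $m(X') = \xi$. The key structural observation is that placing \emph{every} free value at exactly $\xi$ is without loss of generality: such a value lies in every shell on both sides of $\xi$ and counts as both $\le \xi$ and $\ge \xi$ for the left median, so it dominates any other placement against the purely lower-bound constraints of $\mathcal{H}$. Under this reduction, feasibility becomes a small system of inequalities on $|\{i \in S : X_i < \xi\}|$, $|\{i \in S : X_i \in [\xi,\xi + \kappa C/(Ln)]\}|$, and its left-sided analogue. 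After sorting $X$ once and precomputing prefix sums within each shell, one searches $f = 0,1,\ldots,n$: for each $f$ the largest valid $S$ is constructed greedily (keep all $X_i \ge \xi$, and include the $X_i < \xi$ nearest to $\xi$ up to the $\lceil n/2 \rceil - 1$ bound forced by the median), and the $O(n)$ shell inequalities are checked. This yields an $O(n^2)$ evaluator, and multiplied by the $O(n^2)$ breakpoints of (a) gives the advertised $O(n^4)$ total time.

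The main obstacle I anticipate is the ordinal-median bookkeeping: because the left empirical median is a rank-based quantity, one must verify that the infinitesimal tie-breaking required to realise the ``place free values at $\xi$'' step does not disturb the shell membership of those values. Since the shells are closed intervals containing $\xi$ on both sides, an arbitrarily small perturbation of a free value around $\xi$ leaves every shell predicate unchanged while being able to swing the ordinal median either way as needed, so the reduction is clean. With that caveat handled, (a) and (b) combine to give both the piecewise-constant claim and the $\operatorname{poly}(n)$-time algorithm.
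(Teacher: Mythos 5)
Your proposal is correct and follows essentially the same route as the paper: the same ``set every modified coordinate to $\xi$ and modify farthest-first'' exchange argument for the fixed-$\xi$ evaluator, and the same anchor-point set $\{X_i \pm \kappa C/(Ln) : \kappa \le \lfloor Lnr/(2C)\rfloor\}$ yielding $O(n^2)$ breakpoints between which every shell and median predicate --- and hence the optimum --- is constant. The only nit is that your greedy description (``keep all $X_i \ge \xi$'') covers only the case where $\xi$ lies above the current left median; the symmetric case, where elements above $\xi$ must be pulled down to $\xi$ to make it the left median, is handled explicitly in the paper's Part~1 of Algorithm~\ref{alg:typicalHamming} and should be stated in your write-up as well.
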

\begin{proof}
Our proof \textcolor{black}{follows by combining the following claims:}
\begin{enumerate}
    \item (\Cref{claim:algo:TypicalHamming}) 
We begin the proof by presenting a simple greedy algorithm, Algorithm \ref{alg:typicalHamming}, that outputs a data-set $X'$ such that it belongs to the typical set $\mathcal{H}$, and its median $m(X')$ equals to an input value $\xi$. From all the possible choices of $X'$ the greedy algorithm chooses the one that minimizes the Hamming distance from an input data-set $X=(X_1,X_2,\ldots,X_n)$.
    \item (\Cref{claim:partition:H}) Having established the correctness of the greedy method, we show that there exists a partition of $[-R-r/2,R+r/2]$ to a collection of  $\operatorname{poly}(n)$ disjoint subintervals such that in every subinterval, the output of the greedy algorithm remains constant.
\end{enumerate}

\begin{claim}\label{claim:algo:TypicalHamming}
The procedure Algorithm \ref{alg:typicalHamming} for solving $\textsc{TypicalHamming}(X=(X_1,\ldots,X_n),\xi)$ outputs a data-set $X'$ satisfying the conditions that $m(X')=\xi$ and $X'\in \mathcal{H}$
that minimizes the Hamming distance between $X,X'$ over all $X'\in\mathcal{H}$, i.e $\displaystyle \arg\min_{X'\in\mathcal{H}}\{d_H(X,X')\}$.
Moreover $\textsc{TypicalHamming}$ runs in $O(n^2)$ time.
\end{claim}

\begin{algorithm}
\textbf{If} $\xi\not\in [-R-r/2,R+r/2]$ \textbf{then} \Return \texttt{Impossible}\\
Create a copy $X'$ of data-set $X$ \texttt{i.e $X_i'\gets X_i \forall \ i \in [n]$}\\
Sort $X'$ s.t $X_1'\le\ldots\le X_n'$ \texttt{--Tie breaker: Maintain the order before the sorting.--}  \\
Extract the permutation $\pi: [n] \rightarrow [n]$ with $X'_{\pi(i)}=X_{i}$.\\
Part 1.\texttt{Rebalance the data-set s.t $\xi$ is the statistical median}

\If{$\Big(\textsc{LeftMedian}(X')\neq \xi \Big)$}
{
    Let $k= \arg\min_{\kappa\in[n]}\{\left |\lceil \frac{n}{2} \right \rceil-\kappa|$ s.t $X'_{\kappa}\le \xi \le X'_{\kappa+1}\}$ . \\
        \For{$i\in\left[\left |\lceil \frac{n}{2} \right \rceil-k|\right]$}
        {
        
            \If{$k<\left \lceil \frac{n}{2} \right \rceil$}
            {
                Set $X_{n-i+1}'\gets \xi$
            }
            \Else
            {
                Set $X_{i}'\gets \xi$
            }
        }
}
Part 2.\texttt{Rebalance the data-set to achieve concentration around $\xi$}\\
\For{ $\kappa\in\{\lfloor \frac{Lnr}{2C} \rfloor,\cdots,1\}$ }
{
    $ \textrm{Right}\gets  \kappa +1-\displaystyle\sum_{i\in [n]}\mathbf{1}\{X'_i-\xi\in[0,\frac{\kappa C }{Ln}]\} $\\
    \While{ $\textrm{Right} >0  $}
    {
        $X'_{\arg\max{X'}}\gets \xi$ \\
        $ \textrm{Right}\gets  \kappa +1-\displaystyle\sum_{i\in [n]}\mathbf{1}\{X'_i-\xi\in[0,\frac{\kappa C }{Ln}]\} $
    }
    $ \textrm{Left} \gets  \kappa +1-\displaystyle\sum_{i\in [n]}\mathbf{1}\{\xi-X'_i\in[0,\frac{\kappa C }{Ln}]\} $\\
    \While{ $\textrm{Left} >0  $}
    {   
        $X'_{\arg\min{X'}}   \gets \xi$\\
        $ \textrm{Left}     \gets  \kappa +1-\displaystyle\sum_{i\in [n]}\mathbf{1}\{\xi-X'_i\in[0,\frac{\kappa C }{Ln}]\} $
    }
}
For all $i=1,\ldots,n$ set $X'_i \leftarrow X'_{\pi(i)}$\\ 
\Return $d_H(X,X')$ 
\caption{\color{black}$\textsc{TypicalHamming}(X=(X_1,\ldots,X_n),\xi)$. In words, our greedy algorithm has three parts.
As preliminary steps, we firstly copy the data-set to a table $X'$, we sort it and compute its left-median. In the case that $\xi$ is not equal with its left median, we compute between which elements $\xi$ lies inside the ordered table $X'$. In the case of ties, we ``position'' $\xi$ to the closest possible to the median.
In Part 1. if $\xi$ is less than the median, then we change the highest possible values of the table to $\xi$ until $\xi$ becomes the median, symmetrically for the other case.
In Part 2. we follow the same strategy like Part 1 but our goal now to maintain the concentration requirements of $\sum_{i\in [n]}\mathbf{1}\{X_i-m(X)\in[0,\frac{\kappa C }{Ln}]\}\ge \kappa +1$ (Left-buckets)
$\sum_{i\in [n]}\mathbf{1}\{m(X)-X_i\in[0,\frac{\kappa C }{Ln}]\}\ge \kappa +1$ (Right-buckets)
for $\kappa\in\{\lfloor \frac{Lnr}{2C} \rfloor,\cdots,1\}$.
}
\label{alg:typicalHamming}
\end{algorithm}
\begin{proof}
Firstly, we will write down the list of the constraints that the above optimization algorithm needs to satisfy:
\[\footnotesize \begin{cases}
\displaystyle\sum_{i\in [n]}\mathbf{1}\{\xi-X_i\in[0,+\infty]\}\ge\frac{n}{2}            &\text{Median-Left Side (m-LS)}\\
\displaystyle\sum_{i\in [n]}\mathbf{1}\{X_i-\xi\in[0,+\infty]\}\ge\frac{n}{2}            &\text{Median-Right Side (m-RS)}\\
\displaystyle\sum_{i\in [n]}\mathbf{1}\{\xi-X_i\in[0,\frac{C }{Ln}]\}\ge2       &\text{$\kappa=1$-Left Side (1-LS)}\\
\displaystyle\sum_{i\in [n]}\mathbf{1}\{X_i-\xi\in[0,\frac{C }{Ln}]\}\ge2        &\text{$\kappa=1$-Right Side (1-RS)}\\
\vdots\\
\displaystyle\sum_{i\in [n]}\mathbf{1}\{\xi-X_i\in[0,\frac{\lfloor \frac{Lnr}{2C} \rfloor C }{Ln}]\}\ge \lfloor \frac{Lnr}{2C} \rfloor+1        &\text{$\kappa=\lfloor \frac{Lnr}{2C} \rfloor$-Left Side ($\lfloor \frac{Lnr}{2C} \rfloor$-LS)}\\
\displaystyle\sum_{i\in [n]}\mathbf{1}\{X_i-\xi\in[0,\frac{\lfloor \frac{Lnr}{2C} \rfloor C }{Ln}]\}\ge \lfloor \frac{Lnr}{2C} \rfloor+1        &\text{$\kappa=\lfloor \frac{Lnr}{2C} \rfloor$-Right Side ($\lfloor \frac{Lnr}{2C} \rfloor$-RS)}
\end{cases}\]  
Our algorithm, as described in Algorithm \ref{alg:typicalHamming}, runs based on the following greedy choice:
\begin{center}\textcolor{black}{\emph{In order to transform $X$ to $X'$ with the least number of changes $\min d_H(X,X')$\\
s.t $\xi$ is a well-concentrated left-median of $X'$,  }}
\[\begin{cases}
\textcolor{black}{\emph{If an LS/RS constraint is violated, we change firstly the further element of $\xi$ from the right/left side}. }\\
\textcolor{black}{\emph{If we change an element of $X$, we set it always $\xi$}. }\\
\end{cases}
\]\end{center}
It is easy to check using classical exchange arguments for both part (A) and part (B) that the algorithm outputs an optimal solution.
\textcolor{black}{Indeed, let $X_{\mathcal{O}}$ be an optimal data-set for the above combinatorial problem, in other words:
\[X_{\mathcal{O}} \in \displaystyle\arg\min_{\begin{matrix}X'\in\mathcal{H},m(X')=\xi\\\xi\in[-R-r/2,R+r/2]\end{matrix}} d_H(X,X')\]
We show that for any optimal data-set $X_{\mathcal{O}}$, different from the the output $X'$ of the Algorithm \ref{alg:typicalHamming},
we can transform $X_{\mathcal{O}}$ to the output $X'$ of the Algorithm \ref{alg:typicalHamming} without increasing the Hamming distance from the input data-set $X$. \textcolor{black}{To describe uniquely $X_{\mathcal{O}}$, it suffices to list its modifications from the set $X$. Let's denote this list }
$\mathcal{M}_{\mathcal{O}}=\{X_{o_1}\to  v_{o_1}',\cdots,X_{o_k}\to  v_{o_k}'\}$.}

Firstly we  mention that the greedy choice of \textcolor{black}{\emph{setting any changed element of $X$ to $\xi$}}
is always as good as any other optimal choice, since $\xi$ is by definition the median of $X_{\mathcal{O}}$.
This holds because $\xi$ is the central value for any of the interval constraints.
Thus we can transform $\mathcal{M}_{\mathcal{O}}$ to $\mathcal{M}_{\mathcal{O}}'=\{X_{o_1}\to  \xi,\cdots,X_{o_k}\to  \xi\}$.

Secondly, it is easy to check because of the nesting nature of the constraints around the median that the greedy choice of  \textcolor{black}{\emph{ changing firstly the further element of $\xi$}} is again always as good as any other optimal choice. Indeed, 
let $
\mathcal{T}
$ be the list of modifications provided by our method, $\mathcal{M}_{\mathcal{T}}=\{X_{T_1}\to  \xi,\cdots,X_{T_\ell}\to \xi\}$. Indeed, if $\mathcal{M}_{\mathcal{T}}$ and $\mathcal{M}_{\mathcal{O}}$ differ in one change we can always follow the greedy's choice since it will satisfy \textbf{at least} the same number of constraints than
any other solution.
\begin{figure}[H]
\centering
    \begin{tikzpicture}[scale=0.9]
    \shade[ball color = red!40, opacity = 0.1] (1,0) ellipse (40pt and 20pt);
    \shade[ball color = red!40, opacity = 0.2] (1,0) ellipse (80pt and 20pt);
    \shade[ball color = red!40, opacity = 0.3] (1,0) ellipse (120pt and 20pt);
  
    \draw[thick] (1,0) ellipse (40pt and 20pt);
    \draw[thick] (1,0) ellipse (80pt and 20pt);
    \draw[thick] (1,0) ellipse (120pt and 20pt);
    \draw[thick] (-4,0) -- (6,0);
    \foreach \k in {-2, ...,2,3}
    {
        \filldraw[blue] (\k*2,0) circle (2.5pt);
    }
    \node at (0*2,-1) {$P_{i}$};
    \node at (1*2,-1) {$P_{i+1}$};
    \filldraw[red] (0.4*2,0) circle (2.5pt);
    \node at (0.4*2,-1) {$\overset{\uparrow}{\xi}$};
    \draw [->,thick]  (6,0) to [in=30,out=150]  (0.4*2,0)  ;
    \draw [->,thick]  (-4,0) to [in=150,out=30]  (0.4*2,0)  ;
    \end{tikzpicture}
\end{figure}

Thus without never worsening the optimal solution, we can eliminate any differences by changing inductively $X_{\mathcal{O}}$ to the solution of $\textsc{TypicalHamming}(X=(X_1,\ldots,X_n),\xi) $.  Finally the time complexity of the algorithm is $O(n\log n) + O(n)$ for part (A) and $O(Lnr) \times O(n)=O(n^2)$ for part (B). 
\end{proof}


\begin{claim}\label{claim:partition:H}
For a given data-set $X=(X_1,\ldots,X_n)$, there is a $\mathrm{poly}(n)$-time computable ordered collection of $O(n^2)$ points $\mathbb{P}=\{-R-r/2=P_0,P_1,\cdots,P_k,P_{k+1}=R+r/2]\}$ such that for every open interval $I=(P_{i-1},P_{i})$ , it holds that :
\begin{equation*} 
 \text{For any $\xi_1,\xi_2 \in I$ : $\textsc{TypicalHamming}(X,\xi_1)=\textsc{TypicalHamming}(X,\xi_2)$.}
 \end{equation*}
\end{claim}
\begin{proof}
We will start by describing the set $\mathbb{P}$. For each point of our data-set we create
$2 \lfloor \frac{Lnr}{2C} \rfloor+1$ \textbf{anchor}-points. More precisely for the point $X_i$, we set as anchor points:
\[\operatorname{anchors}(X_i)=\left\{X_i\pm\kappa \times \frac{C }{Ln}\Big|\forall \kappa \in \{0,1,\cdots,  \lfloor \frac{Lnr}{2C} \rfloor \} \right\}\] 
Let $\mathbb{P}$ be the union of all the anchor points plus the limit points of the interval $[-R-r/2,R+r/2]$: 
\[\mathbb{P}=\bigcup_{i\in[n]}\operatorname{anchors}(X_i)\cup\{-R-r/2,R+r/2\}\]
It is easy to see, since $Lr\leq 1/2, C>1$ that we need $O(n^2)$ time for its construction:
\begin{algorithm}
$\mathbb{P}\gets\{-R-r/2,R+r/2\}$\\
\For{$i\in\left[n\right]$}
{
    $\mathbb{P}\gets X_i\cup \mathbb{P}$\\
    \For{$\kappa\in\left\{1\cdots \lfloor \frac{Lnr}{2C} \rfloor\right\}$}{
        $\mathbb{P}\gets (X_i+\kappa \times \frac{C }{Ln}) \cup \mathbb{P}$\\
        $\mathbb{P}\gets (X_i-\kappa \times \frac{C }{Ln}) \cup \mathbb{P}$
    }
}
\Return $\mathbb{P}$
\caption{$\textsc{Construction}-\mathbb{P}(X=(X_1,\ldots,X_n)) $}
\label{alg:anchorpoints}
\end{algorithm}

Finally, we show that for any consecutive points of $\mathbb{P}$, let $p_i,p_{i+1}$, the algorithm $\textsc{TypicalHamming}(X,\xi)$ outputs the
same value for any $\xi \in (p_i,p_{i+1})$.
\begin{figure}[H]
\centering
    \begin{tikzpicture}
    \draw[thick] (-4,0) -- (5,0);
    \foreach \k in {-2, ...,2}
    {
        \filldraw (\k*2,0) circle (2.5pt);
    }
    \node at (0*2,-1) {$P_{i}$};
    \node at (0.2*2,0) {$[$};
    \node at (1*2,-1) {$P_{i+1}$};
    \node at (0.8*2,0) {$]$};
    \filldraw[red] (0.4*2,0) circle (2.5pt);
    \filldraw[red] (0.6*2,0) circle (2.5pt);
    \node at (0.4*2,-1) {$\overset{\uparrow}{\xi}$};
    \node at (0.6*2,-1) {$\overset{\uparrow}{\xi'}$};
    \end{tikzpicture}
\end{figure}

Indeed, let's assume any $\xi\neq\xi'$ such that $\xi,\xi'\in(p_i,p_{i+1})$, where $p_i,p_{i+1}\in\mathbb{P}$.
Since between $\xi,\xi'$ we assumed that there is no point of $\mathbb{P}$, \textcolor{black}{ by definition we have that}
\begin{equation}\label{eq:oblivious}
    \mathbf{1}\{p_a\le \xi\le p_b\}=\mathbf{1}\{p_a\le \xi'\le p_b\}\ \ \forall p_a,p_b\in \mathbb{P} \tag{Oblivious Property}
\end{equation}

By inspection of the algorithm, it follows that all the decision of the algorithm have been taken based on the queries of the form:
\[
\begin{Bmatrix}
Q_1(\xi)&:\displaystyle\sum_{i\in [n]}\mathbf{1}\{\xi\ge X_i\}\ge\frac{n}{2} &            
Q_2(\xi)&:\displaystyle\sum_{i\in [n]}\mathbf{1}\{X_i\ge \xi\}\ge\frac{n}{2} \\         
Q_3(\kappa,\xi)&:\displaystyle\sum_{i\in [n]}\mathbf{1}\{X_i \le \xi \le X_i-\frac{\kappa C }{Ln}]\}>\kappa+1 &
Q_4(\kappa,\xi)&:\displaystyle\sum_{i\in [n]}\mathbf{1}\{\frac{\kappa C }{Ln} \le X_i \le \xi \}>\kappa+1
\end{Bmatrix}
\]
More precisely the queries $Q_1(\xi),Q_2(\xi)$ are used in Part(A) and $Q_3(\xi),Q_4(\xi)$ in the Part (B).
Additonally, it is easy to check that any query calculates sums of indicators of the form of \cref{eq:oblivious}.
Therefore for any $\xi,\xi'$ that belong to the open interval $(p_i,p_{i+1})$,
it holds that:
\[Q_i(\xi)=\textsc{True}\Leftrightarrow Q_i(\xi')=\textsc{True} \ \ \forall i\in\{1,2,3,4\}\]
which implies that the output of the optimal algorithm is the same and therefore $\textsc{TypicalHamming}(X,\xi)=\textsc{TypicalHamming}(X,\xi')$
\end{proof}

From the above claim, we showed that $\textsc{TypicalHamming}(X,\xi)$ stays constant inside an open subcover of $[-R-r/2,R+r/2]$.
To complete our proof for \Cref{appendix:definition:TypicalHamming}, we just extend the definition also in the breakpoints $\mathbb{P}$, i.e

\[
\textsc{TypicalHamming}(X,\xi)=
\begin{cases}
\textsc{TypicalHamming}(X,P_0)&\xi\in I_1:=[P_0,P_0]\equiv -R-r/2\\
\textsc{TypicalHamming}(X,\frac{P_0+P_1}{2})&\xi\in I_2:=(P_0,P_1)\\
\textsc{TypicalHamming}(X,P_1)&\xi\in I_3:=[P_1,P_1]\equiv P_1\\
\textsc{TypicalHamming}(X,\frac{P_1+P_2}{2})&\xi\in I_4:=(P_1,P_2)\\
\textsc{TypicalHamming}(X,P_2)&\xi\in I_5:=[P_2,P_2]\equiv P_2\\
\quad\quad\vdots\\
\textsc{TypicalHamming}(X,P_{O(n^2)})&\xi\in  I_{O(n^2)}:=[P_{O(n^2)},P_{O(n^2)}]\equiv R+r/2
\end{cases}
\]

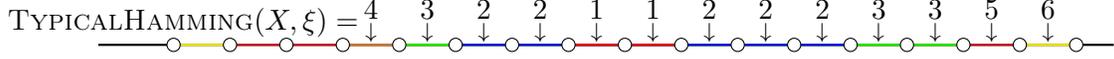
\begin{figure}[!h]
\centering
    \begin{tikzpicture}
    \draw[thick] (-7,0) -- (6.5,0);
    \draw[thick,yellow] (4*1.5,0) -- (-4*1.5,0);
    \draw[thick,purple] (3.5*1.5,0) -- (-3.5*1.5,0);
    \draw[thick,green] (3*1.5,0) -- (-2*1.5,0);
    \draw[thick,blue] (2*1.5,0) -- (-1.5*1.5,0);
    \draw[thick,red] (0.5*1.5,0) -- (-0.5*1.5,0);
    \draw[thick,brown] (-2.5*1.5,0) -- (-2*1.5,0);
    \foreach \k in {-4,-3.5,...,4}
    {
    \draw[fill=white] ({\k*1.5},0) circle (2.5pt);
    }
    \node at (-3.75*1.5-0.25,0.3) {$\textsc{TypicalHamming}(X,\xi)=$};
    \node at (3.75*1.5,0.3) {$\underset{\downarrow}{6}$};
    \node at (3.25*1.5,0.3) {$\underset{\downarrow}{5}$};
    
    \node at (2.75*1.5,0.3) {$\underset{\downarrow}{3}$};
    \node at (2.25*1.5,0.3) {$\underset{\downarrow}{3}$};
    \node at (1.75*1.5,0.3) {$\underset{\downarrow}{2}$};
    \node at (1.25*1.5,0.3) {$\underset{\downarrow}{2}$};
    \node at (.75*1.5,0.3) {$\underset{\downarrow}{2}$};
    \node at (0.25*1.5,0.3) {$\underset{\downarrow}{1}$};

    \node at (-2.25*1.5,0.3) {$\underset{\downarrow}{4}$};
    \node at (-1.75*1.5,0.3) {$\underset{\downarrow}{3}$};
    \node at (-1.25*1.5,0.3) {$\underset{\downarrow}{2}$};
    \node at (-.75*1.5,0.3) {$\underset{\downarrow}{2}$};
    \node at (-0.25*1.5,0.3) {$\underset{\downarrow}{1}$};
    \end{tikzpicture}
    \caption{\emph{\textcolor{black}{Solely} for illustrative purpose, we present a ``feasibly'' representative sketch of the decomposition of our interval $\mathcal{I}=[-B,B]$ based on  $\textsc{TypicalHamming}(X,\xi)$}}
\end{figure}

\end{proof}

\begin{remark}
One can establish more results on the structure of \emph{\textsc{TypicalHamming}} than the one presented in Lemma \ref{appendix:definition:TypicalHamming}; for example \emph{\textsc{TypicalHamming}} can be proven to be left or right semi-continuous depending on the position of the left empirical median $m(X)$, implying an additional structure on the intervals where it is of constant value.  For example, for the simple case of $\mathcal{H}=\mathbb{Z}^n$
and $X=\{1,2,3,4,5\}$ \textsc{TypicalHamming} is constant in $[1,2)[2,3),[3,3],(3,4],(4,5]$.
Nevertheless, in order to avoid the complexity of the description of the rules of the continuity in the potentially more complex typical set $\mathcal{H}$ that we examine in our work,
we just included in the proof of Lemma \ref{appendix:definition:TypicalHamming} separately all the breakpoints as separated intervals where the \emph{\textsc{TypicalHamming}} is trivially of constant value.
\end{remark}
An immediate consequence of \Cref{appendix:definition:TypicalHamming} is that the following definition is well-defined.
\begin{definition}\label{appendix:corollary:TypicalHamming}
For any data-set $X$,  there is a partition of $[-R-r/2,R+r/2]$ to a collection $\mathbb{I}$ of $\operatorname{poly}(n)$ consecutive intervals, $\mathbb{I}=\{ I_1,\cdots,I_{\operatorname{poly}(n)} \}$ for which we denote
\begin{equation*} 
 \text{ $\textsc{TypicalHamming}(X,I):=\textsc{TypicalHamming}(X,\xi)$,}
 \end{equation*}for arbitrary $\xi \in I$.
\end{definition}

We also define the following collection of intervals and points:~
\begin{definition}\label{definition:end-points}
For each $k\in[n]$, let us define $\mathbb{I}_k$ the union of the intervals $I$ where $\textsc{TypicalHamming}(X,I)=k$:  \[\mathbb{I}_k=\displaystyle\bigcup_{I\in\mathbb{I},\textsc{TypicalHamming}(X,I)=k} I\]
Additionally, let us define the limit points $\xi_{k,\inf},\xi_{k,\sup}$ such that $\textsc{TypicalHamming}(X,I)=k$:
\[
\xi_{k,\inf}=\texttt{left-endpoint}(\mathbb{I}_k)=\inf_{\xi\in \mathbb{I}_k} \xi ,\quad \xi_{k,\sup}=\texttt{right-endpoint}(\mathbb{I}_k)=\sup_{\xi\in \mathbb{I}_k} \xi
\]
\end{definition}
\emph{Notice that for every $k\in[n]$ we can compute in $O(n^4)$ time the values of  $\xi_{k,\inf},\xi_{k,\sup}$}.
We are ready now to prove our main technical lemma:
\begin{lemma}[Restated \Cref{lemma:algo:concatenation-laplacian}]\label{appendix:lemma:algo:concatenation-laplacian}
For any given data-set $X$, there is a partition of $[-B,B]$ to a collection $\mathbb{J}$ of $\operatorname{poly}(n)$ consecutive intervals, $\mathbb{J}=\{ J_1,\cdots,J_{\operatorname{poly}(n)} \}$ such that :
\begin{equation*} 
 \text{For any $J_i \in \mathbb{J}\quad :\textsc{UnNormalized}(X,\omega)=
 \exp\left(\alpha_i \omega+\beta_i\right) \quad \forall \omega\in J_i$}
 \end{equation*}
 Moreover, we can compute exactly $\mathbb{J}$ and the constants $(\alpha_i,\beta_i)$ for every $J_i\in\mathbb{J}$ in $O(\operatorname{poly}(n))$ time.\footnote{ Again, for the interested reader, both the number of intervals is at most $7n^2$ and the exact time complexity is $O(n^4)$, but we will keep for simplicity the general expression $\operatorname{poly}(n)$ in our formal statement}
\end{lemma}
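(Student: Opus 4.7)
The plan is to first collapse the infimum over $X'\in\mathcal{H}$ into a one-dimensional optimization over the candidate median $\xi = m(X')$, and then to exploit the piecewise constant structure of $\textsc{TypicalHamming}$ established in Lemma~\ref{appendix:definition:TypicalHamming}. Observe that the objective inside $\textsc{UnNormalized}(X,\omega)$ depends on $X'$ only through $d_H(X,X')$ and $m(X')$, and that a smaller value of $d_H(X,X')$ strictly decreases the objective. Hence, for each fixed $\xi\in[-R-r/2,R+r/2]$, the optimal $X'\in\mathcal{H}$ with $m(X')=\xi$ is one that minimizes $d_H(X,X')$, whose value is by definition $\textsc{TypicalHamming}(X,\xi)$. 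This yields the reformulation
\begin{equation*}
\textsc{UnNormalized}(X,\omega)=\exp\!\left(\inf_{\xi\in[-R-r/2,R+r/2]}\!\left[\frac{\epsilon}{2}\textsc{TypicalHamming}(X,\xi) -\frac{\epsilon}{4}\min\!\left\{\mystar|\xi-\omega|,\mysquare\right\}\right]\right).
\end{equation*}

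Next, since $\textsc{TypicalHamming}(X,\cdot)$ takes at most $n+1$ integer values, using Definition~\ref{definition:end-points} the infimum rewrites as $\min_{k:\mathbb{I}_k\neq\emptyset} F_k(\omega)$, where
\begin{equation*}
F_k(\omega) := \frac{\epsilon}{2}k - \sup_{\xi\in\mathbb{I}_k}\frac{\epsilon}{4}\min\!\left\{\mystar|\xi-\omega|,\mysquare\right\}.
\end{equation*}
The key structural observation is that $\xi\mapsto \min\{\mystar|\xi-\omega|,\mysquare\}$ is non-decreasing in $|\xi-\omega|$, so the sup over the union of intervals $\mathbb{I}_k$ is attained at one of the two extreme endpoints $\xi_{k,\inf}$ or $\xi_{k,\sup}$. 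Thus $\sup_{\xi\in\mathbb{I}_k}|\xi-\omega|=\max(\omega-\xi_{k,\inf},\,\xi_{k,\sup}-\omega)$, a piecewise linear function of $\omega$ with a single break at $(\xi_{k,\inf}+\xi_{k,\sup})/2$; after composing with the cap $\mysquare$, each $F_k$ is piecewise linear in $\omega$ with $O(1)$ pieces whose breakpoints depend only on $\xi_{k,\inf}$, $\xi_{k,\sup}$ and the threshold $3Cr$.

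Consequently, $\min_k F_k$ is the lower envelope of at most $n$ piecewise linear functions of $O(1)$ complexity each, and is therefore itself piecewise linear with $\operatorname{poly}(n)$ pieces. Partitioning $[-B,B]$ along the breakpoints of this envelope yields the desired collection $\mathbb J$, and on each $J_i$ the exponent is a linear form $\alpha_i\omega+\beta_i$, so $\textsc{UnNormalized}(X,\omega)=\exp(\alpha_i\omega+\beta_i)$ on $J_i$. Algorithmically, the recipe is: (i) compute $\xi_{k,\inf},\xi_{k,\sup}$ for every $k\in[n]$ in $\operatorname{poly}(n)$ time via the procedure of Lemma~\ref{appendix:definition:TypicalHamming} together with Definition~\ref{definition:end-points}; (ii) assemble the $O(1)$ breakpoints of each $F_k$ explicitly from $\xi_{k,\inf},\xi_{k,\sup}$ and the cap threshold $3Cr$; (iii) compute the lower envelope by a standard sweep/merge in $\operatorname{poly}(n)$ time, reading off each $(\alpha_i,\beta_i)$ directly from the active $F_k$ on each output piece. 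The main point requiring care is the endpoint argument that collapses the sup over the union of intervals $\mathbb{I}_k$ to its two extreme endpoints; once this is in place, the remainder is routine lower-envelope bookkeeping.
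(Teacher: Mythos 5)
Your proposal follows the same route as the paper's proof: it collapses the infimum over $X'\in\mathcal{H}$ to a one-dimensional optimization over $\xi$ via $\textsc{TypicalHamming}$, pushes the supremum over each $\mathbb{I}_k$ to the extreme endpoints $\xi_{k,\inf},\xi_{k,\sup}$, notes that each resulting $F_k$ is piecewise linear with $O(1)$ pieces, and computes the lower envelope of these $n$ functions by a sweep. This matches the paper's argument with $h_k$ in both the decomposition and the algorithmic strategy; the endpoint argument you highlight as the crux is indeed the same identity the paper encodes as $\max_{p\in\{\xi_{k,\inf},\xi_{k,\sup}\}}|p-\omega|$.
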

\begin{proof}
Using $\textsc{TypicalHamming}$ we can reduce the optimization problem for a given $\omega$ as:
\begin{align*}
\textsc{UnNormalized}(X,\omega)&=\exp\left(\min_{k\in[n]}\displaystyle\inf_{
\begin{matrix}X' \in \mathcal{H}\\m(X')=\xi\\\xi\in [-R-r/2,R+r/2]\\d_H(X,X')=k\end{matrix}}
\left[ \left(\frac{\epsilon}{2} k\right) -\frac{\epsilon}{4} \min\left\{ \mystar  \left|\xi-\omega\right|,\mysquare \right\} \right]\right)\\
&=\exp\left(\min_{k\in [n]} \inf_{\underset{\xi\in[-R-r/2,R+r/2]}{\textsc{TypicalHamming}(X,\xi)=k}}
\left[ \left(\frac{\epsilon}{2} k\right) -\frac{\epsilon}{4} \min\left\{ \mystar  \left|\xi-\omega\right|,\mysquare \right\} \right]\right)\\
(\text{By \Cref{appendix:corollary:TypicalHamming}})
&=\exp\left( \displaystyle\min_{k\in [n]} \displaystyle \inf_{\xi\in \mathbb{I}_k}\left[  \frac{\epsilon}{2}k -\frac{ \epsilon}{4} \min\left\{ \mystar  \left|\xi-\omega\right|,\mysquare \right\} \right]
\right)\\
(\text{By \Cref{definition:end-points}})
&=\exp\left(\frac{\epsilon}{2}\min_{k\in [n]}\underbrace{\left\{ k - 
   \frac{1}{2}\cdot \mystar \displaystyle   \min \left\{  \max_{p\in\{\xi_{k,\inf},\xi_{k,\sup}\}} \left|p-\omega\right|,3Cr \right\} \right\}}_{h_k(\omega)}\right)\Leftrightarrow\\
\end{align*}
\begin{equation}
\textsc{UnNormalized}(X,\omega)=\exp\left(\frac{\epsilon}{2}\min_{k\in [n]}h_k(\omega)\right) \label{eq:un-normalized-equals-min-of-hs}
\end{equation}
We now focus on understanding the structure of the functions $h_k(\omega)$ 
\begin{figure}[h!]
    \centering
\begin{tikzpicture}[scale=0.9]
\def\normaltwo{\x,{5/exp(min(abs(\x-3),2.5)/2)}}
\def\uniform{\x,{5/exp(2.5/2)}}
\def\pulseA{\x,{7-min( max(abs(\x-3.5),abs(\x+4)), 5.5)}}
\def\pulseB{\x,{7-min( max(abs(\x-3.5),0), 5.5)}}
\def\pulseC{\x,{7-min( max(0,abs(\x+4)), 5.5)}}
\def\xmax{8}
\def\xcen{-0.25}
\def\xmin{-8}
\def\xlapmax{1.5}
\def\xlapmin{-2}
\def\pulseleft{{7-min( max(abs(\xlapmin-3.5),abs(\xlapmin+4)), 5.5)}}
\def\pulseright{{7-min( max(abs(\xlapmax-3.5),abs(\xlapmax+4)), 5.5)}}
\def\pulsecen{{7-min( max(abs(\xcen-3.5),abs(\xcen+4)), 5.5)}}

\draw[thick,color=blue,domain=\xmin:\xmax,samples=100] plot (\pulseA) node[right] {};
\draw[dashed,color=blue,domain=\xmin:\xmax,samples=100] plot (\pulseB) node[right] {};
\draw[dashed,color=blue,domain=\xmin:\xmax,samples=100] plot (\pulseC) node[right] {};

\draw[-,dashed] (\xmin,0) -- (\xmin,\pulseleft) node[left] {$-B$};
\draw[-,dashed] (\xmax,0) -- (\xmax,\pulseright) node[right] {$B$};
\draw[-,dashed] (3.5,0) -- (3.5,6) node[above] {$\xi_{\sup,k}$};
\draw[-,dashed] (-4,0) -- (-4,6) node[above] {$\xi_{\inf,k}$};
\draw[-,dashed] (\xcen,0) -- (\xcen,\pulsecen) node[above] {$\frac{\xi_{\inf,k}+\xi_{\sup,k}}{2}$};
\draw[-,dashed] (\xlapmax,0) -- (\xlapmax,\pulseright) node[above] {};
\draw[-,dashed] (\xlapmin,0) -- (\xlapmin,\pulseleft) node[above] {};
\node at (\xlapmax,-1) [above] {$\xi_{\inf,k}+3Cr$};
\node at  (\xlapmin,-1) [above] {$\xi_{\sup,k}-3Cr$};

\draw[->,thick] (0,0) -- (\xmin-1,0) node[right] {};
\draw[->,thick] (0,0) -- (\xmax+1,0) node[right] {};
\end{tikzpicture}
\caption{An example of $h_{i}$}
\label{fig:H_is}
\end{figure}
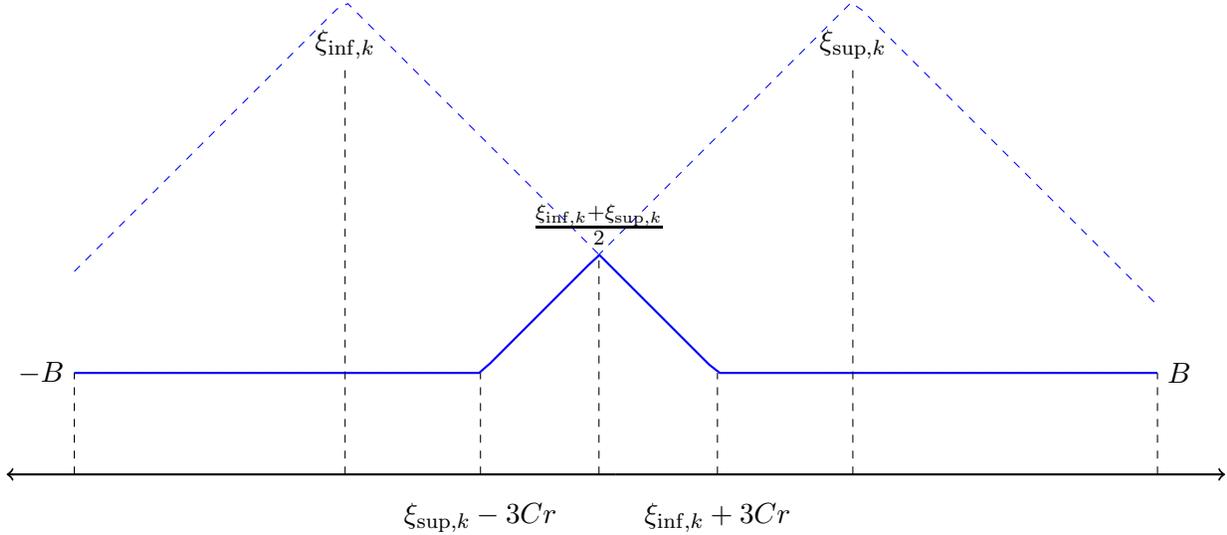

By inspection (See \Cref{fig:H_is}),
it is not difficult to check that every $h_{k}(\omega)$ 
is a piecewise linear function with at most four 
\footnote{The existence/size of the triangular pulse depends actually
by the distance of $\xi_{\inf,k},\xi_{\sup,k}$. Similarly, the existence/duration of the constant pulse depends on the relation of $\{\xi_{\inf,k}+3Cr,\xi_{\sup,k}-3Cr\}$ with $\{-B,B\}$.}
linear pieces
\footnote{The constant pulse can be achieved by setting the corresponding $\alpha_i^{[k]}$ zero.}:
\[\forall k\in[n]:
h_k(\omega)=\begin{cases}
\alpha_{1}^{[k]}\omega+\beta_{1}^{[k]} & \omega\in[\rho_1^{[k]},\rho_2^{[k]}]\equiv[-B,\xi_{\sup,k}-3Cr]\\\\
\alpha_{2}^{[k]}\omega+\beta_{2}^{[k]} & \omega\in[\rho_2^{[k]},\rho_3^{[k]}]\equiv[\xi_{\sup,k}-3Cr,\frac{\xi_{\inf,k}+\xi_{\sup,k}}{2}]\\\\
\alpha_{3}^{[k]}\omega+\beta_{3}^{[k]} & \omega\in[\rho_3^{[k]},\rho_4^{[k]}]\equiv[\frac{\xi_{\inf,k}+\xi_{\sup,k}}{2},\xi_{\inf,k}+3Cr]\\\\
\alpha_{4}^{[k]}\omega+\beta_{4}^{[k]} & \omega\in[\rho_4^{[k]},\rho_5^{[k]}]\equiv[\xi_{\inf,k}+3Cr,B]
\end{cases}
\]
where $BP^{[k]}=\{\rho_1^{[k]},\cdots,\rho_5^{[k]}\}$ are the breakpoints of $h_k$ in which the function changes its expression. 
Since each $h_k(\omega)$ is a piece-wise function with at most 4 pieces, its minimum is also a piece-wise linear function.
Consequently, for any given data-set $X$ there is a partition of $[-B,B]$ to a collection $\mathbb{J}$ consecutive intervals, $\mathbb{J}=\{ J_1,\cdots,J_{|\mathbb{J}|} \}$ such that :
\begin{equation*} 
 \text{For any $J_i \in \mathbb{J}\quad :\min_{k\in [n]}h_k(\omega)=
 \alpha_i \omega+\beta_i \quad \forall \omega\in J_i$}
 \end{equation*}
or equivalently using \cref{eq:un-normalized-equals-min-of-hs}:
\begin{equation*} 
 \text{For any $J_i \in \mathbb{J}\quad :\textsc{UnNormalized}(X,\omega)=
 \exp\left(\alpha_i \omega+\beta_i\right) \quad \forall \omega\in J_i$}
 \end{equation*}
To conclude the proof of our lemma it suffices to show that the size of the collection $\mathbb{J}$ is polynomial and to describe 
a $\operatorname{poly}(n)$-time procedure that computes the  piece-wise linear function $\min_{k\in [n]}h_k$.

Observe that to calculate $\min_{k\in [n]}h_k(\omega)$, it suffices to know the relative ordering of $h_1(\omega),\cdots,h_{n}(\omega)$ for each $\omega\in[-B,B]$.
By continuity of $h_{k}$, the relative order of a pair $h_k,h_\ell$ can not change inside an interval $J$ where there is no solution of the equation $h_k(\omega)=h_\ell(\omega)$.
Let us have $\Pi^{[k,\ell]}=\pi_{1}^{[k,\ell]},\cdots,\pi_{c_{k,\ell}}^{[k,\ell]}$ be the ordered set of solutions of $h_k(\omega)=h_\ell(\omega)$ for every $k\neq\ell$.
Let us denote denote also as $\Pi=\bigcup_{k\neq \ell} \Pi^{[k,\ell]}\cup\{-B,B\}$. Observe that in any interval $J$ of consecutive points of $\Pi$,
the relative ordering of $h_k(\omega),h_{\ell}(\omega)$ for any  $k,\ell\in[n]$ does not change inside $J$.
Thus the minimizing function $h_\star$ does not change inside the interval $J$. We can further subdivide the interval $J$, if it is needed into sub-intervals based on its breakpoints
where $h_{\star}$ is linear.

We present the algorithm to construct the set $\mathbb{J}$:
\begin{algorithm}
Sort $\Phi=\displaystyle\bigcup_{k\neq \ell} \Pi^{[k,\ell]}\cup\{-B,B\}\cup \bigcup_{k} BP^{[k]}$\\
\texttt{//}$\Phi=\{\phi_0=-B\le\pi_1\le\cdots\le\phi_m\le\phi_{m+1}=B\}$\\
$\mathbb{J}\gets\emptyset$\\
    \For{$i\in \left\{1,\cdots, m+1\right\}$}{
    Set $J_{i}=[\phi_{i-1},\phi_i]$\\
    $\mathbb{J}\gets\mathbb{J}\cup J_i$
    }
\Return $\mathbb{J}$
\caption{$\textsc{Construction}-\mathbb{J}(X=(X_1,\ldots,X_n)) $}
\label{alg:J-construction}
\end{algorithm}
    \begin{algorithm}[h!]
    Sort $\Phi=\displaystyle\bigcup_{k\neq \ell} \Pi^{[k,\ell]}\cup\{-B,B\}\cup \bigcup_{k} BP^{[k]}$\\
    \texttt{//}$\Phi=\{\phi_0=-B\le\pi_1\le\cdots\le\phi_m\le\phi_{m+1}=B\}$\\
    Compute $h_1(-B),\cdots,h_n(-B)$\\
    Keep a list of the current order $h_{k}$: $\Sigma=\{h_{i_1}\le\cdots\le h_{i_n}\}$\\
    Keep a list of the current liner form of $h_{k}$: $T=\{h_{1}:(\alpha_1^{[1]},\beta_1^{[1]}) \cdots h_{n}:(\alpha_1^{[n]},\beta_1^{[n]}) \}$\\
    \For{$i\in \left\{1,\cdots, m+1\right\}$}{
        Set $J_{i}=[\phi_{i-1},\phi_i]$\\
        Set $(\alpha_i,\beta_i)\gets T[\min h_{k}]$\\
        \If{$\phi_i\in \Pi$ \texttt{// The next point is crossing point of two functions}}
        {
            Update the relative order $\Sigma$\\
        }
        \If{$\phi_i=\rho_{j}^{[k]}$ \texttt{// The next point is some of the 4 breakpoints of $h_k$}}
        {
            Update $T[h_k]=(\alpha_j^{[k]},\beta_j^{[k]})$\\
        }

    }
    \Return $\{(J_i,\alpha_i,\beta_i)\}_m$
    \caption{$\textsc{Construction}-(\mathbb{J},\pmb{\alpha},\pmb{\beta})(X=(X_1,\ldots,X_n)) $}
\label{alg:J-a-b-construction}
\end{algorithm}
\begin{itemize}
    \item To evaluate the size of $\mathbb{J}$, it suffices to understand the size of $\Pi$ and $\bigcup_{k} BP^{[k]}$.
    On the one hand, the size of $\bigcup_{k} BP^{[k]}$ is at most $4n$, since we have at most 4 pieces in $h_{k}$.
    On the other hand, $\Pi$ includes all the solutions of $h_k(\omega)=h_\ell(\omega)$ for every $k\neq\ell$.
    This size is upper bound by $\binom{n}{2} \times \max_{k\neq\ell} \Pi^{[k,\ell]}$. Observe that to solve $h_k(\omega)=h_\ell(\omega)$, we need to solve at most 16 linear equations
    \footnote{
    In the general case for two piece-wise linear functions with $4$ pieces, the maximum number of intersections Leveraging their specific ``triangular-pulse'' form,it is easy to verify that the maximum number  is $4$ and with an even more detailed case study the maximum number of intersections is actually $2$, because the 
slopes of the  ``triangular-pulse'' is the same for all $h_k$ functions.}, each having at most one solution, since the pairs $(\alpha_{j}^{k},\beta_{j}^{k})$ are unique. Thus $\mathbb{J}=O(n^2)+O(n)=O(n^2)$.
    \item The time complexity of construction $\mathbb{J}$ is the sum of the elapsed time to compute all the $4n$ breakpoints $BP^{[k]}$, which can be done in $O(n^4)$ and
    compute the solutions of $O(n^2)$ linear equations and sorting their union $O(n^2\log n)$. Thus, indeed we can construct $\mathbb{J}$ in $\operatorname{poly}(n)$ time.
    \item To compute the constants $(\alpha_i,\beta_i)$ : $\min_{k}h_k(\omega)=\alpha_i\omega+\beta_i \ \ \forall \omega\in J_i$, we can start from left to right.
    We compute the relative ordering of $h_1(-B),\cdots,h_n(-B)$. For each solution $h_k(\omega)=h_\ell(\omega)$, we update the relative ordering and for each
    breakpoint of $h_{k}$ we update its current linear form.
\end{itemize}
\end{proof}
\begin{lemma}
For a given a date-set $X=(X_1,\cdots,X_n)$ and its median $m(X)$, there exists a $\operatorname{poly}(n)-$time protocol that generates a sample from $\mathcal{D}_{\text{general}}$.
\end{lemma}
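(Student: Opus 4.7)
The plan is to leverage the structural decomposition of $\textsc{UnNormalized}(X,\cdot)$ provided by Lemma \ref{appendix:lemma:algo:concatenation-laplacian} in order to reduce sampling from $\mathcal{D}_{\text{general}}$ to the two-phase protocol already used for $\mathcal{D}_{\text{restricted}}$ in Lemma \ref{lem:restr_time}. Specifically, I would first invoke Lemma \ref{appendix:lemma:algo:concatenation-laplacian} on the input $X$ to compute, in $\operatorname{poly}(n)$ time, the collection of consecutive intervals $\mathbb{J}=\{J_1,\dots,J_{\operatorname{poly}(n)}\}$ partitioning $[-B,B]=\myregion$ together with the pairs of constants $(\alpha_i,\beta_i)$ satisfying $\textsc{UnNormalized}(X,\omega)=\exp(\alpha_i\omega+\beta_i)$ for all $\omega\in J_i$.

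Next, for every $J_i=[\ell_i,u_i]$, I would compute in $O(1)$ time the unnormalized mass
\[
p_i \;=\; \int_{\ell_i}^{u_i} \exp(\alpha_i\omega+\beta_i)\,\mathrm{d}\omega \;=\;
\begin{cases}
\dfrac{\exp(\alpha_i u_i+\beta_i)-\exp(\alpha_i \ell_i+\beta_i)}{\alpha_i}, & \alpha_i\neq 0,\\[0.5em]
(u_i-\ell_i)\exp(\beta_i), & \alpha_i=0,
\end{cases}
\]
and set $Z_X=\sum_i p_i$. Since $|\mathbb{J}|$ is polynomial, this takes $\operatorname{poly}(n)$ arithmetic operations.

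With these pre-computations in hand, the sampler proceeds in two phases, exactly mirroring Algorithm \ref{alg:RestrictedEstimator}. In the first phase it draws an index $i\in\{1,\dots,|\mathbb{J}|\}$ from the discrete distribution with probabilities $p_i/Z_X$; this is a standard draw from a $\operatorname{poly}(n)$-support discrete distribution and can be implemented in $\operatorname{poly}(n)$ time (e.g.\ by inverse CDF on the prefix sums). In the second phase, conditional on $i$, it samples $\omega\in J_i$ from the density proportional to $\exp(\alpha_i\omega+\beta_i)$ restricted to $J_i$: when $\alpha_i=0$ this is $\textsc{Uniform}[J_i]$, and when $\alpha_i\neq 0$ it is a truncated (positive or negative) exponential on $J_i$, both of which are provided by the oracle $\mathcal{O}$ from \Cref{appendix:truncated-sampling} in $O(1)$ time. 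Correctness is immediate, since for any measurable $S\subseteq[-B,B]$ the probability that the output lies in $S$ equals $\sum_i (p_i/Z_X)\cdot p_i^{-1}\int_{S\cap J_i}\exp(\alpha_i\omega+\beta_i)\mathrm{d}\omega = Z_X^{-1}\int_S \textsc{UnNormalized}(X,\omega)\,\mathrm{d}\omega$, which is precisely the density of $\mathcal{D}_{\text{general}}$.

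The main obstacle is not the sampling phase itself, which is a routine two-phase mixture-plus-truncation procedure, but rather justifying that the pre-processing of $\textsc{UnNormalized}(X,\cdot)$ is computationally tractable. That obstacle has already been removed by Lemma \ref{appendix:lemma:algo:concatenation-laplacian}, so what remains is only the bookkeeping described above; the total time is dominated by the $\operatorname{poly}(n)$ cost of constructing $\mathbb{J}$ and $(\alpha_i,\beta_i)$.
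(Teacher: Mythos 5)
Your proposal is correct and follows essentially the same route as the paper: invoke Lemma \ref{appendix:lemma:algo:concatenation-laplacian} to obtain the $\operatorname{poly}(n)$ partition $\mathbb{J}$ with piecewise-exponential densities, precompute the masses $p_i$, and then sample via the standard two-phase (mixture selection followed by truncated exponential/uniform) procedure backed by the oracle of \Cref{appendix:truncated-sampling}. The paper's Algorithm~\ref{alg:GeneralEstimator} is exactly this, and your explicit closed forms for $p_i$ and the correctness computation are just the details the paper leaves implicit.
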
 
\begin{proof}
We use the decomposition described in \Cref{appendix:lemma:algo:concatenation-laplacian}.
In the first round we  sample from a discrete distribution on a $\operatorname{poly}(n)$-cardinality domain  to decide among the regions $J_1,\cdots,J_{\operatorname{poly}(n)} $. In the second round we  simply apply conditional sampling from the corresponding either exponential or uniform  which will
be truncated on the region which was the outcome of the first round.
More specifically:

\begin{algorithm}[h!]
Execute $\textsc{Construction}-(\mathbb{J},\pmb{\alpha},\pmb{\beta})(X=(X_1,\ldots,X_n))$\\
Let $\{(J_i,\alpha_i,\beta_i)\}_m$ be the $\operatorname{poly}(n)$-decomposition of $\mathcal{I}=[-B,B]$ as described in \cref{appendix:lemma:algo:concatenation-laplacian}\\
//\texttt{  \text{For any }}$J_i \quad :\textsc{UnNormalized}(X,\omega)=
 \exp\left(\alpha_i \omega+\beta_i\right) \quad \forall \omega\in J_i$ \\
 \For{$i\in[m]$}
 {
 $p_i=\displaystyle\int_{J_i}\exp\left(\alpha_i \omega+\beta_i\right)\mathrm{d}\omega$
 }
Let $p=\sum_{i\in [m]} p_i$\\
Toss a $m$-nary coin $c:=\{1,\cdots,m\}$ with probability $(\tfrac{p_1}{p},\cdots,\tfrac{p_m}{p})$, correspondingly.\\
\If{$c$ outputs $\mathcal{C}$}
{
      $s \gets \text{Sample from }\textsc{TruncatedExponential}\left[\alpha=\alpha_i,\beta=\beta_i,I=J_i\right]$ 
}
\Return $s$
\caption{$\text{Sample from }\mathcal{D}_{\text{general}}$}
\label{alg:GeneralEstimator}
\end{algorithm}
The correctness of our estimator is derived by \Cref{appendix:lemma:algo:concatenation-laplacian}.As we showed, the size of $J=\{(J_i,\alpha_i,\beta_i)\}_m$ is  $\operatorname{poly}(n)$ and we need $\operatorname{poly}(n)$ time to compute $\{(J_i,\alpha_i,\beta_i)\}_m$. Our final step is to toss a $m$-nary coin which needs
an extra $\Theta(m)=\operatorname{poly}(n)$ time.
\end{proof}
\subsubsection{Average-case Time Complexity of \textsc{PrivateMedian}  }\label{appendix:average-case}
In this section we discuss about the average-case time complexity of the algorithm $\textsc{PrivateMedian}(X=(X_1,\ldots,X_n)) $ which implements the algorithm $\mathcal{A}$ defined in \eqref{alg:general}. We consider running the algorithm for $C=C_0 \log n$ for some appropriate constant $C_0>0.$ Note that this is different from the worst-case analysis section where it discusses the algorithm for a sufficiently large constant $C>0$ and establishes its rate-optimality. We establish the following Corollary establishing that the algorithm remains rate-optimal up to logarithmic factors for this value of $C$. 
\begin{coro}\label{cor:upper_bound}
Suppose $C=C_0 \log n$ for some constant $C_0>0.$ Suppose also $\epsilon \in (0,1)$, $\mathcal{D}$ is an admissible distribution and $\mathcal{A}$ is the $\epsilon$-differentially private algorithm defined in \eqref{alg:general} for this value of $C$. Then for any $\alpha \in (0,r)$ and $ \beta \in (0,1)$ for some $$ n=\tilde{O}\left( \frac{\log \left( \frac{1}{\beta}\right)}{L^2 \alpha^2}+\frac{ \log \left(\frac{1}{\beta}\right)}{\epsilon L\alpha }+\frac{\log \left(\frac{R}{ \alpha}+1\right)}{\epsilon Lr}\right)$$ it holds $\quad\quad \quad \mathbb{P}_{X_1,X_2,\ldots,X_n \overset{iid}{\sim} \mathcal{D}} [ |\mathcal{A}(X_1,\ldots,X_n)-m\left(\mathcal{D}\right)|  \geq \alpha] \leq \beta.$
\end{coro}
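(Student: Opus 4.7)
The plan is to derive Corollary~\ref{cor:upper_bound} directly from Theorem~\ref{thm:upp_gen}(ii), combined with the arithmetic equivalence recorded in the remark immediately following that theorem. No new analysis is required beyond what is already present in the paper; the corollary is really just a polylog-friendly repackaging of the accuracy statement of Theorem~\ref{thm:upp_gen}(ii).

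First I would invoke Theorem~\ref{thm:upp_gen} with the scaling $C_n = C_0 \log n$, where $C_0 > 0$ is a constant chosen at least as large as the ``sufficiently large constant'' threshold required by the theorem (so the hypotheses are satisfied for all $n$ beyond an absolute constant). Part~(ii) then guarantees that the $\epsilon$-differentially private algorithm $\mathcal{A}$ defined in~\eqref{alg:general} for this value of $C$ produces, with probability at least $1-\beta$, an estimate $\hat m$ with $|m(\mathcal{D}) - \hat m|\le\alpha$ whenever
\begin{equation*}
n \;=\; O\!\left(\frac{\log(1/\beta)}{L^2\alpha^2} \;+\; C_0(\log n)\cdot\frac{\log(1/\beta)}{\epsilon L\alpha} \;+\; \frac{\log(R/\alpha+1)}{\epsilon Lr}\right).
\end{equation*}

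Second, I would absorb the spurious factor of $\log n$ in the middle term by the elementary equivalence stated in the remark: for diverging sequences, $A_n/\log A_n = \Omega(B_n)$ if and only if $A_n = \Omega(B_n \log B_n)$. Applied with $A_n = n$ and $B_n = \log(1/\beta)/(\epsilon L\alpha)$, the constraint $n \gtrsim (\log n)\cdot B_n$ is implied by $n = \Omega(B_n \log B_n) = \tilde O(B_n)$. The other two terms are free of any $\log n$ dependence and carry over unchanged, yielding
\begin{equation*}
n \;=\; \tilde O\!\left(\frac{\log(1/\beta)}{L^2\alpha^2} \;+\; \frac{\log(1/\beta)}{\epsilon L\alpha} \;+\; \frac{\log(R/\alpha+1)}{\epsilon Lr}\right),
\end{equation*}
which is precisely the sample-size condition claimed in the corollary.

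The only genuinely non-mechanical checkpoint — and the reason Theorem~\ref{thm:upp_gen}(ii) tolerates $C_n$ scaling with $n$ in the first place — is that each ingredient of the accuracy proof of Theorem~\ref{thm:upper_bound} survives when $C$ is allowed to grow logarithmically. I would verify three points: (i) the typical-set tail hypothesis $4Ce\,e^{-2C/27}<1/2$ in Lemma~\ref{lemma:typical-set-median} is only strengthened as $C$ grows, and the probability of an $X' \in \mathcal{H}$ within Hamming distance $O(\log(1/\beta))$ of $X$ remains $\ge 1-\beta$ for $n$ as in the theorem; (ii) the sensitivity bound $3C/(Ln)$ in Lemma~\ref{lemma:sensitivity-median} scales linearly in $C$, which is exactly the origin of the $C_n$ factor appearing in the middle sample-complexity term via the flattened-Laplace scale in Lemma~\ref{lemma:accuracy-typical0}; and (iii) the privacy guarantee of $\hat{\mathcal{A}}$ in Lemma~\ref{lem:restricted-DP} together with the Extension Lemma is entirely independent of the numerical value of $C$. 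This is the main obstacle one must address to justify the logarithmic scaling; once these three items are confirmed, the two-step reduction above yields the corollary immediately.
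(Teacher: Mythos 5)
Your proposal is correct and follows essentially the same route as the paper: the paper proves the corollary by applying Theorem \ref{thm:upper_bound0} (the version of Theorem \ref{thm:upper_bound} that allows $C$ to scale with $n$, which is exactly the accuracy content of Theorem \ref{thm:upp_gen}) with $C=C_0\log n$, and then absorbing the resulting $\log n$ factor in the middle term via the same elementary equivalence $n/\log n=\Omega(A)\iff n=\Omega(A\log A)$. The only cosmetic difference is that you cite Theorem \ref{thm:upp_gen}(ii) rather than Theorem \ref{thm:upper_bound0} directly; since the corollary sits upstream of Theorem \ref{thm:upp_gen} in the paper's logical ordering, citing \ref{thm:upper_bound0} is the cleaner choice, but your three verification checkpoints are precisely the content that \ref{thm:upper_bound0} already establishes.
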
The corollary follows directly by applying for this value of $C$ the Theorem \ref{thm:upper_bound0} (a slight generalization of Theorem \ref{thm:upper_bound}, using a constant $C>0$ possible scaling with $n$) and using the basic asymptotic formula that for $A=\Omega(1),$ it holds $n/\log n =\Omega \left(A\right)$ if and only if $n =\Omega \left(A \log A\right)$.  In particular, Corollary \ref{cor:upper_bound} combined with the results of Section \ref{sec:LB} allows us to conclude that the algorithm where $C$ scaling logarithmically with $n$ remains rate-optimal up to logarithmic factors.

We now show that for appropriate tuning of the parameter $C_0>0$ and under a weak bound on the sampling size $n$, satisfied for instance by the third term in the almost optimal rate of Corollary \ref{cor:upper_bound}, the algorithm runs in average-case in $\tilde{O}(n)$ time.

\begin{theorem}
There exists constants $C_0,D_0>0$ such that if $C=C_0 \log n$ and $T$ is the termination of time of $\textsc{PrivateMedian}(X=(X_1,\ldots,X_n)) $ for this choice of $C>0$ the following holds. If $n \geq D_0 \frac{1}{Lr} \log  (\frac{1}{Lr}),$ then \begin{equation*}
    \mathbb{E}\left[T\right]=O\left(n \log n\right).
\end{equation*}\end{theorem}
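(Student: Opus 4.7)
The plan is to split the expected running time into the contributions from the typical and atypical cases of the input $X$. The preprocessing (sorting and the membership test $X \in \mathcal{H}$, which requires counting binary searches over the $O(Lnr/C)$ concentration constraints) always runs in $O(n \log n)$ time. In Case (A), when $X \in \mathcal{H}$, Lemma \ref{lem:restr_time} adds only $O(1)$ time; in Case (B), Lemma \ref{appendix:lemma:algo:concatenation-laplacian} and the ensuing sampler run in worst-case $\operatorname{poly}(n)$ time, the detailed bound being $O(n^4)$. Setting $p := \mathbb{P}(X \notin \mathcal{H})$, we obtain
\[
\mathbb{E}[T] \;\leq\; O(n \log n) \;+\; p \cdot O(n^{4}),
\]
so it suffices to show $p = O(n^{-3})$ for an appropriate choice of $C_0$.

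To bound $p$, I would analyze the three families of failure events in the definition of $\mathcal{H}$ in Equation~\eqref{equation:sensitivity-set-median}: (i) the empirical-median range constraint $m(X) \in [-R-r/2, R+r/2]$, and (ii) the two families of $\kappa$-concentration constraints. For (ii), first condition on the event $\mathcal{E} := \{|m(X) - m(\mathcal{D})| \leq r/4\}$, on which the intervals $[m(X), m(X) + \kappa C/(Ln)]$ and their reflections sit inside the admissibility window $[m(\mathcal{D}) - r, m(\mathcal{D}) + r]$. By Assumption~\ref{assumption:median-mass}, each such interval carries $\mathcal{D}$-mass at least $C\kappa/n$, so the sample count in it stochastically dominates a $\mathrm{Bin}(n-1, C\kappa/n)$ random variable of mean at least $(3/4)C\kappa$. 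A standard Chernoff bound then yields that this count is less than $\kappa+1$ with probability at most $\exp(-c\, C_0 \kappa \log n) = n^{-c C_0 \kappa}$ for an absolute constant $c>0$, once $C_0$ exceeds a threshold. Summing the geometric series in $\kappa \geq 1$ and doubling for the two sides gives a contribution of $O(n^{-cC_0})$ to $p$.

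For event (i), standard empirical-median concentration, as in \cite[Lemma 3]{ptr20}, gives a failure probability of $\exp(-\Omega(L^2 n r^2))$. This quantity is exactly what the hypothesis $n \geq D_0 \tfrac{1}{Lr} \log \tfrac{1}{Lr}$ is designed to control: combined with the choice $C = C_0 \log n$ and the constraint $Lr \leq 1/2$, the hypothesis ensures that both $1-\mathbb{P}(\mathcal{E})$ and the empirical-median range violation probability are at most $n^{-3}$ whenever $D_0$ is taken sufficiently large relative to $C_0$. Choosing $C_0$ so that $c C_0 \geq 4$ and $D_0$ large enough to control (i), we obtain $p = O(n^{-3})$ and therefore $\mathbb{E}[T] = O(n \log n)$.

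The principal obstacle I anticipate is the passage from Chernoff estimates for intervals centered at the deterministic median $m(\mathcal{D})$ to the random centers $m(X)$ that define $\mathcal{H}$. This is handled by the conditioning on $\mathcal{E}$ described above, which uses the density floor $L$ throughout the admissibility window around $m(\mathcal{D})$ to transfer the mass estimate to intervals around $m(X)$ without losing more than a constant factor. A subsidiary subtlety is the interplay between the growth of $C = C_0 \log n$ and the range size $Lnr/(2C)$ for $\kappa$: the assumed lower bound on $n$ is precisely what ensures that the median-concentration tail and the Chernoff tails for the $\kappa$-constraints are simultaneously strong enough to yield polynomially decaying $p$.
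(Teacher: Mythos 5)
Your top-level decomposition is exactly the paper's: condition on $X\in\mathcal{H}$ vs.\ $X\notin\mathcal{H}$, bound the typical case by $O(n\log n)$, bound the atypical case by the worst-case polynomial running time $O(n^M)$, and then make $\mathbb{P}(X\notin\mathcal{H})$ decay fast enough in $n$. The difference is that the paper simply invokes its already-proven Lemma \ref{lemma:typical_set_general} (with $T=1$) together with the median-concentration inequality from \cite{Subgaussian}, whereas you try to re-derive the bound on $\mathbb{P}(X\notin\mathcal{H})$ from scratch. That re-derivation contains a real gap.

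The gap is in the sentence claiming that, after conditioning on $\mathcal{E}:=\{|m(X)-m(\mathcal{D})|\le r/4\}$, the count $\sum_i\mathbf{1}\{X_i-m(X)\in[0,\kappa C/(Ln)]\}$ ``stochastically dominates a $\mathrm{Bin}(n-1, C\kappa/n)$ random variable.'' This would be true if the interval were deterministic, but the interval is centered at $m(X)$, which is itself a function of the samples $X_1,\ldots,X_n$. Conditioning on $\mathcal{E}$ restricts to a high-probability event but does not decouple $m(X)$ from the samples; the indicator $\mathbf{1}\{X_i-m(X)\in[0,\kappa C/(Ln)]\}$ is still a dependent function of the whole vector, so there is no justification for a binomial lower bound. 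The correct device --- the one the paper's Lemma \ref{lemma:typical_set_general} actually uses --- is to condition on the exact value of $m(X)$ and on which coordinate equals it: conditional on this, the remaining samples split into $\lceil (n-1)/2\rceil$ i.i.d.\ draws from $\mathcal{D}$ restricted to $[m(X),\infty)$ and $\lfloor (n-1)/2\rfloor$ from $\mathcal{D}$ restricted to $(-\infty,m(X)]$, and only \emph{then} does the count in each one-sided window become a genuine binomial to which Chernoff applies. This conditioning also accounts for the random number $n_1$ of samples falling in the density-lower-bounded region (a secondary dependence your sketch does not address, and which the paper handles by conditioning on the binomial $n_1$). With this fix your outline recovers the paper's bound $\mathbb{P}(X\notin\mathcal{H})=O\bigl(C\,e^{-C/8}+e^{-\Theta(L^2r^2n)}\bigr)$, after which your choice of $C=C_0\log n$ and the hypothesis $n\ge D_0\frac{1}{Lr}\log\frac{1}{Lr}$ give $p=O(n^{-cC_0})$ as you intended.
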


\begin{proof}
We have by the law of total expectation,
\begin{align}
      \mathbb{E}\left[T\right]&= \mathbb{P}\left[X \in \mathcal{H} \right] \mathbb{E}\left[T | X \in \mathcal{H}\right]  +\mathbb{P}\left[X \not \in \mathcal{H} \right]  \mathbb{E}\left[T | X \not \in \mathcal{H}\right] \nonumber \\
      & \leq   \mathbb{E}\left[T | X \in \mathcal{H}\right]  +\mathbb{P}\left[X \not \in \mathcal{H} \right]  \mathbb{E}\left[T | X \not \in \mathcal{H}\right] \label{eq:totalprob}.
\end{align}We now consider each term separately.

In the case where $X \in \mathcal{H}$ we need to check that $X \in \mathcal{H}$ to confirm the inclusion to the set $\mathcal{H}$ and then sample from the $\mathcal{D}_{\text{restricted}}$ distribution. For the first part, standard $O(n \log n)$ counting binary searches and locating the value of the median $m(X)$ suffices to certify that $X \in \mathcal{H}.$ Furthermore, given Lemma \ref{lem:restr_time}, since we have located the median $m(X)$, it takes $O(1)$-time to sample from $\mathcal{D}_{\text{restricted}}.$ In conclusion \begin{equation}\label{first_trivial}
     \mathbb{E}\left[T | X \in \mathcal{H}\right] =O\left(n \log n\right).
\end{equation}

For the second part, since the algorithm runs in worst case polynomial-time there is a constant $M>0$ such that \begin{equation}\label{eq:poly}
    \mathbb{E}\left[T | X \not \in \mathcal{H}\right] =O\left(n^M\right).
\end{equation} Now we focus on bounding $\mathbb{P}\left[X \not \in \mathcal{H} \right].$ 
Notice that by identical reasoning as the derivation of inequality (14) of \cite{Subgaussian} in the proof of Lemma 3 in \cite{Subgaussian} we have for $t=r/2 \in [0,r]$ 
\begin{equation*} \label{eq:marco2} \mathbb{P}\left(|m(X)-m(\mathcal{D})| \geq r/2\right) \leq 2e^{-nL^2r^2/8}.
\end{equation*}Since $|m\left(\mathcal{D}\right)| \leq R$ we conclude 
\begin{equation} \label{eq:marco22} \mathbb{P}\left( m(X) \not \in [-R-r/2,R+r/2]\right) \leq 2e^{-nL^2r^2/8}.
\end{equation}Furthermore, notice that since $C=C_0 \log n$ by assuming $n$ is bigger than a constant, Lemma \ref{lemma:typical_set_general} can be applied with $T=1$. Note that for $T=1$, all counting constraints of $\mathcal{H}$ are considered and therefore, combined with \eqref{eq:marco22} we have \begin{equation*}
    \mathbb{P}\left[X \not \in \mathcal{H} \right] \leq O\left(  C_0\log n\exp\left( -\frac{C_0\log n}{8}\right)+e^{-\Theta \left( L^2r^2n\right)}\right).
\end{equation*}Combining with \eqref{eq:poly} we have 
\begin{equation*}
    \mathbb{P}\left[X \not \in \mathcal{H} \right]\mathbb{E}\left[T | X \not \in \mathcal{H}\right]  \leq O\left( n^M C_0\log n\exp\left( -\frac{C_0\log n}{8}\right)+n^M e^{-\Theta \left( L^2r^2n\right)}\right).
\end{equation*}Now using standard asymptotics there are  $C_0,D_0>0$ sufficiently large constants dependent on $M$ for which if $n \geq D_0 \frac{1}{Lr} \log  (\frac{1}{Lr}),$ then 
\begin{equation*}
    \mathbb{P}\left[X \not \in \mathcal{H} \right]\mathbb{E}\left[T | X \not \in \mathcal{H}\right]  \leq O\left(1\right).
\end{equation*}Combining with \eqref{first_trivial} and \eqref{eq:totalprob} we conclude
\begin{align}
      \mathbb{E}\left[T\right] \leq O\left(n \log n\right).
      \end{align}This completes the proof of the Theorem.
\end{proof}
\section*{Acknowledgements}

\textcolor{black}{I.Z. is grateful to Adam Smith for bringing the private median estimation problem to his attention.  E.V would like to thank Lampros Flokas, Ana-Andrea Stoica and Myrto Pantelaki for the helpful discussions at camera-ready stage of this project.}

\begin{flushleft}
\bibliographystyle{alpha}
\bibliography{references}

\newcommand{\etalchar}[1]{$^{#1}$}
\begin{thebibliography}{KRWY16}

\bibitem[AD20]{Duch20near}
Hilal Asi and John~C. Duchi.
\newblock Near instance-optimality in differential privacy, 2020.

\bibitem[BA19]{Subgaussian}
Victor{-}Emmanuel Brunel and Marco Avella{-}Medina.
\newblock Differentially private sub-gaussian location estimators.
\newblock {\em arXiv}, 2019.

\bibitem[BA20]{ptr20}
Victor{-}Emmanuel Brunel and Marco Avella{-}Medina.
\newblock Propose, test, release: Differentially private estimation with high
  probability.
\newblock {\em CoRR}, abs/2002.08774, 2020.

\bibitem[BBDS13]{BlockiBDS13}
Jeremiah Blocki, Avrim Blum, Anupam Datta, and Or~Sheffet.
\newblock Differentially private data analysis of social networks via
  restricted sensitivity.
\newblock In Robert~D. Kleinberg, editor, {\em Innovations in Theoretical
  Computer Science, {ITCS} '13, Berkeley, CA, USA, January 9-12, 2013}, pages
  87--96. {ACM}, 2013.

\bibitem[BCS15]{BorgsNeurIPS15}
Christian Borgs, Jennifer~T. Chayes, and Adam Smith.
\newblock Private graphon estimation for sparse graphs.
\newblock In {\em Proceedings of the 28th International Conference on Neural
  Information Processing Systems - Volume 1}, NIPS’15, page 1369–1377,
  Cambridge, MA, USA, 2015. MIT Press.

\bibitem[BCSZ18a]{borgs2018private}
Christian Borgs, Jennifer Chayes, Adam Smith, and Ilias Zadik.
\newblock Private algorithms can always be extended.
\newblock {\em arXiv preprint arXiv:1810.12518}, 2018.

\bibitem[BCSZ18b]{borgs2018revealing}
Christian Borgs, Jennifer Chayes, Adam Smith, and Ilias Zadik.
\newblock Revealing network structure, confidentially: Improved rates for
  node-private graphon estimation.
\newblock In {\em 2018 IEEE 59th Annual Symposium on Foundations of Computer
  Science (FOCS)}, pages 533--543. IEEE, 2018.

\bibitem[BCSZ18c]{BorgsCSZ18}
Christian Borgs, Jennifer~T. Chayes, Adam~D. Smith, and Ilias Zadik.
\newblock Revealing network structure, confidentially: Improved rates for
  node-private graphon estimation.
\newblock In Mikkel Thorup, editor, {\em 59th {IEEE} Annual Symposium on
  Foundations of Computer Science, {FOCS} 2018, Paris, France, October 7-9,
  2018}, pages 533--543. {IEEE} Computer Society, 2018.

\bibitem[BKSW19]{PHS}
Mark Bun, Gautam Kamath, Thomas Steinke, and Zhiwei~Steven Wu.
\newblock Private hypothesis selection.
\newblock {\em CoRR}, abs/1905.13229, 2019.

\bibitem[CD20]{CummingsD20}
Rachel Cummings and David Durfee.
\newblock Individual sensitivity preprocessing for data privacy.
\newblock In Shuchi Chawla, editor, {\em Proceedings of the 2020 {ACM-SIAM}
  Symposium on Discrete Algorithms, {SODA} 2020, Salt Lake City, UT, USA,
  January 5-8, 2020}, pages 528--547. {SIAM}, 2020.

\bibitem[CDC19]{mediancdc}
CDC.
\newblock {\em Principles of Epidemiology in Public Health Practice, Third
  Edition An Introduction to Applied Epidemiology and Biostatistics}, 2019.

\bibitem[CKS20]{DiscreteGaussian}
Cl{\'{e}}ment~L. Canonne, Gautam Kamath, and Thomas Steinke.
\newblock The discrete gaussian for differential privacy.
\newblock {\em CoRR}, abs/2004.00010, 2020.

\bibitem[Dav95]{david1995first}
Herbert~A David.
\newblock First (?) occurrence of common terms in mathematical statistics.
\newblock {\em The American Statistician}, 49(2):121--133, 1995.

\bibitem[DCKP06]{dicker2006principles}
Richard~C Dicker, Fatima Coronado, Denise Koo, and R~Gibson Parrish.
\newblock Principles of epidemiology in public health practice; an introduction
  to applied epidemiology and biostatistics.
\newblock {\em Centers for Disease Control and Prevention and others}, 2006.

\bibitem[DE13]{DankarE13}
Fida~Kamal Dankar and Khaled~El Emam.
\newblock Practicing differential privacy in health care: {A} review.
\newblock {\em Trans. Data Priv.}, 6(1):35--67, 2013.

\bibitem[DG07]{davies2007breakdown}
PL~Davies and Ursula Gather.
\newblock The breakdown point—examples and counterexamples.
\newblock {\em REVSTAT Statistical Journal}, 5(1):1--17, 2007.

\bibitem[DL09]{DworkL09}
Cynthia Dwork and Jing Lei.
\newblock Differential privacy and robust statistics.
\newblock In Michael Mitzenmacher, editor, {\em Proceedings of the 41st Annual
  {ACM} Symposium on Theory of Computing, {STOC} 2009, Bethesda, MD, USA, May
  31 - June 2, 2009}, pages 371--380. {ACM}, 2009.

\bibitem[DMNS06]{DworkMNS06}
Cynthia Dwork, Frank McSherry, Kobbi Nissim, and Adam~D. Smith.
\newblock Calibrating noise to sensitivity in private data analysis.
\newblock In Shai Halevi and Tal Rabin, editors, {\em Theory of Cryptography,
  Third Theory of Cryptography Conference, {TCC} 2006, New York, NY, USA, March
  4-7, 2006, Proceedings}, volume 3876 of {\em Lecture Notes in Computer
  Science}, pages 265--284. Springer, 2006.

\bibitem[DR14]{DworkR14}
Cynthia Dwork and Aaron Roth.
\newblock The algorithmic foundations of differential privacy.
\newblock {\em Foundations and Trends in Theoretical Computer Science},
  9(3-4):211--407, 2014.

\bibitem[DSSU17]{dwork2017exposed}
Cynthia Dwork, Adam Smith, Thomas Steinke, and Jonathan Ullman.
\newblock Exposed! a survey of attacks on private data.
\newblock {\em Annual Review of Statistics and Its Application}, 4(1):61--84,
  2017.

\bibitem[DWJ16]{DuchiWJ16}
John~C. Duchi, Martin~J. Wainwright, and Michael~I. Jordan.
\newblock Minimax optimal procedures for locally private estimation.
\newblock {\em CoRR}, abs/1604.02390, 2016.

\bibitem[Dwo06]{Dwork06}
Cynthia Dwork.
\newblock Differential privacy.
\newblock In Michele Bugliesi, Bart Preneel, Vladimiro Sassone, and Ingo
  Wegener, editors, {\em Automata, Languages and Programming, 33rd
  International Colloquium, {ICALP} 2006, Venice, Italy, July 10-14, 2006,
  Proceedings, Part {II}}, volume 4052 of {\em Lecture Notes in Computer
  Science}, pages 1--12. Springer, 2006.

\bibitem[FLJ{\etalchar{+}}14]{FredriksonLJLPR14}
Matthew Fredrikson, Eric Lantz, Somesh Jha, Simon Lin, David Page, and Thomas
  Ristenpart.
\newblock Privacy in pharmacogenetics: An end-to-end case study of personalized
  warfarin dosing.
\newblock In Kevin Fu and Jaeyeon Jung, editors, {\em Proceedings of the 23rd
  {USENIX} Security Symposium, San Diego, CA, USA, August 20-22, 2014}, pages
  17--32. {USENIX} Association, 2014.

\bibitem[G{\etalchar{+}}82]{galton1882report}
Francis Galton et~al.
\newblock Report of the anthropometric committee.
\newblock In {\em Report of the 51st Meeting of the British Association for the
  Advancement of Science}, volume 1881, pages 245--260, 1882.

\bibitem[GKK{\etalchar{+}}20]{LocallyPrivateHypothesisSelection}
Sivakanth Gopi, Gautam Kamath, Janardhan Kulkarni, Aleksandar Nikolov,
  Zhiwei~Steven Wu, and Huanyu Zhang.
\newblock Locally private hypothesis selection.
\newblock {\em CoRR}, abs/2002.09465, 2020.

\bibitem[HR81]{huberbreakdown}
PJ~Huber and EM~Ronchetti.
\newblock Breakdown point.
\newblock {\em Robust Statistics}, page~8, 1981.

\bibitem[HRRS11]{hampel2011robust}
Frank~R Hampel, Elvezio~M Ronchetti, Peter~J Rousseeuw, and Werner~A Stahel.
\newblock {\em Robust statistics: the approach based on influence functions},
  volume 196.
\newblock John Wiley \& Sons, 2011.

\bibitem[Hub81]{Huber81}
Peter~J. Huber.
\newblock {\em Robust Statistics}.
\newblock Wiley Series in Probability and Statistics. Wiley, 1981.

\bibitem[KLSU19]{Kamath0SU19}
Gautam Kamath, Jerry Li, Vikrant Singhal, and Jonathan Ullman.
\newblock Privately learning high-dimensional distributions.
\newblock In Alina Beygelzimer and Daniel Hsu, editors, {\em Conference on
  Learning Theory, {COLT} 2019, 25-28 June 2019, Phoenix, AZ, {USA}}, volume~99
  of {\em Proceedings of Machine Learning Research}, pages 1853--1902. {PMLR},
  2019.

\bibitem[KNRS13]{KasiviswanathanNRS13}
Shiva~Prasad Kasiviswanathan, Kobbi Nissim, Sofya Raskhodnikova, and Adam~D.
  Smith.
\newblock Analyzing graphs with node differential privacy.
\newblock In Amit Sahai, editor, {\em Theory of Cryptography - 10th Theory of
  Cryptography Conference, {TCC} 2013, Tokyo, Japan, March 3-6, 2013.
  Proceedings}, volume 7785 of {\em Lecture Notes in Computer Science}, pages
  457--476. Springer, 2013.

\bibitem[KRWY16]{Kearns913}
Michael Kearns, Aaron Roth, Zhiwei~Steven Wu, and Grigory Yaroslavtsev.
\newblock Private algorithms for the protected in social network search.
\newblock {\em Proceedings of the National Academy of Sciences},
  113(4):913--918, 2016.

\bibitem[KSSU19]{Mixtures}
Gautam Kamath, Or~Sheffet, Vikrant Singhal, and Jonathan Ullman.
\newblock Differentially private algorithms for learning mixtures of separated
  gaussians.
\newblock {\em CoRR}, abs/1909.03951, 2019.

\bibitem[KSU20]{HeavyTailed}
Gautam Kamath, Vikrant Singhal, and Jonathan Ullman.
\newblock Private mean estimation of heavy-tailed distributions.
\newblock {\em CoRR}, abs/2002.09464, 2020.

\bibitem[KV18]{KarwaV18}
Vishesh Karwa and Salil~P. Vadhan.
\newblock Finite sample differentially private confidence intervals.
\newblock In Anna~R. Karlin, editor, {\em 9th Innovations in Theoretical
  Computer Science Conference, {ITCS} 2018, January 11-14, 2018, Cambridge, MA,
  {USA}}, volume~94 of {\em LIPIcs}, pages 44:1--44:9. Schloss Dagstuhl -
  Leibniz-Zentrum f{\"{u}}r Informatik, 2018.

\bibitem[LXJ{\etalchar{+}}20]{liu2020local}
Peng Liu, YuanXin Xu, Quan Jiang, Yuwei Tang, Yameng Guo, Li-e Wang, and
  Xianxian Li.
\newblock Local differential privacy for social network publishing.
\newblock {\em Neurocomputing}, 391:273--279, 2020.

\bibitem[MEO13]{MalinEO13}
Bradley~A. Malin, Khaled~El Emam, and Christine~M. O'Keefe.
\newblock Biomedical data privacy: problems, perspectives, and recent advances.
\newblock {\em J. Am. Medical Informatics Assoc.}, 20(1):2--6, 2013.

\bibitem[RG20]{rigaki2020survey}
Maria Rigaki and Sebastian Garcia.
\newblock A survey of privacy attacks in machine learning.
\newblock {\em arXiv preprint arXiv:2007.07646}, 2020.

\bibitem[SU19]{SU19}
Adam Sealfon and Jonathan Ullman.
\newblock Efficiently estimating erdos-renyi graphs with node differential
  privacy.
\newblock {\em CoRR}, abs/1905.10477, 2019.

\bibitem[TC12]{task2012guide}
Christine Task and Chris Clifton.
\newblock A guide to differential privacy theory in social network analysis.
\newblock In {\em 2012 IEEE/ACM International Conference on Advances in Social
  Networks Analysis and Mining}, pages 411--417. IEEE, 2012.

\bibitem[YK{\"{O}}17]{YukselKO17}
Buket Y{\"{u}}ksel, Alptekin K{\"{u}}p{\c{c}}{\"{u}}, and {\"{O}}znur
  {\"{O}}zkasap.
\newblock Research issues for privacy and security of electronic health
  services.
\newblock {\em Future Gener. Comput. Syst.}, 68:1--13, 2017.

\bibitem[ZKKW20]{MRF}
Huanyu Zhang, Gautam Kamath, Janardhan Kulkarni, and Zhiwei~Steven Wu.
\newblock Privately learning markov random fields.
\newblock {\em CoRR}, abs/2002.09463, 2020.

\bibitem[ZLZ{\etalchar{+}}17]{ZhangLZHS17}
Jiajun Zhang, Xiaohui Liang, Zhikun Zhang, Shibo He, and Zhiguo Shi.
\newblock Re-dpoctor: Real-time health data releasing with w-day differential
  privacy.
\newblock In {\em 2017 {IEEE} Global Communications Conference, {GLOBECOM}
  2017, Singapore, December 4-8, 2017}, pages 1--6. {IEEE}, 2017.

\bibitem[ZLZP17]{zhu2017differentially}
Tianqing Zhu, Gang Li, Wanlei Zhou, and S~Yu Philip.
\newblock Differentially private social network data publishing.
\newblock In {\em Differential Privacy and Applications}, pages 91--105.
  Springer, 2017.

\end{thebibliography}
\end{flushleft}
\clearpage
\appendix
\section{Basic Definitions and the Extension Lemma}
\label{section:Appendix-Preliminaries}
\subsection{Basic Definitions}

A similar notion to $\epsilon$-differential privacy (Definition \ref{definition:epsilon-privacy}) is the notion of $(\epsilon,\delta)$-differential privacy.
\textcolor{black}{\begin{definition}\label{definition:epsilon-delta-privacy}
A randomized algorithm $\mathcal{A}$ is $(\epsilon,\delta)$-differential private if for all subsets $S \in \mathcal{F}$ of the output measurable space $(\Omega, \mathcal{F})$ and $n$-tuples of samples $X_1,X_2 \in \mathbb{R}^n$, such that $d_H(X_1,X_2)\le 1$ it holds \begin{equation} \label{delta-privdfn}\mathbb{P}\left(\mathcal{A}(X_1) \in S\right) \leq e^{\epsilon}\mathbb{P}\left(\mathcal{A}(X_2) \in S\right)+\delta.\end{equation}
\end{definition}It is rather straightforward that $(\epsilon,\delta)$-differential privacy is a \emph{weaker} notion to $\epsilon$-differential privacy, in the sense that for any $\epsilon,\delta>0$ any $\epsilon$-differentially private algorithm is also an $(\epsilon,\delta)$-differentially private algorithm.}

Of importance to us in the design of the algorithm desicred in the main body of this work is the notion of \emph{the left (empirical) median} of an $n$-tuple of samples.
\begin{definition}\label{definition:median}
For $x=(x_1,\ldots,x_n)\in\mathbb{R}^n$, we denote by $x_{(1)}, \ldots, x_{(n)}$ the reordered coordinates of $x$ in nondecreasing order, i.e. $\DS \min_{1\leq i\leq n} x_i=x_{(1)}\leq \ldots\leq x_{(n)}=\max_{1\leq i\leq n} x_i$. We let $\ell=\lfloor n/2\rfloor$ and $ m(x)=x_{(l)}$ be the empirical left median of $x$. 
\end{definition}

\subsection{On the Extension Lemma}\label{prem:extension}

In this section we provide more details on the Extension Lemma as stated in Proposition \ref{extension}.\textcolor{black}{We repeat it here for convenience.}

\begin{proposition}[``The Extension Lemma'' Proposition 2.1, \cite{borgs2018private}] \label{extensionApp} 
Let $\hat{\mathcal{A}}$ be an $\epsilon$-differentially private algorithm designed for input from $\mathcal{H} \subseteq \mathbb{R}^n$ with arbitrary output measure space $(\Omega,\mathcal{F})$. Then there exists a randomized algorithm $\mathcal{A}$ defined on the whole input space $\mathbb{R}^n$ with the same output space which is $2\epsilon$-differentially private and satisfies that for every $X \in \mathcal{H}$, $\mathcal{A}(X) \overset{d}{=}  \hat{\mathcal{A}}(X)$.
\end{proposition}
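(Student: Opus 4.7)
The plan is to follow the standard Lipschitz-extension recipe used in \cite{borgs2018private}, constructing $\mathcal{A}$ as an infimum-based smoothing of $\hat{\mathcal{A}}$. First I will fix a $\sigma$-finite reference measure $\nu$ on $(\Omega,\mathcal{F})$ dominating all output distributions $\mathbb{P}(\hat{\mathcal{A}}(X') \in \cdot)$ for $X' \in \mathcal{H}$; such a $\nu$ exists because $\epsilon$-differential privacy of $\hat{\mathcal{A}}$ forces these measures to be mutually absolutely continuous with Radon-Nikodym ratios in $[e^{-\epsilon n}, e^{\epsilon n}]$. Writing $f_{X'}(\omega) := d\mathbb{P}(\hat{\mathcal{A}}(X') \in \cdot)/d\nu\,(\omega)$, I will define $\mathcal{A}(X)$ to be the randomized algorithm whose law has density $g_X/Z_X$ with respect to $\nu$, where
$$g_X(\omega) := \inf_{X' \in \mathcal{H}} e^{\epsilon \, d_H(X, X')}\, f_{X'}(\omega) \quad \text{and} \quad Z_X := \int_\Omega g_X(\omega)\, d\nu(\omega).$$

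The first step is to verify the extension property $\mathcal{A}(X) \eqdist \hat{\mathcal{A}}(X)$ for $X \in \mathcal{H}$. Taking $X' = X$ in the infimum yields $g_X \le f_X$ immediately. Conversely, $\epsilon$-DP of $\hat{\mathcal{A}}$ applied to $X, X' \in \mathcal{H}$ gives $f_X(\omega) \le e^{\epsilon d_H(X,X')} f_{X'}(\omega)$ for $\nu$-a.e.\ $\omega$, forcing $f_X \le g_X$; together these give $g_X = f_X$ a.e.\ and $Z_X = 1$, as desired. The second step, the $2\epsilon$-DP guarantee, is a direct consequence of the triangle inequality on $d_H$: for any $X_1, X_2 \in \mathbb{R}^n$ and any $X' \in \mathcal{H}$,
$$e^{\epsilon d_H(X_1, X')} f_{X'}(\omega) \le e^{\epsilon d_H(X_1, X_2)} \cdot e^{\epsilon d_H(X_2, X')} f_{X'}(\omega).$$
Taking the infimum in $X'$ yields $g_{X_1}(\omega) \le e^{\epsilon d_H(X_1, X_2)} g_{X_2}(\omega)$ pointwise, and integrating then gives $Z_{X_1} \le e^{\epsilon d_H(X_1, X_2)} Z_{X_2}$. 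Combining the density bound with the normalization bound and integrating over any $S \in \mathcal{F}$ delivers
$$\mathbb{P}(\mathcal{A}(X_1) \in S) \le e^{2\epsilon d_H(X_1, X_2)}\, \mathbb{P}(\mathcal{A}(X_2) \in S),$$
which is precisely $2\epsilon$-differential privacy.

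The main obstacles I anticipate are the measure-theoretic subtleties hidden in the above program: ensuring that $g_X$ is $\mathcal{F}$-measurable when $\mathcal{H}$ may be uncountable, and ensuring $0 < Z_X < \infty$ so that the normalization step is legitimate. The latter is mild: the same triangle-inequality computation applied at any fixed $X_0 \in \mathcal{H}$ gives $e^{-\epsilon d_H(X, X_0)} \le Z_X \le e^{\epsilon d_H(X, X_0)}$, and since $d_H \le n$ the normalizer is bounded away from $0$ and $\infty$. For the former, I will exploit that $d_H$ takes only integer values in $\{0, \ldots, n\}$ to rewrite $g_X(\omega) = \min_{k = 0, \ldots, n} e^{\epsilon k} \inf\{f_{X'}(\omega) : X' \in \mathcal{H},\, d_H(X,X') = k\}$, reducing the problem to $n+1$ infima over families of measurable functions; these can be replaced by essential infima realized as countable suprema (via a separability argument on $\mathcal{H}$), a substitution that preserves all the inequalities used above since they hold $\nu$-a.e.\ and remain intact under countable operations. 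Beyond these technical points the argument is entirely elementary, reducing to the triangle inequality applied to $d_H$.
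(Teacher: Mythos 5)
Your construction is exactly the one the paper (and \cite{borgs2018private}) uses: the normalized infimum $g_X(\omega)=\inf_{X'\in\mathcal{H}}e^{\epsilon d_H(X,X')}f_{X'}(\omega)$, with the extension property following from taking $X'=X$ plus the DP inequality on $\mathcal{H}$, and the $2\epsilon$ guarantee from the triangle inequality applied once to the density and once to the normalizer — this matches the paper's Proposition \ref{prop_replica} verbatim (up to the paper's specialization to $\epsilon/2$ and Lebesgue densities). Your additional care with the dominating measure, the positivity bounds $e^{-\epsilon d_H(X,X_0)}\le Z_X\le e^{\epsilon d_H(X,X_0)}$, and the essential-infimum handling of measurability is correct and slightly more complete than the paper's own write-up, but it is the same proof.
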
 

Albeit the generality of the result, as mentioned in the conclusion of \cite{borgs2018private}, the Extension Lemma does not provide any guarantee that the extension of the algorithm $\hat{\mathcal{A}}$ can be made in a computationally efficient way. In this work, as described in the main body of the paper, we show that the Extension Lemma is applicable in the context of median estimation, and furthermore the extension can be implemented in polynomial time. Hence, naturally, to perform this extension in a computationally efficient way we don't use the Extension Lemma as a ``blackbox'' but rather need to use the specific structure of the extended private algorithm, by digging into the proof of it in \cite[Proposition 2.1.]{borgs2018private}.

\paragraph{The density of the extended algorithm - general} For simplicity, we present the density of the extended algorithm under certain assumptions that will be correct in the context of our work. As everywhere in this paper, let us first focus on the case the input space is $\mathcal{M}=\mathbb{R}^n$ equipped with the Hamming distance and the output space is the real line equipped with the Lebesgue measure. Furthermore let us assume also that for any $X \in \mathcal{H}$ the randomised restricted algorithm $\hat{\mathcal{A}}(X)$ follows a real-valued continuous distribution with a density $f_{\hat{\mathcal{A}}(X)}$ with respect to the Lebesque measure, given by Equation \eqref{restricted}. We repeat here the definitions for convenience;
\begin{equation} \label{restricted_app} f_{\hat{\mathcal{A}}\left(X\right)} (\omega)=\frac{1}{\hat{Z}} \exp\left(-\frac{\epsilon}{4} \min\left\{ \mystar  \left|m(X)-\omega\right|,\mysquare \right\} \right), \omega \in \mathcal{I}:=\myregion \end{equation}
where the normalizing constant is \begin{equation} \label{norm_restr_app} \hat{Z}=\int_{\mathcal{I}} \exp\left(-\frac{\epsilon}{4} \min\left\{  \mystar \left|m(X)-\omega\right|,\mysquare \right\} \right) \mathrm{d}\omega.\end{equation}As claimed in Section \ref{sec:optimalalg} the normalizing constant $\hat{Z}$ does not have an index $X$ because the right hand side of \eqref{norm_restr_app} takes the same value for any $X \in \mathcal{H}.$ This follows by the following Lemma.

\textcolor{black}{\emph{Note:} For readability reasons, we omit the proofs of the below propositions in the following section}
\begin{lemma}\label{norm_same}
Suppose $C>1/2$. Then for any $X \in \mathcal{H},$
$$\int_{\mathcal{I}} \exp\left(-\frac{\epsilon}{4} \min\left\{  \mystar \left|m(X)-\omega\right|,\mysquare \right\} \right) \mathrm{d}\omega=\int_{\mathcal{I}} \exp\left(-\frac{\epsilon}{4} \min\left\{  \mystar \left|\omega\right|,\mysquare \right\} \right) \mathrm{d}\omega.$$
\end{lemma}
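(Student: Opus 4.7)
The plan is to reduce the claim to a translation-invariance statement for an integral whose integrand becomes constant outside a bounded window around the origin. Let me abbreviate $g(u) := \min\left\{\frac{Ln}{3C}|u|,\, Lrn\right\}$ and $B := R + 4Cr$, so that $\mathcal{I} = [-B, B]$ and the two integrals in the lemma are
\[
I(c) := \int_{-B}^{B} \exp\!\left(-\tfrac{\epsilon}{4} g(\omega - c)\right) \mathrm{d}\omega, \qquad c := m(X),
\]
with the RHS being $I(0)$. The goal is to show $I(c) = I(0)$ for every admissible shift $c$, i.e.\ every $c$ that arises from some $X \in \mathcal{H}$.

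First I would perform the change of variables $u = \omega - c$ to rewrite
\[
I(c) = \int_{-B-c}^{\,B-c} \exp\!\left(-\tfrac{\epsilon}{4} g(u)\right) \mathrm{d}u.
\]
The key observation is that $g$ is \emph{saturated} outside a central window: by the definition of the $\min$, whenever $|u| \geq 3Cr$ we have $g(u) = Lrn$, so the integrand equals the constant $K := \exp(-\tfrac{\epsilon Lrn}{4})$ there. Thus $I(c) - I(0)$ equals the signed sum of the integrals over the two symmetric-difference strips between $[-B,B]$ and $[-B-c, B-c]$, namely (for $c \geq 0$)
\[
I(c) - I(0) = \int_{-B-c}^{-B} \exp\!\left(-\tfrac{\epsilon}{4} g(u)\right) \mathrm{d}u \;-\; \int_{B-c}^{B} \exp\!\left(-\tfrac{\epsilon}{4} g(u)\right) \mathrm{d}u,
\]
and an analogous expression for $c < 0$.

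The next step is to verify that both of these strips lie entirely inside the flat region $\{|u| \geq 3Cr\}$, so that each integrand equals $K$ throughout and both integrals evaluate to $|c| \cdot K$, cancelling. Since $X \in \mathcal{H}$ implies $|c| = |m(X)| \leq R + r/2$ by the last constraint in \eqref{equation:sensitivity-set-median}, I check the two strips: on $[-B-c, -B]$ one has $|u| \geq B = R + 4Cr \geq 3Cr$; on $[B-c, B]$ one has $|u| \geq B - |c| \geq (R + 4Cr) - (R + r/2) = (4C - \tfrac{1}{2})r$, which is $\geq 3Cr$ precisely when $C \geq 1/2$. This is where the hypothesis $C > 1/2$ is used (and the analogous verification for $c < 0$ is symmetric). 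Therefore both integrals equal $|c|K$, their difference vanishes, and $I(c) = I(0)$, which is exactly the claim.

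I do not foresee a genuine obstacle: the proof is essentially a bookkeeping check that the ``flat plateau'' of $g$ is wide enough to absorb every translation by an $m(X)$ permitted by $\mathcal{H}$. The only subtlety is remembering that $B = R + 4Cr$ has been chosen with precisely this buffer of $4Cr$ (rather than $3Cr$) so that even after shifting by the extreme value $|c| = R + r/2$, the endpoints of $\mathcal{I}$ remain in the saturated region.
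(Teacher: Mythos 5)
Your proof is correct and uses essentially the same approach as the paper: a change of variables by $m(X)$, followed by the observation that the integrand is constant (equal to $\exp(-\epsilon Lrn/4)$) outside the plateau $\{|u|\leq 3Cr\}$, together with the bound $|m(X)|\leq R+r/2$ and the hypothesis $C>1/2$ to place the shifted endpoints in the flat region. The only cosmetic difference is that the paper evaluates the shifted integral directly (splitting at $|u|=3Cr$ and observing the result is a formula free of $m(X)$), whereas you compare $I(c)$ with $I(0)$ and show the difference of the two boundary strips cancels; both are the same calculation, just organized differently.
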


Then from the proof of the Extension Lemma [Section 4, \cite{borgs2018private}] we have that the ``extended'' $\epsilon$-differentially private algorithm $\mathcal{A}$ on input $X \in \mathbb{R}^n$ admits also a density given by
\begin{equation}\label{alg:extended}f_{\mathcal{A}(X)}(\omega) =\frac{1}{Z_X} \inf_{X' \in \mathcal{H}} \left[ \exp\left(\frac{\epsilon}{4} d_H(X,X')\right) f_{\hat{\mathcal{A}}(X')}(\omega) \right], \omega \in \mathbb{R}\end{equation} where \begin{equation*}Z_X:=\int_{\mathbb{R}} \inf_{X' \in \mathcal{H}} \left[ \left(\frac{\epsilon}{4} d_H(X,X')\right) f_{\hat{\mathcal{A}}(X)'}(\omega) \right] d \omega.\end{equation*}

For reasons of completeness we state the corollary of the Extension Lemma that establishes that in our setting the algorithm $\mathcal{A}$ satisfies the desired properties of the Extension Lemma. \begin{proposition}\label{prop_replica}
Under the above assumptions, the algorithm $\mathcal{A}$ is $\epsilon$-differentially private and for every $X' \in \mathcal{H}$, $\mathcal{A}(X')\overset{d}{=}\hat{\mathcal{A}}(X')$.
\end{proposition}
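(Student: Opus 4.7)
The plan is to verify the two claims directly from the explicit density \eqref{alg:extended} (equivalently \eqref{alg:general}), using the triangle inequality for the Hamming distance and the $\frac{\epsilon}{2}$-differential privacy of $\hat{\mathcal{A}}$ established in Lemma \ref{lem:restricted-DP}. The two assertions of the proposition are addressed in separate steps, after a brief well-definedness check.

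First I would establish the replication property $\mathcal{A}(X) \overset{d}{=} \hat{\mathcal{A}}(X)$ for $X \in \mathcal{H}$. The key observation is that plugging $X' = X$ into the infimum in \eqref{alg:extended} yields the value $f_{\hat{\mathcal{A}}(X)}(\omega)$ since $d_H(X,X)=0$, while by the $\frac{\epsilon}{2}$-differential privacy of $\hat{\mathcal{A}}$ any competing $X'' \in \mathcal{H}$ satisfies $f_{\hat{\mathcal{A}}(X)}(\omega) \le \exp(\tfrac{\epsilon}{2} d_H(X,X'')) f_{\hat{\mathcal{A}}(X'')}(\omega)$, i.e. the competing term is at least $f_{\hat{\mathcal{A}}(X)}(\omega)$. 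Therefore the infimum is attained at $X' = X$ and equals $f_{\hat{\mathcal{A}}(X)}(\omega)$; integrating forces $Z_X = 1$, so $f_{\mathcal{A}(X)} = f_{\hat{\mathcal{A}}(X)}$.

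Next I would establish the $\epsilon$-differential privacy of $\mathcal{A}$ on $\mathbb{R}^n$. Let $g_X(\omega)$ denote the unnormalized numerator in \eqref{alg:extended}, so $f_{\mathcal{A}(X)}(\omega) = g_X(\omega)/Z_X$. I would show first that $g_X$ is multiplicatively $\frac{\epsilon}{2}$-Lipschitz in $X$ under $d_H$: for any $X_1, X_2 \in \mathbb{R}^n$ and every $X' \in \mathcal{H}$, the triangle inequality $d_H(X_1, X') \le d_H(X_1, X_2) + d_H(X_2, X')$ gives $\exp(\tfrac{\epsilon}{2} d_H(X_1, X')) f_{\hat{\mathcal{A}}(X')}(\omega) \le \exp(\tfrac{\epsilon}{2} d_H(X_1, X_2))\,\exp(\tfrac{\epsilon}{2} d_H(X_2, X')) f_{\hat{\mathcal{A}}(X')}(\omega)$; taking the infimum over $X' \in \mathcal{H}$ yields $g_{X_1}(\omega) \le \exp(\tfrac{\epsilon}{2} d_H(X_1,X_2)) g_{X_2}(\omega)$. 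Integrating over $\omega$ transfers the same bound to the normalizing constants, giving $Z_{X_1} \le \exp(\tfrac{\epsilon}{2} d_H(X_1,X_2)) Z_{X_2}$, and symmetry yields the reverse inequality. Combining the pointwise bound on numerators with the reciprocal bound on denominators produces the factor $\exp(\epsilon\, d_H(X_1, X_2))$, and integrating over any measurable set $S$ gives the DP condition \eqref{privdfn}. The substantive step is tracking the $\epsilon/2$ contributions on numerator and denominator separately and combining them to yield the correct final $\epsilon$.

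Before all that I would handle well-definedness of $\mathcal{A}$, namely $0 < Z_X < \infty$ for every $X \in \mathbb{R}^n$. Fixing any reference $X_0 \in \mathcal{H}$ (nonempty by Lemma \ref{lemma:typical-set-median}), the integrand is bounded above pointwise by $\exp(\tfrac{\epsilon}{2} d_H(X, X_0)) f_{\hat{\mathcal{A}}(X_0)}(\omega)$, whose integral is the finite number $\exp(\tfrac{\epsilon}{2} d_H(X, X_0))$, and it is strictly positive on the common support $\mathcal{I}$ guaranteed by Lemma \ref{norm_same}. I do not foresee a significant obstacle; the argument is essentially a triangle-inequality bookkeeping exercise, and the only delicate point is that the factor $\frac{\epsilon}{2}$ on the numerator and on the normalizing constants must compound correctly to deliver the $\epsilon$ in the DP guarantee.
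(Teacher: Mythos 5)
Your proposal is correct and follows essentially the same route as the paper's proof: the triangle inequality for $d_H$ applied inside the infimum to bound the unnormalized densities and hence the normalizing constants (compounding the two $\epsilon/2$ factors into $\epsilon$), and the observation that for $X\in\mathcal{H}$ the $\epsilon/2$-differential privacy of $\hat{\mathcal{A}}$ forces the infimum to be attained at $X'=X$, giving $Z_X=1$ and the distributional identity. The only difference is cosmetic (you prove the replication property first and add an explicit well-definedness check, which the paper delegates to Lemma~\ref{well_posed}).
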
 

As a technical remark note that for the density in equation \eqref{alg:extended} to be well-defined we require that for every $X \in \mathbb{R}^n$ the ``unnormalized'' density function \begin{equation} \label{eq:Gfunction} G_X(\omega):=  \inf_{X' \in \mathcal{H}} \left[ \exp\left(\frac{\epsilon}{2} d_H(X,X')\right) f_{\hat{\mathcal{A}}(X')}(\omega) \right],  \omega  \in \mathbb{R}  \end{equation} is integrable and has a finite integral as well. 
Both conditions follows by the following Lemma, which establishes - among other properties - that $G_X$ is a continuous function almost everywhere and \textcolor{black}{therefore it} has a finite integral.
\begin{lemma}\label{well_posed}
Suppose the above assumptions hold and fix $X \in \mathbb{R}^n$. Then, \begin{itemize} 
\item $G_X(\omega)=0$ for all $\omega \not \in \mathcal{I}=\myregion$ 
\item $G_X$ is $\mathcal{R}$-Lipschitz on $\mathcal{I}$ with Lipschitz constant $\mathcal{R}=e^{\frac{\epsilon n}{2}}\frac{\epsilon L n}{12C \hat{Z}},$ where $\hat{Z}$ is given in \eqref{norm_restr_app}.
\end{itemize}Furthermore, it holds $0 \leq \int_{\omega \in \mathbb{R}} G_X(\omega) d\omega \leq 1.$
\end{lemma}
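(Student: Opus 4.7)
The plan is to dispatch the three claims in turn, with the bulk of the work living in the Lipschitz estimate.

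\textbf{Support.} By definition \eqref{restricted_app}, each restricted density $f_{\hat{\mathcal{A}}(X')}$ is supported on $\mathcal{I}$. For $\omega \notin \mathcal{I}$ every term $e^{\epsilon d_H(X,X')/2}\, f_{\hat{\mathcal{A}}(X')}(\omega)$ inside the infimum defining $G_X$ vanishes, so $G_X(\omega)=0$.

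\textbf{Lipschitz.} I would fix $X' \in \mathcal{H}$ and bound the slope of $g_{X'}(\omega) := e^{\epsilon d_H(X,X')/2}\, f_{\hat{\mathcal{A}}(X')}(\omega)$ uniformly in $X'$. The only $\omega$-dependence is through the piecewise-linear function $h(\omega) := \min\{(Ln/3C)|m(X')-\omega|,\; Lrn\}$, whose a.e.\ slope lies in $\{0,\pm Ln/(3C)\}$. By the chain rule, $|f'_{\hat{\mathcal{A}}(X')}(\omega)| \leq (\epsilon/4)(Ln/3C)\, f_{\hat{\mathcal{A}}(X')}(\omega) \leq \epsilon L n/(12C\hat{Z})$, using the peak bound $f_{\hat{\mathcal{A}}(X')} \leq 1/\hat{Z}$. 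Multiplying by $e^{\epsilon d_H(X,X')/2} \leq e^{\epsilon n/2}$ (since $d_H \leq n$) gives $|g'_{X'}(\omega)| \leq \mathcal{R}$ uniformly in $X' \in \mathcal{H}$. Since a pointwise infimum of functions sharing a common Lipschitz constant inherits that constant via $|\inf_\alpha g_\alpha(\omega_1) - \inf_\alpha g_\alpha(\omega_2)| \leq \sup_\alpha |g_\alpha(\omega_1) - g_\alpha(\omega_2)|$, the function $G_X$ is $\mathcal{R}$-Lipschitz on $\mathcal{I}$.

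\textbf{Integrability.} Nonnegativity is immediate from the definition. For the upper bound, I would pick any $X' \in \mathcal{H}$ (the typical set is nonempty by construction): then $G_X(\omega) \leq g_{X'}(\omega)$ pointwise, hence $\int_{\mathbb{R}} G_X \leq \int_{\mathcal{I}} g_{X'} = e^{\epsilon d_H(X,X')/2}$. Choosing the $X' \in \mathcal{H}$ closest to $X$ yields $\int G_X \leq e^{\epsilon d_H(X,\mathcal{H})/2}$; in particular $\int G_X \leq 1$ whenever $X \in \mathcal{H}$ (via $X'=X$), matching the stated bound, and always $\int G_X \leq e^{\epsilon n/2}$, which is the finiteness needed for the normalizer $Z_X$ of \eqref{alg:extended} to be well defined.

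The main obstacle is the uniformity of the Lipschitz constant across $X' \in \mathcal{H}$: the slope of $g_{X'}$ must not blow up with the location $m(X')$ of the density's peak. This is exactly what the flat-tail clipping in the ``flattened'' Laplacian buys us --- it caps $|h'|$ at $Ln/(3C)$ for every $X'$ regardless of where $m(X')$ sits --- while the crude bound $d_H \leq n$ controls the amplification factor $e^{\epsilon d_H/2}$. A secondary subtlety is that the clean ``$\leq 1$'' bound in the third bullet is only tight when $X \in \mathcal{H}$; for general $X \in \mathbb{R}^n$ one should read it as the finiteness assertion that makes $G_X/Z_X$ a bona fide density.
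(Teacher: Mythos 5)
Your proof follows the same three‑part architecture as the paper's: the support claim is immediate from the definition \eqref{restricted_app}; the Lipschitz bound comes from showing each $g_{X'}(\omega)=e^{\epsilon d_H(X,X')/2}f_{\hat{\mathcal{A}}(X')}(\omega)$ is $\mathcal{R}$‑Lipschitz uniformly in $X'\in\mathcal{H}$ and then invoking the fact that a pointwise infimum of $\mathcal{R}$‑Lipschitz functions is $\mathcal{R}$‑Lipschitz; and the integral bound follows by dominating $G_X$ by a single $g_{X'}$. For the slope bound your chain‑rule derivation is interchangeable with the paper's algebraic route via $|e^{-a}-e^{-b}|\le|a-b|$ combined with Property~\ref{property:triangular-inequality-in-min}; the paper's version sidesteps the mild non‑differentiability of $f_{\hat{\mathcal{A}}(X')}$ at its kinks, but your a.e.\ derivative argument still yields the global Lipschitz constant.

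Where you are actually more careful than the paper is the third bullet. The paper's one‑line justification — that $f_{\hat{\mathcal{A}}(X')}$ ``upper bounds pointwise the function $G_X$'' for an arbitrary fixed $X'\in\mathcal{H}$ — is not correct for $X\notin\mathcal{H}$: what one actually has is $G_X(\omega)\le e^{\epsilon d_H(X,X')/2}f_{\hat{\mathcal{A}}(X')}(\omega)$, and the exponential prefactor does not disappear unless $X'=X$. You correctly note that the clean $\int G_X\le 1$ only follows when $X\in\mathcal{H}$ (taking $X'=X$), and that in general one gets the weaker but still sufficient bound $\int G_X\le e^{\epsilon d_H(X,\mathcal{H})/2}\le e^{\epsilon n/2}$. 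Nothing downstream in the paper relies on $\int G_X\le 1$ beyond finiteness of $Z_X$, so this does not propagate, but your reading of that clause is the accurate one.
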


Plugging in now the densities of the restricted algorithm (Equations \eqref{restricted_app}) to the general extended algorithm (Equations \eqref{alg:general_app}) and finally using also that according to Lemma \ref{norm_same} the normalizing constant for the restricted algoriths is independent of $X' \in \mathcal{H}$, we have \begin{equation}\label{alg:general_app} f_{\mathcal{A}\left(X\right)}(\omega)=\frac{1}{Z_{X}}\exp\left(\displaystyle\inf_{X' \in \mathcal{H}} \left[ \frac{\epsilon}{2} d_H(X,X') -\frac{\epsilon}{4} \min\left\{ \mystar  \left|m(X')-\omega\right|,\mysquare \right\}\right] \right), \end{equation}where $\omega \in \myregion$ and $Z_X$ is the appropriate normalizing constant.


\subsection{Omitted Proofs}

\subsubsection{Proofs of \Cref{norm_same}}

\begin{proof}[Proof of Lemma \ref{norm_same}] Since $X=0 \in \mathcal{H}$ and the left hand side for $X=0$ evaluates to the right hand side, it suffices to show that the left hand side does not depend on the value of $m(X).$

By denoting $\mathcal{I}-m(X):=[-R-4Cr-m(X),R+4Cr-m(X)]$ we have
\begin{align*} &\int_{\mathcal{I}} \exp\left(-\frac{\epsilon}{4} \min\left\{  \mystar \left|m(X)-\omega\right|,\mysquare \right\} \right) \mathrm{d}\omega=\int_{\mathcal{I}-m(X)} \exp\left(-\frac{\epsilon}{4} \min\left\{  \mystar \left|\omega\right|,\mysquare \right\} \right) \mathrm{d}\omega\\
=&\int_{(\mathcal{I}-m(X))\cap\{|\omega| \leq 3Cr\}} \exp\left(-\frac{\epsilon}{4}   \mystar \left|\omega\right| \right) \mathrm{d}\omega+\int_{(\mathcal{I}-m(X))\cap\{|\omega| > 3Cr\}} \exp\left(-\frac{\epsilon}{4} \mysquare \right) \mathrm{d}\omega.\\
\end{align*}Now since $X \in \mathcal{H}$ we have $m(X) \in [-R-r/2,R+r/2].$ Hence, using $C>1/2$ we have $[-3Cr,3Cr] \subseteq \mathcal{I}-m(X) .$ Therefore the last summation of integrals simplifies to
\begin{align*} &\int_{|\omega| \leq 3Cr} \exp\left(-\frac{\epsilon}{4}   \mystar \left|\omega\right| \right) \mathrm{d}\omega+(|\mathcal{I}|-6Cr) \exp\left(-\frac{\epsilon}{4} \mysquare \right) \mathrm{d}\omega,
\end{align*}which does not depends on $X$ as we wanted.
\end{proof}

\subsubsection{Proof of  \Cref{prop_replica}}

For the proof of Proposition \ref{prop_replica}, we start with using the \cite[Lemma 4.1.]{borgs2018private} applied in our setting, which gives the following Lemma.
\begin{lemma}\label{lemma:Radon-Nikodyn-equivalence}
Let $\mathcal{A}'$ be a real-valued randomized algorithm designed for input from $\mathcal{H}' \subseteq \mathbb{R}^n$. Suppose that for any $X \in \mathcal{H}'$, $\mathcal{A}'(X)$ admits a density function with respect to the Lebesque measure $f_{\mathcal{A}'(X)}$. Then the following are equivalent \begin{itemize} \item[(1)] $\mathcal{A}'$ is $\epsilon$-differentially private on $\mathcal{H}$; \item[(2)] For any $X,X' \in \mathcal{H}$ \begin{equation}\label{prime}  f_{\mathcal{A}'(X)}(\omega) \leq e^{\epsilon d_H(X,X') } f_{\mathcal{A}'(X')}(\omega), \end{equation} almost surely with respect to the Lebesque measure. 
\end{itemize}
\end{lemma}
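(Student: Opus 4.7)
The plan is to prove the two implications separately, with the easy direction first. For (2) $\Rightarrow$ (1), I would fix any $X, X' \in \mathcal{H}'$ and any measurable $S \subseteq \mathbb{R}$, then simply integrate the pointwise inequality \eqref{prime} over $S$ with respect to Lebesgue measure. Since $f_{\mathcal{A}'(X)}$ and $f_{\mathcal{A}'(X')}$ are densities of $\mathcal{A}'(X)$ and $\mathcal{A}'(X')$, and the factor $e^{\epsilon d_H(X,X')}$ pulls out as a constant, the integrated inequality reads $\mathbb{P}(\mathcal{A}'(X) \in S) \leq e^{\epsilon d_H(X,X')} \mathbb{P}(\mathcal{A}'(X') \in S)$, which is exactly Definition~\ref{definition:epsilon-privacy}.

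For the substantive direction (1) $\Rightarrow$ (2), I would argue by contradiction. Fix $X, X' \in \mathcal{H}'$, write $K := e^{\epsilon d_H(X,X')}$, and suppose the ``bad set''
\[
B := \{\omega \in \mathbb{R} : f_{\mathcal{A}'(X)}(\omega) > K\, f_{\mathcal{A}'(X')}(\omega)\}
\]
has strictly positive Lebesgue measure. The idea is to upgrade this pointwise strict inequality to a uniform multiplicative gap so that integration forces a strict violation of privacy. For each $n \in \mathbb{N}$, set
\[
B_n := \bigl\{\omega \in B : f_{\mathcal{A}'(X)}(\omega) \geq (K + 1/n)\, f_{\mathcal{A}'(X')}(\omega)\bigr\}.
\]
The inclusion $B = \bigcup_n B_n$ follows from the strict definition of $B$, so by countable subadditivity there exists $n_0$ with $B_{n_0}$ of positive Lebesgue measure. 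Both $B$ and the $B_n$ are Borel measurable since the densities are measurable, so the following integrals are well defined.

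Integrating the defining inequality of $B_{n_0}$ yields
\[
\mathbb{P}(\mathcal{A}'(X) \in B_{n_0}) \;\geq\; \left(K + \tfrac{1}{n_0}\right) \mathbb{P}(\mathcal{A}'(X') \in B_{n_0}).
\]
If $\mathbb{P}(\mathcal{A}'(X') \in B_{n_0}) > 0$ this already contradicts (1) applied to $S = B_{n_0}$. The step I expect to be the main (though still mild) obstacle is the degenerate sub-case $\mathbb{P}(\mathcal{A}'(X') \in B_{n_0}) = 0$: here $f_{\mathcal{A}'(X')} = 0$ Lebesgue-a.e.\ on $B_{n_0}$, but every $\omega \in B_{n_0} \subseteq B$ with $f_{\mathcal{A}'(X')}(\omega) = 0$ must satisfy $f_{\mathcal{A}'(X)}(\omega) > K \cdot 0 = 0$ by the strict definition of $B$. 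Hence $f_{\mathcal{A}'(X)} > 0$ a.e.\ on $B_{n_0}$, so $\mathbb{P}(\mathcal{A}'(X) \in B_{n_0}) > 0$ while $\mathbb{P}(\mathcal{A}'(X') \in B_{n_0}) = 0$, again contradicting (1). Either way $B$ must have Lebesgue measure zero, which is exactly the almost-sure statement (2). A brief symmetry remark would close the argument by noting that exchanging the roles of $X$ and $X'$ is unnecessary, because the conclusion is asserted for \emph{every} ordered pair $(X, X')$.
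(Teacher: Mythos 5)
Your proof is correct, but there is nothing in the paper to compare it against: the lemma is quoted verbatim as Lemma~4.1 of \cite{borgs2018private} and used as a black box, without a proof in this paper. Your (2) $\Rightarrow$ (1) direction (integrate the density inequality over $S$) is the standard one. In the (1) $\Rightarrow$ (2) direction your contradiction argument works, but the $B_n$ layering and the subsequent two-subcase split are unnecessary: once $B := \{\omega : f_{\mathcal{A}'(X)}(\omega) > K f_{\mathcal{A}'(X')}(\omega)\}$ is assumed to have positive Lebesgue measure, the integrand $f_{\mathcal{A}'(X)} - K f_{\mathcal{A}'(X')}$ is strictly positive at every point of $B$, so $\int_B \bigl(f_{\mathcal{A}'(X)} - K f_{\mathcal{A}'(X')}\bigr)\,d\omega > 0$ by the elementary fact that a measurable function strictly positive on a set of positive measure has positive integral over that set (both integrals are finite since the densities integrate to one). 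This yields $\mathbb{P}(\mathcal{A}'(X) \in B) > K\,\mathbb{P}(\mathcal{A}'(X') \in B)$ directly, contradicting (1) with $S = B$ in a single step. Your version is airtight, just longer than it needs to be.
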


We now proceed with the proof of Proposition \ref{prop_replica}.

\begin{proof}[Proof of Proposition \ref{prop_replica}]

We first prove that $\mathcal{A}$ is $\epsilon$-differentially private over all pairs of input from $\mathbb{R}^n$. Using \Cref{lemma:Radon-Nikodyn-equivalence} it suffices to prove that for any $X_1,X_2 \in \mathcal{H}$,
 \begin{align*}
f_{\mathcal{A}(X_1)}(\omega) \leq \exp \left( \epsilon d_H(X_1,X_2)\right) f_{\mathcal{A}(X_2)}(\omega),
\end{align*} almost surely with respect to the Lebesque measure. We establish it in particular for every $\omega \in \mathbb{R}$. Notice that if $\omega \not \in \mathcal{I},$ both sides are zero from Lemma \ref{well_posed}. Hence let us assume $\omega \in \mathcal{I}$. Let $X_1,X_2 \in \mathbb{R}^n$. Using triangle inequality we obtain for every $\omega \in \mathcal{I}$, \begin{align*}\inf_{X' \in \mathcal{H}} \left[ \exp \left( \frac{\epsilon}{2} d_H(X_1,X')\right) f_{\hat{\mathcal{A}}(X')}(\omega) \right] & \leq \inf_{X' \in \mathcal{H}} \left[ \exp\left(\frac{\epsilon}{2} \left[d_H(X_1,X_2)+d_H(X_2,X')\right] \right) f_{\hat{\mathcal{A}}(X')}(\omega) \right]\\
&=\exp \left( \frac{\epsilon}{2} d_H(X_1,X_2)\right) \inf_{X' \in \mathcal{H}} \left[ \exp \left(\frac{\epsilon}{2} d_H(X,X')\right) f_{\hat{\mathcal{A}}(X')}(\omega) \right],
\end{align*}which implies that for any $X_1,X_2 \in \mathcal{M}$, 
\begin{align*}
Z_{X_1}&=\int_{\Omega} \inf_{X' \in \mathcal{H}} \left[ \exp\left(\frac{\epsilon}{2} d(X_1,X')\right) f_{\hat{\mathcal{A}}(X')}(\omega) \right] d \omega \\
&\leq \exp\left(\frac{\epsilon}{2} d(X_1,X_2) \right)\int_{\Omega} \inf_{X' \in \mathcal{H}} \left[ \exp \left( \frac{\epsilon}{2} d(X_2,X') \right) f_{\hat{\mathcal{A}}(X')}(\omega) \right] d  \omega\\
&=\exp \left( \frac{\epsilon}{2}d(X_1,X_2) \right) Z_{X_2}. 
\end{align*}Therefore using the above two inequalities we obtain that for any $X_1,X_2 \in \mathbb{R}^n$ and $\omega \in \mathcal{I}$,
 \begin{align*}
f_{\mathcal{A}(X_1)}(\omega) &=\frac{1}{Z_{X_1}} \inf_{X' \in \mathcal{H}} \left[ \exp \left( \frac{\epsilon}{2} d_H(X_1,X') \right) f_{\hat{\mathcal{A}}(X')}(\omega) \right] \\
&\leq \frac{1}{\exp\left(-\frac{\epsilon}{2} d_H(X_2,X_1)\right)Z_{X_2}}\exp\left(\frac{\epsilon}{2}d_H(X_1,X_2)\right) \inf_{X' \in \mathcal{H}}  \left[ \exp \left( \frac{\epsilon}{2} d(X_2,X') \right) f_{\hat{\mathcal{A}}(X')}(\omega) \right] \\
&=\exp \left( \frac{\epsilon}{2} d(X_1,X_2)\right) \frac{1}{Z_{X_2}} \inf_{X' \in \mathcal{H}}  \left[ \exp \left( \frac{\epsilon}{2}d_H(X_2,X') \right) f_{\hat{\mathcal{A}}(X')}(\omega) \right]\\
&=\exp \left( \epsilon d_H(X_1,X_2)\right) f_{\mathcal{A}(X_2)}(\omega),
\end{align*}as we wanted. 

Now we prove that for every $X \in \mathcal{H}$, $\mathcal{A}(X) \overset{d}{=}  \hat{\mathcal{A}}(X)$. Consider an arbitrary $X \in \mathcal{H}$. We know that $\hat{\mathcal{A}}$ is $\epsilon/2$-differentially private which based on Lemma \ref{lemma:Radon-Nikodyn-equivalence} implies that for any $X,X' \in \mathcal{H}$ \begin{equation}  f_{\hat{\mathcal{A}}(X)}(\omega) \leq \exp \left(\frac{\epsilon}{2} d_H(X,X') \right) f_{\hat{\mathcal{A}}(X')}(\omega),   \end{equation} almost surely with respect to the Lebesque measure. Observing that the above inequality holds almost surely as equality if $X'=X$ we obtain that for any $X \in \mathcal{H}$ it holds $$f_{\hat{\mathcal{A}}(X)}(\omega)=\inf_{X' \in \mathcal{H}}  \left[ \exp \left( \frac{\epsilon}{2} d_H(X,X') \right) f_{\hat{\mathcal{A}}(X')}(\omega) \right],$$ almost surely with respect to the Lebesque measure. Using that $f_{\hat{\mathcal{A}}(X)}$ is a probability density function  we conclude that in this case $$Z_{X}=\int  f_{\hat{\mathcal{A}}(X)}(\omega) d\omega =1.$$ Therefore $$f_{\hat{\mathcal{A}}(X)}(\omega)=\frac{1}{Z_X}\inf_{X' \in \mathcal{H}}  \left[ \exp \left( \epsilon d_H(X,X') \right) f_{\hat{\mathcal{A}}(X')}(\omega) \right],$$ almost surely with respect to the Lebesque measure and hence $$ f_{\hat{\mathcal{A}}(X)}(\omega)=f_{\mathcal{A}(X)}(\omega),$$ almost surely with respect to the Lebesque measure. This suffices to conclude that $\hat{\mathcal{A}}(X) \overset{d}{=}  \mathcal{A}(X)$ as needed. 

The proof of  Proposition \ref{prop_replica} is complete.
\end{proof}

\subsubsection{Proof of \Cref{well_posed}}

\begin{proof}[Proof of Lemma \ref{well_posed}] First notice that if $\omega \not \in \mathcal{I},$ from \eqref{restricted} for any $X' \in \mathcal{H},$ $f_{\hat{\mathcal{A}}(X')}(\omega)=0$. Therefore indeed $$0 \leq f_{\mathcal{A}(X)}(\omega) \leq \exp\left(\frac{\epsilon}{2} d_H(X,X')\right) f_{\hat{\mathcal{A}}(X')}(\omega)=0.$$
We prove now that for all $X' \in \mathcal{H},$  the function $\exp\left(\frac{\epsilon}{2} d_H(X,X')\right) f_{\hat{\mathcal{A}}(X')}(\omega)$ is $\mathcal{R}$-Lipschitz on $\mathcal{I}$. The claim then follows by the elementary real analysis fact that the pointwise infimum over an arbitrary family of $\mathcal{R}$-Lipschitz functions is an $\mathcal{R}$-Lipschitz function.

Now recall that for all $a,b>0$ by elementary calculus, $|e^{-a}-e^{-b}| \leq |a-b|.$ Hence, for fixed $X'\in \mathcal{H},$ using the definition of the density in equation \eqref{restricted_app}, we have for any $\omega, \omega' \in \mathcal{I},$ \begin{align*}
|f_{\hat{\mathcal{A}}(X')}(\omega)-f_{\hat{\mathcal{A}}(X')}(\omega')| \leq  \frac{\epsilon}{4\hat{Z}} |\min\left\{ \mystar  \left|m(X)-\omega\right|,\mysquare \right\}-\min\left\{ \mystar  \left|m(X)-\omega'\right|,\mysquare \right\}|
\end{align*}Now combining with Property \ref{property:triangular-inequality-in-min} we conclude 
\begin{align*}
|f_{\hat{\mathcal{A}}(X')}(\omega)-f_{\hat{\mathcal{A}}(X')}(\omega')| \leq  \frac{\epsilon}{4\hat{Z}} |\min\left\{ \mystar  \left|\omega- \omega'\right|,\mysquare \right\}| \leq \frac{\epsilon L n}{12C \hat{Z}}|\omega-\omega'|.
\end{align*} In particular, $\exp\left(\frac{\epsilon}{2} d_H(X,X')\right) f_{\hat{\mathcal{A}}(X')}(\omega)$ is $\mathcal{R}= e^{\frac{\epsilon n}{2} }\frac{\epsilon L n}{12C \hat{Z}}$-Lipschitz since $\exp\left(\frac{\epsilon}{2} d_H(X,X')\right)$ is a constant independent of $\omega$ with $\exp\left(\frac{\epsilon}{2} d_H(X,X')\right) \leq \exp(\frac{\epsilon n}{2} ). $ The proof of the Lipschitz continuity is complete. The final part follows from the fact that $G$ is non-negative by definition and again by definition for arbitrary fixed $X' \in \mathcal{H},$ $f_{\hat{\mathcal{A}}(X')}$
integrates to one and upper bounds pointwise the function $G_X.$

\end{proof}
\label{section:Extension-Lemma}
\section{Proofs for Section \ref{sec:optimalalg}: The Rate-Optimal Algorithm}
\label{section:Median}

\subsection{Proof of Lemma \ref{lemma:sensitivity-median}}

\begin{proof}
We assume $\kappa:=d_H(X,Y) \geq 1$ as if it equals \textcolor{black}{to} zero the Lemma follows. 

Consider two data-sets $\begin{cases} 
X&:=\{x_1,x_2,\ldots,x_{\kappa},C_1,\ldots,C_{n-\kappa}\}\\ 
Y&:=\{y_1,y_2,\ldots,y_{\kappa},C_1,\ldots,C_{n-\kappa}\}  \end{cases}
\in \mathcal{H}$.  Without loss of generality we assume that the common part is sorted in an increasing order, i.e. it holds $C_1\leq\ldots\leq C_{n-\kappa}.$

Notice that the left median of data-set can be altered by the addition of $M'$ new data-points only to some data-point among the $\lfloor M'/2 \rfloor +1$ data-points on the right and the $\lfloor M'/2 \rfloor +1$ data-points on the left of it. Hence the interval $[C_{\lfloor \frac{n-\kappa}{2} \rfloor},m(X)] \cup [m(X),C_{\lfloor \frac{n-\kappa}{2} \rfloor}]$ (respectively the interval $[C_{\lfloor \frac{n-\kappa}{2} \rfloor},m(Y)] \cup [m(Y),C_{\lfloor \frac{n-\kappa}{2} \rfloor}]$) there can be at most $\lfloor \frac{\kappa }{2}\rfloor+1$ data-points of the data-set $X$ (respectively of the data-set $Y$). 

Therefore, leveraging the definition of sensitivity set, we have that $$|m(X)-C_{\lfloor \frac{n-\kappa}{2} \rfloor}|\le (\lfloor \frac{\kappa }{2}\rfloor+1)\frac{ C }{Ln} $$ and  $$|m(Y)-C_{\lfloor \frac{n-\kappa}{2} \rfloor}|\le (\lfloor \frac{\kappa }{2}\rfloor+1)\frac{ C }{Ln} ,$$ which implies by the triangle inequality
$$ \left|m(X)-m(Y)\right| \leq (2\lfloor \frac{\kappa }{2}\rfloor+2)\frac{ C }{Ln} \leq 3\kappa\frac{ C}{Ln}.$$
\end{proof}

\subsection{Proof of Lemma \ref{lem:restricted-DP}}

\begin{proof}

\textcolor{black}{We start with the following elementary property is important in what follows for the proof of Lemma \ref{lem:restricted-DP}.
\begin{property}[Lemma 9.2. in \cite{BorgsCSZ18}] \label{property:triangular-inequality-in-min}For any $a, b > 0$ the function $f : \mathbb{R}\to \mathbb{R}$, with $f(x) = \min\{a|x|, b\}$, for all $x \in \mathbb{R}$,
satisfies the triangle inequality, $f(x + y) \le f(x) + f(y)$ for all $x, y \in \mathbb{R}$.
\end{property}
Additionally, using Lemma \ref{lemma:sensitivity-median} we have for any $X,Y \in \mathcal{H}$, 
\begin{equation}\label{equation:semiLip-median}
\min\left\{  \mystar \left|m(X)-m(Y)\right|,\mysquare  \right\} \leq d_H(X,Y).
\end{equation}
}

We firstly analyze the non-normalized ratio corresponding to two data-sets
$X,Y$:

\textcolor{black}{
\begin{align*}
\frac{Z_Xf_{\hat{\mathcal{A}}\left(X\right)} (q)}{Z_Yf_{\hat{\mathcal{A}}\left(Y\right)} (q)}&=
\frac{\exp\left(-\frac{\epsilon}{4} \min\left\{  \mystar  \left|m(X)-q\right|,\mysquare  \right\} \right)}{\exp\left(-\frac{\epsilon}{4} \min\left\{  \mystar  \left|m(Y)-q\right|,\mysquare  \right\}  \right)}\\
&\le
\exp\left(\frac{\epsilon}{4} \min\left\{  \mystar  \left|m(X)-m(Y)\right|,\mysquare  \right\} \right)
\ &\text{\Cref{property:triangular-inequality-in-min}}\\
&\le \exp\left(\frac{\epsilon}{4}d_H(X,Y)\right)
\ & \text{\Cref{equation:semiLip-median}}
\end{align*}
Furthermore for the ratio of the two normalizing constants we have
}
\begin{align*}
\frac{Z_Y}{Z_X}&=\frac{
\displaystyle\int_{\mathcal{I}} \exp\left(-\frac{\epsilon}{4} \min\left\{  \mystar  \left|m(Y)-q\right|,\mysquare  \right\} \right) \mathrm{d}q
}{
\displaystyle\int_{\mathcal{I}} \exp\left(-\frac{\epsilon}{4} \min\left\{  \mystar  \left|m(X)-q\right|,\mysquare  \right\} \right) \mathrm{d}q
}\\
&\le \frac{
\displaystyle\int_{\mathcal{I}} \exp\left(-\frac{\epsilon}{4} \min\left\{  \mystar  \left|m(Y)-q\right|,\mysquare  \right\} \right) \mathrm{d}q
}{
\displaystyle\int_{\mathcal{I}} \exp\left(-\frac{\epsilon}{4}\begin{cases} \min\left\{  \mystar  \left|m(X)-m(Y)\right|,\mysquare  \right\}\\+\\\min\left\{  \mystar  \left|m(Y)-q\right|,\mysquare  \right\}\end{cases} \right) \mathrm{d}q
}&\text{\Cref{property:triangular-inequality-in-min}}\\
&=
\exp\left(\frac{\epsilon}{4} \min\left\{  \mystar  \left|m(X)-m(Y)\right|,\mysquare  \right\} \right)\\
&\le 
\exp\left(\frac{\epsilon}{4}d_H(X,Y)\right) \ & \text{\Cref{equation:semiLip-median}}
\end{align*}
Combining the two final inequalities above we conclude
\[
f_{\hat{\mathcal{A}}\left(X\right)} (q)\le \exp\left(\frac{\epsilon}{2} d_H(X,Y)\right)f_{\hat{\mathcal{A}}\left(Y\right)} (q), \ \forall q\in \mathcal{I}
\] The proof is complete.

\end{proof}

\subsection{Proof of Lemma \ref{lemma:accuracy-typical0}}

Lemma \ref{lemma:accuracy-typical0} follows from the following more general lemma where $C>1$ is allowed to grow with $n$ while in the context of Lemma \ref{lemma:accuracy-typical0} is considered in the asymptotic analysis as a constant.
\begin{lemma}[]\label{lemma:accuracy-typical}
Suppose $C>1$, possibly scaling with $n$, and $\alpha \in (0,r), \beta \in (0,1)$. For some $$n=O\left(C\frac{\log \left(\frac{1}{\beta}\right)}{\epsilon L \alpha}+\frac{\log \left( \frac{R}{\alpha}+1\right)}{\epsilon L r}\right)$$it holds
\begin{align*}
    \max_{X \in \mathcal{H}} \mathbb{P} [ |\hat{\mathcal{A}}(X)-m(X)|  \geq \alpha] \leq \beta,
\end{align*}where the probability is with respect to the randomness of the algorithm $\hat{\mathcal{A}}(X).$
\end{lemma}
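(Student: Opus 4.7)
The plan is to bound $\mathbb{P}[|\hat{\mathcal{A}}(X) - m(X)| \geq \alpha]$ by direct integration of the density \eqref{restricted}. Since $X \in \mathcal{H}$ is fixed, the randomness is only in $\hat{\mathcal{A}}$; by Lemma \ref{norm_same} and the translation-invariance of the density around $m(X)$, this probability is independent of the choice of $X \in \mathcal{H}$, so it suffices to treat one arbitrary $X$. I would split the support $\mathcal{I} = [-R-4Cr, R+4Cr]$ into the Laplacian core $I_L := [m(X)-3Cr, m(X)+3Cr]$, on which the density is proportional to $\exp(-\epsilon L n|m(X)-\omega|/(12C))$, and the flat tail $\mathcal{I} \setminus I_L$, on which it is proportional to $e^{-\epsilon L r n/4}$. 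Direct integration gives $\hat{Z} = Z_L + Z_F$ with $Z_L = \tfrac{24C}{\epsilon L n}(1 - e^{-\epsilon L r n/4})$ and $Z_F \leq (2R + 2Cr)\, e^{-\epsilon L r n/4}$ (the latter using $m(X)\in[-R-r/2,R+r/2]$ and $C>1/2$). Similarly the mass outside $[m(X)-\alpha, m(X)+\alpha]$ equals $N_L + Z_F$ with $N_L \leq \tfrac{24C}{\epsilon L n}\, e^{-\epsilon L n \alpha/(12C)}$.

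Next I would verify that the stated sample complexity lower-bounds $n$ enough to ensure $\epsilon L r n \geq 4\log 2$ (this follows from either term, using $r \geq \alpha$ for the first one), so $\hat{Z} \geq Z_L \geq \tfrac{12C}{\epsilon L n}$. Dividing, the Laplacian contribution satisfies $N_L/\hat{Z} \leq 2\, e^{-\epsilon L n \alpha/(12C)} \leq \beta/2$ once $n \geq 12 C \log(4/\beta)/(\epsilon L \alpha)$, recovering the first term of the claim. The flat-tail contribution satisfies $Z_F/\hat{Z} \leq \tfrac{(2R+2Cr)\epsilon L n}{12C}\, e^{-\epsilon L r n/4} \leq \beta/2$ once $n = \Omega(\log(R/\alpha+1)/(\epsilon L r))$ with a sufficiently large hidden constant, recovering the second term. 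Summing the two contributions gives the required $\beta$.

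The main obstacle will be extracting the sharp logarithm $\log(R/\alpha + 1)$ in the flat-tail bound, since the inequality $Z_F/\hat{Z} \leq \beta/2$ is implicit in $n$ (both sides depend on $n$) and the prefactor is $2R + 2Cr$ rather than something involving $\alpha$. I would handle this via a brief case split: if $R \leq \alpha$, then $2R + 2Cr = O(Cr)$ and any $n = \Omega(1/(\epsilon L r))$ makes $Z_F/\hat{Z}$ tiny; otherwise, taking $n = K\log(R/\alpha)/(\epsilon L r)$ with $K$ a sufficiently large constant makes $e^{-\epsilon L r n/4} \leq (\alpha/R)^{K/4}$, which dominates both the $2R$ prefactor and the subleading $\log n$ factor, formally using the asymptotic equivalence $A = \Omega(B\log B) \iff A/\log A = \Omega(B)$ already invoked in the corollary following \Cref{thm:upp_gen}.
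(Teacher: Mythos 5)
Your proposal follows essentially the same route as the paper's proof: partition the support into the Laplacian core $[m(X)-3Cr,m(X)+3Cr]$ and the flat tail, lower-bound $\hat Z$ by the core integral, upper-bound the mass outside $[m(X)-\alpha,m(X)+\alpha]$ by a Laplacian tail plus the flat part, and collect the resulting conditions on $n$. Your explicit expressions for $Z_L$, $Z_F$, $N_L$ and the resulting two bounds are correct (the paper's Step~1 only integrates the positive half, so it gets $12C/(\epsilon Ln)$ rather than your $24C/(\epsilon Ln)$, and its Step~2 has a couple of harmless typos in constants; neither affects the rate), and your recognition that the flat-tail inequality needs a small bootstrapping argument via $A=\Omega(B\log B)\Leftrightarrow A/\log A=\Omega(B)$ is in the spirit of the paper's ``elementary asymptotics.''

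One imprecision to fix before writing this up. You claim $Z_F/\hat Z\le \tfrac{(2R+2Cr)\epsilon Ln}{12C}e^{-\epsilon Lrn/4}\le\beta/2$ ``once $n=\Omega\bigl(\log(R/\alpha+1)/(\epsilon Lr)\bigr)$,'' attributing the flat-tail contribution entirely to the second term of the sample complexity, and your case split only addresses the $R$-prefactor and the $\log n$ factor. But $\log(1/\beta)$ does not appear in that term, so the second term alone cannot force $Z_F/\hat Z$ below $\beta/2$ when $\beta$ is small. The $\beta$-dependence has to be pulled from the \emph{first} term: since $\alpha\le r$ and $C>1$, the requirement $n\gtrsim C\log(1/\beta)/(\epsilon L\alpha)$ already implies $\epsilon Lrn\gtrsim \log(1/\beta)$, so $e^{-\epsilon Lrn/8}$ supplies a factor $\beta^{\Theta(1)}$, while the remaining $e^{-\epsilon Lrn/8}$ kills the $(R/\alpha+1)$ prefactor and the $\log n$. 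Splitting the exponent this way (or equivalently noting $te^{-t/4}\le (8/e)\,e^{-t/8}$ and that $t=\epsilon Lrn$ is at least a constant times both $\log(1/\beta)$ and $\log(R/\alpha+1)$) makes the last step rigorous; it is the same intuition the paper uses when it states the pre-simplification bound $n=O\bigl(C\log(1/\beta)/(\epsilon L\alpha)+(C+\log(R/r+1))/(\epsilon Lr)\bigr)$ before invoking $\alpha\le r$.
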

\begin{proof}
$$$$
\begin{itemize}
    \item \textbf{Step 1}: Observe: Since $C>1/4$ it holds $R+4Cr>R+r$. Hence, by change of variables $q=q-m(X)$,
\begin{align*}
    Z_X&= \int_{\myregion} \exp\left(-\frac{\epsilon}{4} \min\left\{  \mystar \left|m(X)-q\right|,\mysquare \right\} \right) \mathrm{d}q\\
    & \geq \int_{0}^{R+r} \exp(-\frac{\epsilon}{4} \min\{ \mystar q,\mysquare\} )\mathrm{d}q \\
    & = \int_{0}^{\min\{3rC,R+r\}} \exp(-\frac{\epsilon}{4} \cdot \mystar q)\mathrm{d}q+\int_{\min\{3rC,R+r\}}^{R+r} \exp(-\frac{\epsilon}{4}Lrn)\mathrm{d}q\\
    & \geq \int_{0}^{\min\{3rC,R+r\}} \exp(-\frac{\epsilon}{4} \cdot \mystar q)\mathrm{d}q \\
    & =  \frac{1}{\frac{\epsilon}{4} \cdot \mystar}\left(1- \exp(-\frac{\epsilon}{4} \cdot \mystar \min\{3Cr, R+r\})  \right)\\
    & =  \frac{12C}{ n \epsilon L}\left(1- \exp(-\frac{n \epsilon L }{12C} \cdot  \min\{3Cr, R+r\}) )  \right)\\
    & \geq  \frac{12C}{ n \epsilon L}\left(1- \exp(-\Theta\left(\frac{n \epsilon Lr}{C}  \right) )  \right)\\
\end{align*}

    \item \textbf{Step 2}:Now using that $m(X) \in [- R-r/2, R+r/2]$ and by change of variables $q=q-m(X)$ we have:
\begin{align*}
    \mathbb{P} [ |\hat{\mathcal{A}}(X)-m(X)| \geq \alpha ] &\leq \frac{2}{Z_X} \int_{\alpha}^{2 R+3Cr+r} q \exp(-\frac{\epsilon}{4} \min\{ \mystar q,\mysquare\} )\mathrm{d}q \\
    &\leq \frac{2}{Z_X} \left( \int_{\alpha}^{+\infty}  \exp(-\frac{\epsilon}{4} \mystar q)\mathrm{d}q+ \int_{3Cr}^{2R+3Cr+r} \exp(-\frac{\epsilon}{4}\mysquare)\mathrm{d}q \right) \\
    &\leq \frac{2}{Z_X} \left(\frac{12C}{ n \epsilon L}\exp(-\frac{\epsilon}{4} \mystar \alpha)+ (2R+r) \exp(-\frac{Lr \epsilon n}{2})\right)
\end{align*}

Hence we conclude for all $X \in \mathcal{H}$

\begin{align*}
    \mathbb{P} [ |\hat{\mathcal{A}}(X)-m(X)|  \geq \alpha] \leq 2 \left(1- \exp(-\Theta\left(\frac{n \epsilon Lr}{C}  \right))\right)^{-1} \left(\exp(-\frac{\epsilon}{4} \mystar \alpha)+(2R+r)\frac{n\epsilon L}{12C} \exp(-\frac{\epsilon Lr n}{2} )\right)
\end{align*}From this and elementary asymptotics we conclude that for $C>1$ with $$n=O\left(C\frac{\log \left(\frac{1}{\beta}\right)}{\epsilon L \alpha}+\frac{C+\log \left( \frac{R}{r}+1\right)}{\epsilon L r}\right)$$ it holds for all $X \in \mathcal{H},$ $\mathbb{P} [ |\hat{\mathcal{A}}(X)-m(X)|  \geq \alpha] \leq \beta.$ Using that $\alpha \leq r$ the above sample complexity bound simplifies to $$n=O\left(C\frac{\log \left(\frac{1}{\beta}\right)}{\epsilon L \alpha}+\frac{\log \left( \frac{R}{\alpha}+1\right)}{\epsilon L r}\right).$$ The proof of the Lemma is complete.

\end{itemize}
 
\end{proof}

\subsection{Proof of Lemma \ref{lemma:typical-set-median}}


We need the following technical lemma.
\begin{lemma}\label{lemma:typical_set_general}
Let $\mathcal{D}$ be an admissible distribution and $X=(X_1,\ldots,X_n)$ consisting of i.i.d. samples from $\mathcal{D}$. Suppose $C>5,$ possibly scaling with $n$, which satisfies $4Ce\exp\left( -\frac{2C}{27}\right)<1/2$. Then for any $T \in [0,\frac{Lr}{4C}n]$ it holds

$$\displaystyle \mathbb{P}\left(\bigcap_{\kappa \in [T,\frac{Lr}{2C}n]\cap \mathbb{Z}} 
\sum_{i\in [n]}\mathbf{1}\{X_i-m(X)\in[0,\frac{C \kappa }{Ln}]\}\ge \kappa +1\right) \geq 1-O\left( (8Ce\exp\left( -\frac{C}{8}\right))^{\lceil T\rceil }+e^{-\Theta \left( L^2r^2n\right)}\right),$$

and

$$\displaystyle \mathbb{P}\left(\bigcap_{\kappa \in [T,\frac{Lr}{2C}n]\cap \mathbb{Z}} 
\sum_{i\in [n]}\mathbf{1}\{X_i-m(X)\in[-\frac{C \kappa }{Ln},0]\}\ge \kappa +1\right) \geq 1-O\left( (8Ce\exp\left( -\frac{C}{8}\right)))^{\lceil T\rceil }+e^{-\Theta \left( L^2r^2n\right)}\right),$$
\end{lemma}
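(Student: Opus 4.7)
The plan is to reduce the stated gap condition to a tail bound on a \emph{gap of uniform order statistics} via the change of variables $Y_i:=F(X_i)$, where $F$ is the cdf of $\mathcal{D}$, and then invoke the classical Beta--Binomial identity together with a standard Chernoff estimate. By smoothing $\mathcal{D}$ outside $[m(\mathcal{D})-r,m(\mathcal{D})+r]$ if necessary (which does not affect any event used below), I may assume that $F$ is continuous, so the $Y_i$'s are i.i.d.\ $\mathrm{Unif}[0,1]$ with order statistics $Y_{(i)}=F(X_{(i)})$. The left-side inequality of the lemma follows by applying the entire argument to $-X_1,\ldots,-X_n$, so I focus on the right-side one, and rephrase it as $X_{(\ell+\kappa)}-X_{(\ell)}\leq C\kappa/(Ln)$ for every $\kappa\in[T,Lrn/(2C)]\cap\mathbb{Z}$.

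The first step is to define the good event $E_0$ that $X_{(\ell)}$ and $X_{(\ell+\lfloor Lrn/(2C)\rfloor)}$ both lie in $[m(\mathcal{D})-r,m(\mathcal{D})+r]$. On $E_0$, the density lower bound gives $Y_{(\ell+\kappa)}-Y_{(\ell)}\geq L\,(X_{(\ell+\kappa)}-X_{(\ell)})$ for every $\kappa\in[1,\lfloor Lrn/(2C)\rfloor]$, so the failure event at scale $\kappa$, $\{X_{(\ell+\kappa)}-X_{(\ell)}>C\kappa/(Ln)\}$, is contained in $\{Y_{(\ell+\kappa)}-Y_{(\ell)}>C\kappa/n\}$. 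A standard multiplicative Chernoff bound on the Binomial counts of samples below $m(\mathcal{D})\pm r/2$ and below $m(\mathcal{D})+r$ (essentially Lemma~3 of \cite{ptr20}) gives $\mathbb{P}(E_0^c)\leq e^{-\Omega(L^2r^2n)}$, accounting for the $e^{-\Theta(L^2r^2n)}$ term in the statement.

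For the main term, I would use the Dirichlet representation of uniform spacings, which identifies $Y_{(\ell+\kappa)}-Y_{(\ell)}\sim\mathrm{Beta}(\kappa,n+1-\kappa)$, and the classical identity $\mathbb{P}(\mathrm{Beta}(a,b)>x)=\mathbb{P}(\mathrm{Binom}(a+b-1,x)<a)$ to rewrite
\[
\mathbb{P}\bigl(Y_{(\ell+\kappa)}-Y_{(\ell)}>C\kappa/n\bigr)=\mathbb{P}\bigl(\mathrm{Binom}(n,C\kappa/n)<\kappa\bigr).
\]
Since the target $\kappa$ sits at a multiplicative factor $1/C$ below the mean $C\kappa$, a multiplicative Chernoff bound for the lower tail yields a per-$\kappa$ estimate of the form $(8Ce\cdot e^{-C/8})^{\kappa}$ (any cruder exponential-in-$\kappa$ upper bound that beats the geometric-sum obstruction would also suffice). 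A union bound over $\kappa\in[\lceil T\rceil,\lfloor Lrn/(2C)\rfloor]$ then collapses to a geometric series whose common ratio $8Ce\cdot e^{-C/8}$ is strictly less than one: the hypothesis $4Ce\exp(-2C/27)<1/2$ is precisely what secures this, via $2C/27<C/8$. Summing gives $O\bigl((8Ce\cdot e^{-C/8})^{\lceil T\rceil}\bigr)$, and combining with the bound on $\mathbb{P}(E_0^c)$ yields the claimed inequality.

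The main obstacle is the careful bookkeeping in the second step: one has to verify that $X_{(\ell+\kappa)}$ remains inside the good density window uniformly in $\kappa\leq Lrn/(2C)$ (which is exactly why $E_0$ must include a tail order statistic in addition to the median), and tune the Chernoff deviation parameter so that the per-$\kappa$ estimate takes the specific $(8Ce\cdot e^{-C/8})^\kappa$ form. Everything else -- flipping signs for the left-side claim, the continuity smoothing above, and absorbing any polynomial-in-$n$ prefactors from the union bound into the $\Theta(L^2r^2n)$ exponent -- is routine.
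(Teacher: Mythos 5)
Your proposal is correct, and it takes a genuinely different and arguably cleaner route than the paper. The paper conditions on the location of $m(X)$, decomposes $\mathcal D$ as a mixture $\tfrac{rL}{2}\,\mathrm{Unif}[m(\mathcal D)-r,m(\mathcal D)+r]+(1-\tfrac{rL}{2})\mathcal D'$, restricts attention to the $n_1\sim\mathrm{Binom}(\lfloor(n-1)/2\rfloor,Lr/2)$ samples drawn from the uniform component on the right of $m(X)$, bounds their conditional density by $[\tfrac{2}{3r},\tfrac{2}{r}]$, and then controls $\bigcup_\kappa A_\kappa^c$ through the telescoping decomposition $\sum_\kappa\mathbb P(A_\kappa^c\cap\bigcap_{s<\kappa}A_s)$ together with a $\binom{m}{m'}\le(me/m')^{m'}$ estimate, which is exactly where the $8Ce$ prefactor originates. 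You instead push everything through the probability-integral transform to i.i.d.\ uniforms, observe that the event is $X_{(\ell+\kappa)}-X_{(\ell)}\le C\kappa/(Ln)$, use that on $E_0$ this is contained in $\{Y_{(\ell+\kappa)}-Y_{(\ell)}>C\kappa/n\}^c$ via the density lower bound, and then invoke the exact identity $Y_{(\ell+\kappa)}-Y_{(\ell)}\sim\mathrm{Beta}(\kappa,n+1-\kappa)$ and its Beta--Binomial dual $\mathbb P(\mathrm{Beta}(\kappa,n+1-\kappa)>x)=\mathbb P(\mathrm{Binom}(n,x)<\kappa)$, reducing the per-$\kappa$ failure probability directly to a lower Chernoff tail of $\mathrm{Binom}(n,C\kappa/n)$ below $1/C$ of its mean. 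This yields $e^{-(1-1/C)^2C\kappa/2}\le e^{-C\kappa/8}$ for $C\ge 2$, and a plain geometric union bound over $\kappa\ge\lceil T\rceil$ finishes; the paper's telescoping and the $n_1$-conditioning are both unnecessary in your version. The only bookkeeping you flag --- ensuring $X_{(\ell+\lfloor Lrn/(2C)\rfloor)}$ also stays in the density window so $F$ has slope $\ge L$ between the relevant order statistics, and a tie-breaking/smoothing device to make $F$ continuous so the $Y_i$ are exactly $\mathrm{Unif}[0,1]$ --- is indeed routine. Your route buys a tighter per-$\kappa$ constant (no $8Ce$ factor) and a more transparent structure; the paper's route keeps the conditional-independence picture explicit at the cost of heavier machinery.
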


Before proving the Lemma \ref{lemma:typical_set_general}, we show how it can be used to prove the following more general version of Lemma \ref{lemma:typical-set-median} where $C>0$ is allowed to potentially scale with $n$.
\begin{lemma}\label{lemma:typical-set-median11} Let $\mathcal{D}$ be an admissible distribution, $\beta \in (0,1),$ $n \geq 3$ and $X=(X_1,\ldots,X_n)$ consisting of i.i.d. samples from $\mathcal{D}$. Suppose $C>5$, possibly scaling with $n$, which satisfies $4Ce\exp\left( -\frac{2C}{27}\right)<1/2$. Then for some $E>0$, $C'=C'(C)>0$ with $C'(C)=\Theta(\frac{1}{C})$ as $C$ grows, if $n \geq E \frac{\log \frac{1}{\beta}}{L^2r^2}$ then it holds
$$\displaystyle \mathbb{P}\left( \exists X' \in \mathcal{H} \text{ s.t. } d_H\left(X , X' \right)  \leq C' \log \frac{1}{\beta}, m(X')=m(X) \right) \geq 1-\beta.$$
\end{lemma}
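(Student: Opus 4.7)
The plan is to invoke Lemma \ref{lemma:typical_set_general} at an appropriate threshold $T$, and then turn any sample $X$ satisfying the conclusion of that lemma into a nearby element of $\mathcal{H}$ by overwriting at most $T$ coordinates with the value $m(X)$. First, using the hypothesis $4Ce\exp(-2C/27)<1/2$ (which in particular implies $8Ce\exp(-C/8)<1$), I would set $T:=\lceil c_0 \log(4/\beta)/C\rceil$, with $c_0$ chosen so that $(8Ce\exp(-C/8))^{T}\leq \beta/4$; this yields $T=\Theta(\log(1/\beta)/C)$. Applying Lemma \ref{lemma:typical_set_general} at this value of $T$ for both the right- and left-side families of constraints, together with a standard quantile concentration bound (for instance \cite[Lemma~3]{ptr20}) controlling $|m(X)-m(\mathcal{D})|$, and requiring $n\geq E \log(1/\beta)/(L^2 r^2)$ for a sufficiently large absolute constant $E$ (which also makes the $e^{-\Theta(L^2 r^2 n)}$ slack in Lemma \ref{lemma:typical_set_general} at most $\beta/4$, forces $T\leq \lfloor Lnr/(2C)\rfloor$, and ensures $n\geq 2T$), I would obtain an event $\mathcal{E}$ of probability at least $1-\beta$ on which (i) $m(X)\in[-R-r/2,R+r/2]$ and (ii) all concentration constraints in the definition of $\mathcal{H}$ hold for every integer $\kappa\in[T,\lfloor Lnr/(2C)\rfloor]$, on both sides of $m(X)$.

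On $\mathcal{E}$, I would define $X'$ by choosing any $T$ coordinates of $X$ and overwriting them with the value $m(X)$, leaving the other coordinates unchanged. Two simple observations then verify that $X'\in\mathcal{H}$ and $m(X')=m(X)$. \textbf{First}, the left median is preserved: since $m(X)=X_{(\lfloor n/2\rfloor)}$ we have $|\{i:X_i<m(X)\}|\leq \lfloor n/2\rfloor-1$ and $|\{i:X_i\leq m(X)\}|\geq \lfloor n/2\rfloor$, and overwriting any coordinate by $m(X)$ can only shrink the first set and can only enlarge the second, so $X'_{(\lfloor n/2\rfloor)}=m(X)$ as well. \textbf{Second}, all concentration constraints hold. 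For $\kappa\geq T$ the constraint already held for $X$, and the overwrite operation cannot decrease the count $\sum_i \mathbf{1}\{X_i-m(X)\in[0,\kappa C/(Ln)]\}$: each single replacement either removes an element from the interval and inserts $m(X)$ back into it (net change $0$), or adds $m(X)$ to the interval (net change $+1$); the symmetric assertion holds on the left. For $\kappa<T$, the $T$ overwritten coordinates alone contribute at least $T\geq \kappa+1$ to both the right- and left-side counts, so these constraints hold trivially. Combined with (i), this gives $X'\in\mathcal{H}$, $m(X')=m(X)$, and $d_H(X,X')\leq T=\Theta(\log(1/\beta)/C)$, so taking $C'(C):=c_0/C+O(1/C)$ yields the stated bound with $C'(C)=\Theta(1/C)$.

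The only mildly non-routine point is the monotonicity used in the second observation: one must check that overwriting a coordinate by $m(X)$ never strictly decreases the count inside any interval centered at $m(X)$. This is immediate because the overwritten value $m(X)$ lies in every such interval $[0,\kappa C/(Ln)]$ (and $[-\kappa C/(Ln),0]$), so even the worst case, in which the original value was already inside the interval, produces a net change of zero. With this observation the large-$\kappa$ constraints supplied by Lemma \ref{lemma:typical_set_general} survive the modification for free, while the small-$\kappa$ constraints are enforced by the overwrites themselves. I expect the main care in writing out the proof will lie in tracking the three small probabilities (Lemma \ref{lemma:typical_set_general}, the quantile concentration, and the $e^{-\Theta(L^2r^2n)}$ tail) so that their sum is at most $\beta$, and in verifying the mild arithmetic constraints $T\leq \lfloor Lnr/(2C)\rfloor$ and $n\geq 2T$ which the sample-size hypothesis $n\geq E\log(1/\beta)/(L^2 r^2)$ forces.
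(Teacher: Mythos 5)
Your proposal is correct and follows essentially the same route as the paper: invoke Lemma~\ref{lemma:typical_set_general} at a threshold $T=\Theta(\log(1/\beta)/C)$, absorb the remaining small probabilities into $\beta$ via the sample-size hypothesis, then overwrite a few coordinates with $m(X)$ and check that this preserves the left median and all counting constraints. The only (cosmetic) difference is that the paper overwrites $T$ points on each side of $m(X)$ (so $d_H\le 2T$) whereas you overwrite $T$ arbitrary coordinates; your version is slightly leaner and, in addition, makes explicit the quantile-concentration step that guarantees $m(X)\in[-R-r/2,R+r/2]$, which the paper's write-up leaves implicit.
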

\begin{proof}[Proof of Lemma \ref{lemma:typical-set-median11}]
We begin by applying Lemma \ref{lemma:typical_set_general} for sufficiently large $E>0$.

 We will choose $T=\frac{C'(C)}{2} \log \frac{1}{\beta}$ for appropriate choice $C'(C)>0$ with $C'(C)=\Theta(1/C)$ as $C$ grows. We claim that for any choice of such a function $C'(C)$ and in particular choice of $T$ for sufficiently large $E>0$ it holds $$T=\frac{C'(C)}{2} \log \frac{1}{\beta}  \leq \frac{Lr}{4C}n $$which suffices for us to apply the Lemma. To prove this note that the equivalently we need $$ n \geq 2C \frac{ C'(C)}{Lr} \log \frac{1}{\beta} .$$  Since $Lr \leq 1/2$ for any constant $E>0,$ if $n \geq E \frac{\log \frac{1}{\beta}}{L^2r^2}$ then $n \geq \frac{E}{2} \frac{ \log \frac{1}{\beta}}{Lr}.$ Hence, using that $C'(C)=\Theta(1/C)$  by choosing $E>0$ sufficiently large but constant, the choice of $T$ is indeed valid for apply the Lemma \ref{lemma:typical_set_general}.

We choose $T=\frac{C'(C)}{2} \log \frac{1}{\beta}$ for appropriate $C'(C)>0$ with $C'(C)=\Theta(1/C)$ as $C$ grows, so that using Lemma \ref{lemma:typical_set_general} both the two probabilistic guarantees hold with probability at least $1-\frac{\beta}{2}-O\left(e^{-\Theta \left( L^2r^2n\right)}\right).$ Notice that this is possible since this is the exact scaling of $T$ for which $(8Ce\exp\left( -\frac{C}{8}\right))^{\lceil T\rceil }=\Theta\left(\beta \right).$

Taking now  $E>0$ further sufficiently large but still constant since $n \geq E \frac{\log \frac{1}{\beta}}{L^2r^2}$ we can make sure the probabilistic guarantees hold with probability at least $1-\beta$. Combining the above, with probability at least $1-\beta$ the following event holds: for all $\kappa \in [T,\frac{Lr}{2C}n]\cap \mathbb{Z},$ \begin{equation}\label{eq:1k} \sum_{i\in [n]}\mathbf{1}\{X_i-m(X)\in[-\frac{C \kappa }{Ln},0]\}\ge \kappa +1 \end{equation}and \begin{equation} \label{eq:2k} \sum_{i\in [n]}\mathbf{1}\{X_i-m(X)\in[-\frac{C \kappa }{Ln},0]\}\ge \kappa +1.\end{equation} In words, all but, the closest to $m(X)$, $T=\frac{C'(C)}{2} \log \frac{1}{\beta}$ intervals on the left of $m(X)$, and all but, the closest to $m(X)$, $T=\frac{C'(C)}{2} \log \frac{1}{\beta}$ interval on the right of $m(X)$, satisfy their corresponding constraints described in the typical set $\mathcal{H}$.

Now let us consider a data-set $X$ for which \eqref{eq:1k} and \eqref{eq:2k} hold. We show that given these events one can construct an $X' \in \mathcal{H}$ satisfying the conditions described in the event considered in Lemma \ref{lemma:typical-set-median11}. First recall that as $T \leq \frac{Lr}{4C}n$ since $Lr \leq 1/2$ and $C>5$, it necessarily holds $T \leq n/40.$ Since we assume $n \geq 3$ it holds $T<n/2-2.$ Now, as there are at least $n/2-2$ data-points on the left of $m(X)$ and $n/2-2$ data-points on the right of $m(X)$ we can modify $X$ by choosing arbitrary $T$ data-points on the right of $m(X)$ and arbitrary $T$ data-points on the left of $m(X)$ and change all their position to $m(X)$. Notice that such a change produces a new data-set $X'$ which has Hamming distance $2T=C'(C) \log \frac{1}{\beta}$ with $X$ and has the same median with $X$, $m(X)=m(X')$.

Notice that as we moved data-points closer to the median, all satisfied constraint from $\mathcal{H}$ by the data-set $X$ according to \eqref{eq:1k} and \eqref{eq:2k} continue to be satisfied from $X'$. On top of this, since at least $2T$ data-points of $X'$ take now exactly the value of the left empirical median, the data-set $X'$ necessarily satisfies also the, potentially violated by $X$, constraints corresponding to the $T$ intervals on the left of $m(X')=m(X)$ and the $T$ intervals on the right of $m(X')=m(X)$. We conclude that $X'$ satisfies all the constraints described in $\mathcal{H}$ and therefore, $X' \in \mathcal{H},$ which completes the proof. 
\end{proof}

For the rest part we focus on proving Lemma \ref{lemma:typical_set_general}.
\begin{proof}[Proof of Lemma \ref{lemma:typical_set_general}]

We start by noticing that by identical reasoning as the derivation of inequality (14) of \cite{Subgaussian} in the proof of Lemma 3 in \cite{Subgaussian} we have for $t=r/2 \in [0,r]$ 
\begin{equation} \label{eq:marco} \mathbb{P}\left(|m(X)-m(\mathcal{D})| \geq r/2\right) \leq 2e^{-nL^2r^2/8}.
\end{equation}

Now we focus on proving the first out of the two probabilistic guarantee, $$\displaystyle \mathbb{P}\left(\bigcap_{\kappa \in [T,\frac{Lr}{2C}n]\cap \mathbb{Z}} 
\sum_{i\in [n]}\mathbf{1}\{X_i-m(X)\in[0,\frac{C \kappa }{Ln}]\}\ge \kappa +1\right) \geq 1-O\left( (8Ce\exp\left( -\frac{C}{8}\right))^{\lceil T\rceil }+e^{-\Theta \left( L^2r^2n\right)}\right),$$as the second follows naturally by the symmetric argument around zero. We make the following three observations.

First, observe that generating i.i.d. samples from $\mathcal{D}$ and then determining the left empirical median $m(X)$ based on the realization of $X_1,\ldots,X_n$ is equal in law with first sampling the place of the left empirical median $m(X)$ based on its distribution and sampling uniformly at random an $X_i$ so that $X_i=m(X)$ and then generating $\lceil \frac{n-1}{2} \rceil-1(n \text{ is even})$ i.i.d. samples from $\mathcal{D}$ conditional to be on the left of $m(X)$ and generating $\lceil \frac{n-1}{2} \rceil$ i.i.d. samples from $\mathcal{D}$ conditional to be on the right of $m(X)$. To model the underlying the randomness we can define for each $i$, the trinary random variables where $C_i=-1$ if the sample $i$ is chosen to be on the left of $m(X)$, $C_i=0$ if the sample equals $m(X)$ and $C_i=1$ if they are chosen to be on the right.

Second, using the fact that $\mathcal{D}$ has density lower bounded by $L$ in $[m(\mathcal{D})-r,m(\mathcal{D})+r]$, observe that our distribution $\mathcal{D}$ can be decomposed as a mixture,
\begin{equation} \label{eq:decomp}
\mathcal{D}=\frac{rL}{2}\mathrm{Unif}[m(\mathcal{D})-r,m(\mathcal{D})+r]+\left(1-\frac{rL}{2}\right)\mathcal{D}'
\end{equation} for some other probability measure $\mathcal{D}'$ on the reals. Now using the first observation, we first sample  the position of $m(X)$ and describe the sampling of $X_i$ as follows. If $C_i=0$ we simply set $X_i=m(X)$. For the other two cases, we assume $C_i=1$ as the other case is symmetric. If $m(X)-m(\mathcal{D}) \geq r $ then we sample from the distribution $\left(1-\frac{rL}{2}\right)\mathcal{D}'$ conditional on being on the right of $m(X)$. If $m(X)-m(\mathcal{D}) \leq r $ then we first flip a coin $B_i\overset{d}{=}\mathrm{Bernoulli}(\frac{rL}{2})$ and with probability $\frac{rL}{2}$ (that is when $B_i=1$) we sample from $\mathrm{Unif}[m(\mathcal{D})-r,m(\mathcal{D})+r]$ conditional on being on the right of $m(X)$ and with probability $1-\frac{rL}{2}$ (that is when $B_i=0$) we sample from some distribution $\mathcal{D}'$  conditional on being on the right of $m(X)$.

Third, observe that both the event of interests in the statement Lemma \ref{lemma:typical_set_general} of becomes less probable when we restrict ourselves to only a subset of the $n$ samples. Hence, since we want to prove a lower bound on the probability of the event we can restrict ourselves to an arbitrary subset.

Now, using \eqref{eq:marco} by neglecting an event of probability  $2e^{-nL^2r^2/8} \leq e^{-\Theta \left( L^2r^2n\right)}$ from now on we condition on $|m(X)-m(\mathcal{D})| \leq r/2$. Now using the three observations above, we restrict ourselves only on the $n_1 \leq n$ samples that satisfy $B_i=C_i=1$, that is they are samples from $\mathrm{Unif}[m(\mathcal{D})-r,m(\mathcal{D})+r]$ conditional on being on the right of $m(X)$.  We denote these samples by $X_1,\ldots,X_{n_1}$ for simplicity. Notice that $\mathrm{Unif}[m(\mathcal{D})-r,m(\mathcal{D})+r]$ conditional on being on the right of $m(X)$ is just distributed as $\mathrm{Unif}[m(X),m(\mathcal{D})+r]$. Hence, conditioning on the value of $n_1$, the density of the conditional distribution of each $X_1,\ldots,X_{n_1}$ given the $m(X)$ can be straightforwardly check to satisfy for each $i=1,2,\ldots,n_1,$ 
\begin{equation}\label{eq:cond_density}
    \frac{2}{3r} \leq f_{X_i}(u|m(X),|m(X)-m(\mathcal{D})| \leq r/2) \leq \frac{2}{r},  u \in [m(X),m(\mathcal{D})+r].
\end{equation}

Combining the above, to prove our result it suffices to prove $$\displaystyle \mathbb{P}\left(\bigcap_{\kappa \in [T,\frac{Lr}{2C}n]\cap \mathbb{Z}}  
\sum_{i\in [n_1]}\mathbf{1}\{X_i-m(X)\in[0,\frac{C \kappa }{Ln}]\}\ge \kappa +1\right) \geq 1-O\left( (8Ce\exp\left( -\frac{C}{8}\right)))^{\lceil T\rceil }+e^{-\Theta \left( L^2r^2n\right)}\right).$$

Since $m(X)=X_i$ for some $i$ by definition of the left empirical median, it suffices to prove that for $$J_{\kappa}:=(m(X),m(X)+\frac{C\kappa }{Ln} ], \text{  } \kappa=\lceil T\rceil,\lceil T\rceil + 1,\ldots,\lfloor \frac{Lr}{2C}n \rfloor-1$$ and sequence of events $$A_{\kappa}:=\{\sum_{i\in [n_1]}\mathbf{1}\{X_i\in J_{\kappa}\}\ge \kappa \} , \text{  } \kappa=\lceil T\rceil,\lceil T\rceil + 1,\ldots,\lfloor \frac{Lr}{2C}n \rfloor-1$$ it holds $$ \mathbb{P}\left(\bigcap_{\kappa=\lceil T\rceil }^{\lfloor \frac{Lr}{2C}n \rfloor-1} A_{\kappa} \bigg{|} |m(X)-m(\mathcal{D})|\leq r/2\right) \geq 1-O\left( (8Ce\exp\left( -\frac{C}{8}\right)))^{\lceil T\rceil }+e^{-\Theta \left( L^2r^2n\right)}\right)).$$ Using a union bound it suffices to show $$ \sum_{\kappa=\lceil T\rceil }^{\lfloor \frac{Lr}{2C}n \rfloor-1}\mathbb{P}\left(A^c_{\kappa} \cap \bigcap_{s=\lceil T\rceil}^{\kappa-1} A_{s}\bigg{|} |m(X)-m(\mathcal{D})|\leq r/2\right) \leq  O\left( (8Ce\exp\left( -\frac{C}{8}\right)))^{\lceil T\rceil }+e^{-\Theta \left( L^2r^2n\right)}\right).$$

Observe that as we are conditioning on $|m(X)-m(\mathcal{D})|\leq r/2$ for all $\kappa$ of interest $$J_{\kappa} \subseteq [m(\mathcal{D})-r,m(\mathcal{D})+r].$$ Hence using  \eqref{eq:cond_density} we have that, conditioned on $n_1$, for each $i \in [n_1]$ and $\kappa$, \begin{equation}\label{eq:prob_bound} \mathbb{P}\left(X_i \in J_{\kappa}|m(X),|m(X)-m(\mathcal{D})|\leq r/2 \right) \in [\frac{2}{3r} |J_{\kappa}|, \frac{2}{r}|J_{\kappa}| ].\end{equation} Furthermore recall that $n_1\overset{d}{=}\mathrm{Binom}\left(\lfloor \frac{n-1}{2} \rfloor, \frac{Lr}{2}\right)$ and that it holds $\kappa \leq \frac{Lrn}{2C}.$ Since $C>5$ by standard concentration inequalities, we have that with probability $1-e^{-\Theta\left(L^2r^2n\right)},$ it holds for all $\kappa$ of interest 
\begin{equation}\label{eq:binom_cond}
    \kappa<\frac{Lrn}{9}<\frac{n_1}{2}<Lrn.
\end{equation}

In what follows we condition on the event \eqref{eq:binom_cond}.
Suppose $\kappa=\lceil T \rceil$. Then using \eqref{eq:prob_bound} by definition the probability of $A^c_{\lceil T \rceil}$ is at most the probability a sample from a binomial distribution with $N:=n_1$ draws and probability $\frac{2}{r}|J_{\lceil T \rceil}|=\frac{2C\lceil T \rceil}{nLr}$ is at most $\lceil T \rceil$. Now conditioning on $n_1$ and denoting the Binomial random variable by $Z$ we have by the additive form of the Chernoff's inequality,
\begin{align*}
 \mathbb{P}\left(A^c_{\lceil T \rceil} \bigg{|}m(X), |m(X)-m(\mathcal{D})|\leq r/2, n_1\right)  & \leq   \mathbb{P}\left(Z \leq \lceil T \rceil\right)\\
 & \leq \exp\left(-\frac{\left(\frac{2n_1C \lceil T \rceil }{Lnr}-\lceil T \rceil\right)^2}{2n_1 \frac{2C \lceil T \rceil}{Lnr}\left(1-\frac{2C\lceil T \rceil}{Lnr}\right) }  \right)
\end{align*}Using now that we condition on \eqref{eq:binom_cond} it holds $\frac{2Lnr}{9}<n_1<2Lrn$ and $T<\frac{Lnr}{4C}$ we have
\begin{align*}
 \mathbb{P}\left(A^c_{\lceil T \rceil} \bigg{|} |m(X)-m(\mathcal{D})|\leq r/2\right) &=\mathbb{E}_{m(X),n_1}\left[\mathbb{P}\left(A^c_{\lceil T \rceil} \bigg{|}m(X), |m(X)-m(\mathcal{D})|\leq r/2, n_1\right) \right]\\
 & \leq \exp\left(-\frac{\left(\frac{4}{9}C-1\right)^2 \lceil T \rceil ^2}{2C \lceil T \rceil } \right)
\end{align*}which since $C>5>\frac{9}{7}$ gives that 
\begin{align}\label{eq:firstbound}
 \mathbb{P}\left(A^c_{\lceil T \rceil} \bigg{|} |m(X)-m(\mathcal{D})|\leq r/2\right)  & \leq \exp\left(-\frac{C \lceil T \rceil}{8} \right).
\end{align}

Now for every $\kappa>\lceil T \rceil$ notice that the event $A^c_{\kappa} \cap \bigcap_{s=0}^{\kappa-1} A_{s}$ can happen if and only if $\kappa$ of the samples belong in $J_{\kappa -1}$ and the rest $n_1-\kappa$ samples belong to $J^c_{\kappa}$. Therefore using the above observations and \eqref{eq:prob_bound}, \begin{align*} \mathbb{P}\left(A^c_{\kappa} \cap \bigcap_{s=\lceil T \rceil}^{\kappa-1} A_{s} \bigg{|}m(X), |m(X)-m(\mathcal{D})|\leq r/2,n_1\right) & \leq \binom{ n_1}{\kappa} \left(\frac{2}{r}|J_{\kappa -1}|\right)^{\kappa} \left(1- \frac{2}{3r}|J_{\kappa}| \right)^{n_1-\kappa}\\
& \leq \binom{n_1}{\kappa} \left(\frac{2C(\kappa-1)}{nLr} \right)^{\kappa} \left(1- \frac{2C\kappa}{3Lrn} \right)^{n_1-\kappa}\\
& \leq \left(\frac{n_1e}{\kappa}\right)^{\kappa} \left(\frac{2C\kappa}{Lrn} \right)^{\kappa} \exp\left( -C\frac{2(n_1-\kappa)\kappa}{3Lrn}\right)\\
& \leq \left(2Ce\frac{n_1}{Lrn}\right)^{\kappa}  \exp\left( -C\frac{2(n_1-\kappa)\kappa}{3Lrn}\right),
\end{align*}where we used the elementary inequalities $\binom{m}{m'} \leq \left(\frac{me}{m'}\right)^{m'}$, $1+x \leq e^x$ and that by definition $n_1 \leq n.$

The last displayed inequality conditioned on the event \eqref{eq:binom_cond} implies

\begin{align} \mathbb{P}\left(A^c_{\kappa} \cap \bigcap_{s=\lceil T \rceil}^{\kappa-1} A_{s} \bigg{|} |m(X)-m(\mathcal{D})|\leq r/2\right) 
& =\mathbb{E}_{m(X),n_1}\left[ \mathbb{P}\left(A^c_{\kappa} \cap \bigcap_{s=0}^{\kappa-1} A_{s} \bigg{|}m(X), |m(X)-m(\mathcal{D})|\leq r/2,n_1\right)
\right] \nonumber \\
& \leq \mathbb{E}_{n_1}\left[\left(4Ce\right)^{\kappa}  \exp\left( -\frac{2C}{3}\frac{(n_1-\frac{n_1}{2})\kappa}{Lrn}\right)\right] \nonumber\\
& \leq \mathbb{E}_{n_1}\left[\left(4Ce\right)^{\kappa}  \exp\left( -\frac{2C}{3}\frac{\frac{n_1}{2}\kappa}{Lrn}\right)\right] \nonumber\\
& \leq \mathbb{E}_{n_1}\left[\left(4Ce\right)^{\kappa}  \exp\left( -\frac{2C}{27}\kappa\right)\right] \nonumber \\
& = \left(4Ce\exp\left( -\frac{2C}{27}\right)\right)^{\kappa} \label{eq:secondbound}
\end{align}

Using \eqref{eq:firstbound} for the first term and since $4Ce\exp\left( -\frac{2C}{27}\right)<1/2$ a geometric summation over $\kappa \geq \lceil T \rceil$ for the rest terms, we have that conditional on \eqref{eq:binom_cond},
\begin{align*}
\sum_{\kappa=\lceil T \rceil}^{\lfloor \frac{Lr}{2C}n \rfloor-1}\mathbb{P}\left(A^c_{\kappa} \cap \bigcap_{s=\lceil T \rceil}^{\kappa-1} A_{s} \bigg{|} |m(X)-m(\mathcal{D})|\leq r/2\right)  & \leq \exp\left(-\frac{C \lceil T \rceil }{8} \right)+( 8Ce\exp\left( -\frac{2C}{27}\right))^{\lceil T \rceil}.
\end{align*}
Taking now into account the probability of the conditioned event we have \eqref{eq:binom_cond},  
\begin{align*}
\sum_{\kappa=\lceil T \rceil}^{\lfloor \frac{Lr}{2C}n \rfloor-1}\mathbb{P}\left(A^c_{\kappa} \cap \bigcap_{s=\lceil T \rceil}^{\kappa-1} A_{s} \bigg{|} |m(X)-m(\mathcal{D})|\leq r/2\right)  & \leq \exp\left(-\frac{C \lceil T \rceil }{8} \right)+( 8Ce\exp\left( -\frac{2C}{27}\right))^{\lceil T \rceil}+e^{-\Theta\left(L^2r^2n\right)} \\
& \leq 2( 8Ce\exp\left( -\frac{C}{8}\right))^{\lceil T \rceil}+e^{-\Theta\left(L^2r^2n\right)} \\
\end{align*}where in the last line since $\frac{C}{8}>\frac{2C}{27}$ and $8Ce>1$ under our assumptions it holds for all $T \geq 0$
$$\exp\left(-\frac{C \lceil T \rceil }{8} \right)+( 8Ce\exp\left( -\frac{2C}{27}\right))^{\lceil T \rceil} \leq 2( 8Ce\exp\left( -\frac{C}{8}\right))^{\lceil T \rceil}.$$
The proof is complete.

\end{proof}

\subsection{Proof of Theorem \ref{thm:upper_bound}} 

We establish instead the following slightly more general result which does not assume that $C$ is a constant, but could scale with $n$.

\begin{theorem}\label{thm:upper_bound0}
Suppose $C>0$, possibly scaling with $n$, is bigger than a sufficiently large constant, $\epsilon \in (0,1)$, $\mathcal{D}$ is an admissible distribution and $\mathcal{A}$ is the $\epsilon$-differentially private algorithm defined above. Then for any $\alpha \in (0,r)$ and $ \beta \in (0,1)$ for some $$ n=O\left( \frac{\log \left( \frac{1}{\beta}\right)}{L^2 \alpha^2}+C\frac{ \log \left(\frac{1}{\beta}\right)}{\epsilon L\alpha }+\frac{\log \left(\frac{R}{ \alpha}+1\right)}{\epsilon Lr}\right)$$ it holds $\quad\quad \quad \mathbb{P}_{X_1,X_2,\ldots,X_n \overset{iid}{\sim} \mathcal{D}} [ |\mathcal{A}(X_1,\ldots,X_n)-m\left(\mathcal{D}\right)|  \geq \alpha] \leq \beta.$
\end{theorem}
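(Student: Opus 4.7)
The plan is to combine three high-probability events via a union bound: (i) the empirical median $m(X)$ lies within $\alpha/2$ of $m(\mathcal{D})$; (ii) there exists $X' \in \mathcal{H}$ with $m(X') = m(X)$ and $d_H(X,X') = O(\log(1/\beta))$; and (iii) the restricted algorithm, evaluated at this $X'$, lies within $\alpha/2$ of $m(X')$. The privacy of $\mathcal{A}$, together with the fact that $\mathcal{A}(X') \stackrel{d}{=} \hat{\mathcal{A}}(X')$ on $\mathcal{H}$ (Extension Lemma), transfers event (iii) to a guarantee on $\mathcal{A}(X)$.

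\textbf{Step 1 (empirical median concentration).} By the standard one-dimensional concentration bound for the empirical median under the density lower bound $L$ on $[m(\mathcal{D})-r,m(\mathcal{D})+r]$ (e.g., \cite[Lemma 3]{ptr20}), for $n = \Omega(\log(1/\beta)/(L^2\alpha^2))$ we have $|m(X) - m(\mathcal{D})| \leq \alpha/2$ with probability at least $1-\beta/3$. This accounts for the first term in the claimed sample complexity.

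\textbf{Step 2 (covering by a typical point).} Apply Lemma \ref{lemma:typical-set-median11} with failure probability $\beta/3$: whenever $n = \Omega(\log(1/\beta)/(L^2 r^2))$, which is dominated by Step~1 since $\alpha\le r$, there exists (with probability at least $1-\beta/3$) some $X' \in \mathcal{H}$ with $m(X')=m(X)$ and $d_H(X,X')\le C'(C)\log(3/\beta)$, where $C'(C)=\Theta(1/C)$ for large $C$. In particular $\epsilon\,C'(C)$ is uniformly bounded by some absolute constant $\kappa$.

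\textbf{Step 3 (accuracy on $\mathcal{H}$ transferred through DP).} Apply Lemma \ref{lemma:accuracy-typical} to the deterministic input $X' \in \mathcal{H}$ with target accuracy $\alpha/2$ and confidence parameter $\beta'$ (to be chosen): for $n = \Omega(C\log(1/\beta')/(\epsilon L\alpha) + \log(R/\alpha+1)/(\epsilon L r))$ we obtain $\mathbb{P}(|\hat{\mathcal{A}}(X')-m(X')|\ge \alpha/2)\le \beta'$. Since $X' \in \mathcal{H}$, Proposition \ref{prop_replica} gives $\mathcal{A}(X') \stackrel{d}{=}\hat{\mathcal{A}}(X')$, and the $\epsilon$-DP property (Definition \ref{definition:epsilon-privacy}) applied to the measurable set $S=\{\omega : |\omega-m(X')|\ge \alpha/2\}$ yields
\[ \mathbb{P}\!\left(|\mathcal{A}(X) - m(X')| \ge \alpha/2\right) \le e^{\epsilon d_H(X,X')}\,\beta' \le (3/\beta)^{\epsilon\,C'(C)}\,\beta'. \]
Choose $\beta' := (\beta/3)\cdot(3/\beta)^{-\epsilon\,C'(C)}$ so that the right-hand side equals $\beta/3$. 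Since $\epsilon\,C'(C)\le \kappa$, we have $\log(1/\beta') = (1+\epsilon C'(C))\log(3/\beta) = O(\log(1/\beta))$, so the sample-size requirement of this step becomes $n = O(C\log(1/\beta)/(\epsilon L \alpha) + \log(R/\alpha+1)/(\epsilon L r))$, matching the last two terms of the theorem.

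Combining the three events via a union bound, all hold with probability at least $1-\beta$. Using $m(X')=m(X)$ and the triangle inequality, $|\mathcal{A}(X)-m(\mathcal{D})| \le |\mathcal{A}(X)-m(X')| + |m(X)-m(\mathcal{D})| \le \alpha/2+\alpha/2 = \alpha$, which is the desired conclusion. The main technical subtlety is Step 3: we must verify that the privacy blow-up $e^{\epsilon d_H(X,X')}$ does not inflate the sample complexity. This is precisely where the sharp Hamming-gap bound $d_H(X,X')=O(\log(1/\beta))$ from Lemma \ref{lemma:typical-set-median11} is essential, together with the observation that $\epsilon\,C'(C)$ remains bounded uniformly in $C$ (including when $C$ scales with $n$); absorbing this constant factor in $\log(1/\beta')$ is exactly what allows the linear-in-$\log(1/\beta)$ second term, in contrast to the quadratic/cubic dependencies in \cite{Subgaussian}.
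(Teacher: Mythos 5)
Your proof is correct and follows essentially the same approach as the paper: concentration of the empirical median, Lemma \ref{lemma:typical-set-median11} to find a nearby $X'\in\mathcal{H}$ with $d_H(X,X')=O(\log(1/\beta))$, transferring Lemma \ref{lemma:accuracy-typical} from $X'$ to $X$ via the $\epsilon$-DP property and $\mathcal{A}(X')\stackrel{d}{=}\hat{\mathcal{A}}(X')$, and absorbing the privacy blow-up $e^{\epsilon d_H(X,X')}$ by requiring $C'(C)$ (hence $\epsilon\,C'(C)$) to be a small enough constant. The only difference is bookkeeping: you split the budget as $\beta/3$ per event and tune $\beta'$ explicitly, whereas the paper conditions on the typical event $\mathcal{T}_\gamma$ (picking up a $1/(1-\gamma)$ factor) and introduces a single auxiliary parameter $\gamma$, tuned as a polynomial in $\beta$ at the end; these are equivalent up to constants.
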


\begin{proof} Let $X=(X_1,\ldots,X_n)$ the $n$-tuple of i.i.d. samples from $\mathcal{D}.$ Let us use a parameter $\gamma \in (0,1)$ which we later choose a polynomial function of $\beta.$
We consider the event $$\mathcal{T}_{\gamma}:=\{\exists X' \in \mathcal{H} \text{ s.t. } d_H\left(X , X' \right)  \leq C' \log \frac{1}{\gamma}, m(X')=m(X)\},$$where $C'=C'(C)$ is chosen to satisfy the conclusion of Lemma \ref{lemma:typical-set-median11}. In particular it holds $C'(C)=\Theta( \frac{1}{C})$ as $C$ grows to infinity.

Notice that since $ \alpha \leq r$ for some $n=O\left( \frac{\log \frac{1}{\gamma}}{L^2\alpha^2}\right)$ it holds $n \geq D\frac{\log \frac{1}{\gamma}}{L^2r^2}$ for the $D>0$ defined in Lemma \ref{lemma:typical-set-median11}. Furthermore, we can assume $C>5$ is sufficiently large such that $4Ce\exp\left( -\frac{2C}{27}\right)<1/2$. Hence, by applying Lemma \ref{lemma:typical-set-median11} we have
\begin{align}
    \mathbb{P}\left[ |\mathcal{A}\left(X\right)-m\left(\mathcal{D}\right)| \geq \alpha \right] & \leq \mathbb{P}\left[ |\mathcal{A}\left(X\right)-m\left(\mathcal{D}\right)| \geq \alpha , X \in \mathcal{T}_{\gamma}\right]+\mathbb{P}\left[ X \not \in \mathcal{T}_{\gamma} \right] \nonumber\\
    & \leq \mathbb{P}\left[ |\mathcal{A}\left(X\right)-m\left(\mathcal{D}\right)| \geq \alpha | X \in \mathcal{T}_{\gamma}\right]+\gamma. \label{First0}
\end{align} 
Now conditioning on $X \in \mathcal{T}_{\gamma}$ we have that there exists an $X' \in \mathcal{H}$ with $d_H\left(X , X' \right)  \leq C' \log \frac{1}{\gamma}$ and $m(X')=m(X)$. Since the algorithm $\mathcal{A}$ is $\epsilon$-differentially private by its definition, we have that $\mathcal{A}(X)$ and $\mathcal{A}(X')$ assigns to each output value the same probability mass up to a multiplicative factor of $e^{\epsilon d_H(X,X')} \leq e^{\epsilon C' \log \frac{1}{\gamma}} \leq e^{C' \log \frac{1}{\gamma}},$ where in the last inequality we use that $\epsilon<1.$ Hence
\begin{align}
\mathbb{P}\left[ |\mathcal{A}\left(X\right)-m\left(\mathcal{D}\right)| \geq \alpha | X \in \mathcal{T}_{\gamma}\right] & \leq \mathbb{E}\left[e^{\epsilon d_H(X,X')}\mathbb{P}\left[ |\mathcal{A}\left(X'\right)-m\left(\mathcal{D}\right)| \geq \alpha | X \in \mathcal{T}_{\gamma}\right] \right] \nonumber\\
& \leq  \frac{e^{C' \log \frac{1}{\gamma}}}{ \mathbb{P}\left[X \in \mathcal{T}_{\gamma} \right]} \max_{X' \in \mathcal{H}, m(X')=m(X)}\mathbb{P}\left[ |\mathcal{A}\left(X'\right)-m\left(\mathcal{D}\right)| \geq \alpha \right] \nonumber \\
&\leq  \frac{e^{C' \log \frac{1}{\gamma}}}{1-\gamma} \max_{X' \in \mathcal{H}, m(X')=m(X)}\mathbb{P}\left[ |\mathcal{A}\left(X'\right)-m\left(\mathcal{D}\right)| \geq \alpha \right] \nonumber\\
&= \frac{e^{C' \log \frac{1}{\gamma}}}{1-\gamma} \max_{X' \in \mathcal{H}, m(X')=m(X)}\mathbb{P}\left[ |\hat{\mathcal{A}}\left(X'\right)-m\left(\mathcal{D}\right)| \geq \alpha \right], \label{Final1}\\
\end{align}
where in the last line we use that for all $X' \in \mathcal{H},$ it holds $\mathcal{A}(X')\overset{d}{=}\hat{\mathcal{A}}(X').$

According to Lemma \ref{lemma:accuracy-typical} for some $n=O\left( C\frac{\log \frac{1}{\gamma}}{\epsilon L \alpha}+\frac{\log \left(\frac{R}{\alpha}+1\right)}{\epsilon L r}\right)$ we can guarantee
\begin{align}
   \max_{X' \in H, m(X')=m(X)} \mathbb{P}\left[ |\hat{\mathcal{A}}\left(X\right)-m\left(X\right)| \geq \frac{\alpha}{2} \right]  \leq \gamma.\label{eq:atypical}
\end{align} Using classical results we have that for some $n=O\left(\frac{\log(\frac{1}{\gamma})}{L^2\alpha^2} \right)$ it holds \begin{equation}\label{eq:typical3}
    \mathbb{P} [ |m(X)-m\left(\mathcal{D}\right)|  \geq \frac{\alpha}{2}] \leq \gamma.
\end{equation}Combining the \eqref{eq:atypical} and \eqref{eq:typical3} we have for some $n=O\left(\frac{\log(\frac{1}{\gamma})}{L^2\alpha^2}+C\frac{\log \frac{1}{\gamma}}{\epsilon L \alpha}+\frac{\log \left(\frac{R}{\alpha}+1\right)}{\epsilon L r}  \right)$ it holds
\begin{align}
   \max_{X' \in H, m(X')=m(X)} \mathbb{P}\left[ |\hat{\mathcal{A}}\left(X\right)-m\left(\mathcal{D}\right)| \geq \alpha \right]  \leq 2\gamma.\label{eq:atypical2}
\end{align} or, combining with \eqref{Final1}, 
\begin{align*}
\mathbb{P}\left[ |\mathcal{A}\left(X\right)-m\left(\mathcal{D}\right)| \geq \alpha | X \in \mathcal{T}_{\gamma}\right] & \leq  \frac{e^{C' \log \frac{1}{\gamma}}}{1-\gamma} 2 \gamma.
\end{align*} or, combining with \eqref{First0},
\begin{align*}
\mathbb{P}\left[ |\mathcal{A}\left(X\right)-m\left(\mathcal{D}\right)| \geq \alpha \right] \leq  \frac{e^{C' \log \frac{1}{\gamma}}}{1-\gamma} 2 \gamma+\gamma.
\end{align*}Since $C'(C)=\Theta(\frac{1}{C})$ we can assume $C>0$ sufficiently large so that $C'(C)<\frac{1}{2}$. Hence for these values of $C$ some $n=O\left(\frac{\log(\frac{1}{\gamma})}{L^2\alpha^2}+C\frac{\log \frac{1}{\gamma}}{\epsilon L \alpha}+\frac{\log \left(\frac{R}{\alpha}+1\right)}{\epsilon L r}  \right)$ it holds \begin{align*}
\mathbb{P}\left[ |\mathcal{A}\left(X\right)-m\left(\mathcal{D}\right)| \geq \alpha \right] \leq  \frac{\sqrt{\gamma}}{1-\gamma} 2 \gamma+\gamma.
\end{align*}Choosing $\gamma$ appropriately of the order $\gamma=\Theta\left(\beta^{\frac{2}{3}}\right)$ we conclude that $$n=O\left(\frac{\log(\frac{1}{\beta})}{L^2\alpha^2}+C\frac{\log \frac{1}{\beta}}{\epsilon L \alpha}+\frac{\log \left(\frac{R}{\alpha}+1\right)}{\epsilon L r}  \right)$$ it holds \begin{align*}
\mathbb{P}\left[ |\mathcal{A}\left(X\right)-m\left(\mathcal{D}\right)| \geq \alpha \right] \leq  \beta.
\end{align*} The proof is complete.

\end{proof}

\section{Proof for Section \ref{sec:LB}: Lower Bounds}
\begin{proof}[Proof of Proposition \ref{propLB1}]
We argue by contradiction and consider an algorithm $\mathcal{A}$ satisfying the negation of the statement of the proposition.

We first prove that \begin{align}\label{eq:LB11}n=\Omega \left(\frac{r^2 \log (\frac{1}{\beta})}{\alpha^2} \right)\end{align} and then \begin{align}\label{eq:LB12}n=\Omega\left( \frac{\log \left( \frac{1}{\beta}\right)}{L^2 \alpha^2}(1-2Lr)\right).\end{align} Notice that combined the lower bounds imply $$n=\Omega \left(\frac{r^2 \log (\frac{1}{\beta})}{\alpha^2} + \frac{\log \left( \frac{1}{\beta}\right)}{L^2 \alpha^2}(1-2Lr)\right)=\Omega \left(\frac{ \log (\frac{1}{\beta})}{\alpha^2}\frac{(1-Lr)^2}{L^2}\right)=\Omega\left( \frac{\log \left( \frac{1}{\beta}\right)}{L^2 \alpha^2}\right),$$where for the last equality we use $Lr \leq \frac{1}{2}.$  The last displayed equation yields the desired contradiction.

To prove \ref{eq:LB11} notice that all uniform distributions which are supported on an interval of width $2r$ inside $[-R-r,R+r]$ are admissible. Observe now that for a uniform distribution the mean and the median of it are identical. For this reason, $\mathcal{A}$ to satisfy the negation of our statement, it should learn the mean of these uniform distributions of width $r$ from $n$ samples with accuracy $\alpha$ with probability $1-\beta.$ Standard learning theory implies that it should hold $n=\Omega \left(\frac{r^2 \log (\frac{1}{\beta})}{\alpha^2} \right).$

We now turn to \ref{eq:LB12}. Notice that if $Lr=\frac{1}{2}$ the lower bound on $n$ is trivial. Hence, to establish \ref{eq:LB12}, we focus on the case where $Lr<\frac{1}{2}.$

Recall the standard fact that learning the parameter $p$ of a Bernoulli random variable $\mathrm{Bernoulli}\left(p\right)$ at accuracy $\gamma>0$ with probability $1-\beta$ requires $\Omega\left( \frac{\log \left( \frac{1}{\beta}\right)}{\gamma^2}\right)$ samples. 

We know fix $p$. We construct the admissible distribution $\mathcal{D}$ which assigns probability mass $(1-2Lr)(1-p)$ at $-2r$, probability mass $(1-2Lr) p$ at $2r$ and with probability $2Lr$ samples from the uniform distribution on $[-r,r].$ It can be easily checked that $\mathcal{D}$ is admissible with the assumed parameters and median $p\left(\frac{1}{L}-2r\right)+r-\frac{1}{2L}$. In particular, learning the median at accuracy $\alpha$ is equivalent with learning the parameter $p$ at accuracy $\alpha \frac{2L}{1-2Lr}$ which from our assumption on $Lr < \frac{1}{2}$ is $\Theta \left((1-2Lr)^{-1}L\alpha \right).$ Furthermore, notice that for learning the parameter $p$ using the samples from $\mathcal{D}$ one needs to focus only the samples from which are equal to either $2r$ or $-2r$. Therefore the task of learning the median of $\mathcal{D}$ with $n$ samples at accuracy $\alpha$, reduces to learning the parameter $p$ of a $\mathrm{Bernoulli}(p)$ distribution with $N_1=\mathrm{Binom}(n,1-2Lr)$ samples at accuracy $\Theta \left((1-2Lr)^{-1}L\alpha \right).$ Using the standard fact mentioned above and this equivalence, we have that conditional on the event, call it $\mathcal{E}_{\alpha,\beta}(\mathcal{D}),$ that we can learn the median of $\mathcal{D}$ with $n$ samples at accuracy $\alpha$ it holds $$N_1=\Omega\left( \frac{\log \left( \frac{1}{\beta}\right)}{\alpha^2L^2} (1-2Lr)^2\right).$$

Now recall that we assume that $\mathcal{A}$ can learn the median of $\mathcal{D}$ with $n$ samples at accuracy $\alpha$ holds with probability at least $1-\beta.$ Hence $\mathbb{P}\left(\mathcal{E}_{\alpha,\beta}(\mathcal{D})\right) \geq 1-\beta$. Combined with the last observation of the paragraph above, we conclude \begin{align*}
    \mathbb{E}[N_1] \geq \mathbb{P}\left(\mathcal{E}_{\alpha,\beta}(\mathcal{D})\right) \mathbb{E}[N_1|\mathcal{E}_{\alpha,\beta}(\mathcal{D})] \geq (1-\beta)\Omega\left( \frac{\log \left( \frac{1}{\beta}\right)}{\alpha^2L^2}(1-2Lr)^2\right).
\end{align*}Using that $\mathbb{E}[N_1]=(1-2Lr)n$ and that $\beta \in (0,\frac{1}{2})$ we have  $$(1-2Lr)n=\Omega\left( \frac{\log \left( \frac{1}{\beta}\right)}{L^2 \alpha^2}(1-2Lr)^2\right).$$ Using that $Lr < \frac{1}{2}$ we have $n=\Omega\left( \frac{\log \left( \frac{1}{\beta}\right)}{L^2 \alpha^2}(1-2Lr)\right).$ The proof is complete.

\end{proof}

\begin{proof}[Proof of Proposition \ref{propLB2}]
We argue by contradiction and consider an algorithm $\mathcal{A}$ satisfying the negation of our statement. Since $R>2\alpha$, for $\eta>0$ sufficiently small it holds $2\alpha+\eta<R.$ We consider a $\mathcal{D}_1$ which assigns mass $\frac{1}{2}-Lr$ to $-r-R,$ mass $\frac{1}{2}-Lr$ to $r+R$  and with probability $2Lr$ samples from the uniform distribution on $[-r,r]$ and a $\mathcal{D}_2$ which assigns mass  $\frac{1}{2}-Lr$ to $-r-R,$ mass $\frac{1}{2}-Lr$ to $r+R$ and with probability $2Lr$ samples from the uniform distribution on $[-r+2\alpha+\eta,r+2\alpha+\eta]$. Note that they are both admissible with the desired parameters and it also holds $m(\mathcal{D}_1)=0,m(\mathcal{D}_2)=2\alpha+\eta.$  

Using the accuracy guarantee of $\mathcal{A}$ and that the medians of the two distribution have distance strictly bigger than $2\alpha$, it necessarily holds
\begin{equation}\label{eq:inclusion}\mathbb{P}_{X_1,X_2,\ldots,X_n \overset{iid}{\sim} \mathcal{D}_1} [ |\mathcal{A}(X)-m\left(\mathcal{D}_2\right)|  \leq \alpha] \leq \mathbb{P}_{X_1,X_2,\ldots,X_n \overset{iid}{\sim} \mathcal{D}_1} [ |\mathcal{A}(X)-m\left(\mathcal{D}_1\right)|  \geq \alpha]  \leq \beta.
\end{equation}
The two distribution can be coupled in the following way: we first sample from $ \mathcal{D}_1.$ If it falls in $[-r+2\alpha+\eta,r]$ we keep the sample from $\mathcal{D}_1$ as is, and translate it by $+2r$ if it falls into $[-r,-r+2\alpha+\eta]$, to form a sample from $\mathcal{D}_2.$ In particular, notice that by sampling two $n$-tuples $X ,X' \in \mathbb{R}^n$ of samples in an i.i.d. sense from $\mathcal{D}_1$ and $\mathcal{D}_2$ respectively, under this coupling the $d_H(X,X')$ follows a $\mathrm{Binom}\left(n,(2\alpha+\eta) L \right)$. Hence using the definition of $\epsilon$-differential privacy 
\begin{equation}\label{eq:inclusion2}\mathbb{E}\left[e^{-\epsilon d_H(X,X')} \mathbb{P}_{X' \overset{iid}{\sim} \mathcal{D}_1} [ |\mathcal{A}(X')-m\left(\mathcal{D}_2\right)|  \leq \alpha] \right] \leq \mathbb{P}_{X_1,X_2,\ldots,X_n \overset{iid}{\sim} \mathcal{D}_1} [ |\mathcal{A}(X)-m\left(\mathcal{D}_2\right)|  \geq \alpha],
\end{equation}where the expectation is over the coupling mentioned above. Combining \eqref{eq:inclusion}, \eqref{eq:inclusion2} and the assumption on the accuracy performance of $\mathcal{A}$ we have 
\begin{equation*}\mathbb{E}\left[e^{-\epsilon d_H(X,X')}  \right] \leq \frac{\beta}{1-\beta}.
\end{equation*} Using the moment generating function of the Binomial distribution we conclude
\begin{equation*}\left(1-L(2\alpha+\eta)\left(1-e^{-\epsilon}\right)\right)^n \leq \frac{\beta}{1-\beta}
\end{equation*}which by basic asymptotics since $\beta<1/2$ translates to $n=\Omega\left( \frac{ \log \left(\frac{1}{\beta}\right)}{\epsilon L(\alpha+\eta) }\right).$ Since $\eta>0$ can be taken arbitrarily small, the proof of the proposition is complete.

\end{proof}

\begin{proof}[Proof of Proposition \ref{propLB3}]
We  argue by contradiction and consider an algorithm $\mathcal{A}$ satisfying the negation of our statement.

We consider the partition of the interval $[-R,R]$ into $N=\Omega\left(\frac{R}{\alpha}+1\right)$ consecutive intervals of width $3\alpha$ and let $m_i, i=1,2,\ldots,N+1$ be the endpoints of these intervals.

Now consider $N$ admissible distributions $\mathcal{D}_i,i=1,2,\ldots,N$ which for each $i,$ assign mass $\frac{1}{2}-Lr$ at each of the data-points $-2(R+r)$ and $2(R+r)$ and with probability $2Lr$ it draws a sample from the uniform distribution on $[m_i-r,m_i+r].$ 

By assumption for all $i=1,2,\ldots,N$ it holds
\begin{equation}\label{eq:accur}
\mathbb{P}_{X_1,X_2,\ldots,X_n \overset{iid}{\sim} \mathcal{D}_i} [ |\mathcal{A}(X_1,X_2,\ldots,X_n)-m_i|  \leq \alpha] \geq 1-\beta.
\end{equation}

Now since for all $i=2,\ldots,N+1$ ($i\not =1$) the distributions $\mathcal{D}_1,\mathcal{D}_i$ differ only on the interval that they are uniform on which they fall with probability $2Lr$, we can straightforwardly couple the $n$-tuples $X,X'$ sampled in an i.i.d. fashion from $\mathcal{D}_1,\mathcal{D}_i$ such that $d_H(X,X')$ follows a $\mathrm{Binom}\left(n,2Lr\right).$ Hence from the definition of $\epsilon$-differential privacy it holds for all $i=2,3,\ldots,N+1$
\begin{equation}\label{eq:priv} \mathbb{E}\left[e^{-\epsilon d_H(X,X')} \mathbb{P}_{X' \overset{iid}{\sim} \mathcal{D}_i} [ |\mathcal{A}(X')-m_i|  \leq \alpha] \right] \leq \mathbb{P}_{X \overset{iid}{\sim} \mathcal{D}_1} [ |\mathcal{A}(X)-m_i|,  \leq \alpha]  
\end{equation}where the expectation of the left hand side is under the aforementioned coupling.
Now using \eqref{eq:accur} and the moment generating function of the Binomial distribution, it holds for all $i=2,3,\ldots,N+1$
\begin{equation*} (1-\beta) \left(1-2Lr\left(1-e^{-\epsilon}\right)\right)^n  \leq \mathbb{P}_{X \overset{iid}{\sim} \mathcal{D}_1} [ |\mathcal{A}(X)-m_i|  \leq \alpha] 
\end{equation*}Now notice that the intervals $[m_i-\alpha,m_i+\alpha], i=2,3,\ldots,N$ are disjoint and therefore 
\begin{equation*} (1-\beta) N\left(1-2Lr\left(1-e^{-\epsilon}\right)\right)^n  \leq \mathbb{P}_{X \overset{iid}{\sim} \mathcal{D}_1} [\bigcup_{i=2}^{N+1} \{|\mathcal{A}(X)-m_i|  \leq \alpha\}] \leq 1. 
\end{equation*} By standard asymptotics and as $Lr \leq \frac{1}{2}$, $\left(1-2Lr\left(1-e^{-\epsilon}\right)\right)^n  =\Omega\left(e^{-Lr\epsilon n}\right).$ Hence combining with the last displayed inequality, it holds
\begin{equation*} \left(1-\beta\right)N=O\left(e^{Lr\epsilon n} \right).\end{equation*} As $\beta<1/2$ and $N=\Omega\left(\frac{R}{\alpha}+1\right)$ we conclude 
$$n=\Omega\left( \frac{\log \left(\frac{R}{ \alpha}+1\right)}{\epsilon Lr}\right).$$The proof is complete.

\end{proof}

\clearpage
\section{Comments about Truncated-Sampling}\label{appendix:truncated-sampling}
In this last subsection, for the sake of completeness, we will present how we can simulate a random generator 
for Categorical, Bernoulli, Uniform, Laplace, (Negative) Exponential Distribution and their truncated versions in an interval $I$ in $O(1)$ time and $O(1)$ random queries
of $\mathrm {Unif} [0,1]$ using the standard ``inverse-CDF'' sampling method.

The general inverse-CDF sampling method works as follows:

\begin{center}
\fbox{
\begin{minipage}{10cm}
//\texttt{Suppose that we want to simulate a distribution $\mathcal{D}$ s.t its cdf $F_{\mathcal{D}}$ is invertible.  }
\begin{enumerate}
    \item Generate a random number $u$ from the standard uniform distribution in the interval $\displaystyle [0,1]$, e.g. from $\displaystyle U\sim \mathrm {Unif} [0,1].$
    \item Find the inverse of the desired CDF, e.g. $\displaystyle F_{\mathcal{D}}^{-1}(x)$.
    \item Output $\displaystyle X=F_{\mathcal{D}}^{-1}(u)$.
\end{enumerate}
\end{minipage}
}
\end{center}

Indeed, the computed random variable ${\displaystyle X}$ follows the distribution $\mathcal{D}$, since
$$\Pr(X\leq x)=\Pr(F_{\mathcal{D}}^{-1}(U)\leq x) = \Pr(U\leq F_{\mathcal{D}}(x))=F_{\mathcal{D}}(x).$$

\subsection{\textsc{Uniform}}
We start with an arbitrary uniform distribution in an interval $I=[a,b]$:
\begin{algorithm}
$u\gets$ Sample from $\displaystyle U\sim \mathrm {Unif} [0,1]$\\
$s\gets a+(b-a)\times u$\\
\Return $s$
\caption{$\textsc{Uniform}\left[I=[a,b]\right]$}
\label{sampler:Uniform}
\end{algorithm}
\subsection{$k-$nary coin}
We continue with a $k-$nary coin, known as Categorical distribution, which
is a discrete probability distribution that describes the possible results of a random variable that can take on one of $k$ possible categories, 
with the probability of each category separately specified. 
\begin{algorithm}
$u\gets$ Sample from $\displaystyle U\sim \mathrm {Unif} [0,1]$\\
Compute CDF vector $(q_0,q_1,q_2,\cdots,q_{k-1},q_k)=(0,p_1,p_1+p_2,\cdots,\sum_{i=1}^{k-1}p_i,\sum_{i=1}^{k}p_i=1)$\\
\For{$i\in [k]$}
{
\If{$q_{i-1}\le s<q_{i}$}
{
\[\quad\]
$s\gets o_i$
}
}
\Return $s$
\caption{$k-$nary coin $\{o_1,\cdots,o_k\}$ with probability $p_1,\cdots,p_k$}
\label{sampler:coin}
\end{algorithm}
\subsection{\textsc{TruncatedExponential}}
Our next probability distibution is the truncated (Positive/Negative) Exponential distribution $\mathcal{D}_{\text{TruncExponential}}\sim 
 \exp\left(\alpha \omega+\beta\right) \quad \omega\in I=[\texttt{left},\texttt{right}]$. 
 \begin{itemize}
     \item If $\alpha=0$ then  the actual distribution is the uniform.
     \item If $\alpha\neq 0$ we get the CDF of $\mathcal{D}_{\text{TruncExponential}}$ is
 \[F_{\mathcal{D}_{\text{TruncExponential}}}(x|\alpha,\beta)=\dfrac{ \exp\left(\alpha x+\beta\right)- \exp\left(\alpha\cdot \texttt{left}+\beta\right)}{ \exp\left(\alpha\cdot \texttt{right}+\beta\right)- \exp\left(\alpha\cdot \texttt{left}+\beta\right)}.\]
 Thus we get that:
  \[F_{\mathcal{D}_{\text{TruncExponential}}}^{-1}(x|\alpha,\beta)=  \frac{1}{\alpha}\ln\Big(\exp\left(\alpha\cdot \texttt{left}+\beta\right)+ x\left( \exp\left(\alpha\cdot \texttt{right}+\beta\right)- \exp\left(\alpha\cdot \texttt{left}+\beta\right)\right)\Big)-\beta\]
 \end{itemize} 
\begin{algorithm}
$u\gets$ Sample from $\textsc{Uniform}\left[I=[0,1]=[F_{\mathcal{D}_{\text{TruncExponential}}}(\texttt{left}),F_{\mathcal{D}_{\text{TruncExponential}}}(\texttt{right})]\right]$\\
$s\gets F_{\mathcal{D}_{\text{TruncExponential}}}^{-1}(u)$\\
\Return $s$
\caption{$\textsc{TruncatedExponential}\left[\alpha:\text{scale},\beta:\text{shift},I=[\texttt{left},\texttt{right}]\right]$}
\label{sampler:Exponential}
\end{algorithm}
\subsection{\textsc{TruncatedLaplace}}
Our last probability distibution is the truncated Laplace distribution $D_{\text{TruncLaplace}}\sim 
 \frac{1}{2\sigma}\exp\left(-\frac{|\omega-\mu|}{\sigma}\right) \quad \omega\in I=[\texttt{left},\texttt{right}]$. 
 Here we firstly present the CDF and its inverse of the classical non-truncated version of Laplace distribution.
 \begin{itemize}
     \item  $ F_{\mathcal{D}_{\text{Laplace}}}(x|\mu,\sigma)=\frac{1}{2}+\frac{1}{2}\operatorname{sign}(x-\mu)\left(1-\frac{1}{\sigma}\exp\left(-\frac{|\omega-\mu|}{\sigma}\right)\right) $
     \item $ F_{\mathcal{D}_{\text{Laplace}}}^{-1}(x|\mu,\sigma)=\mu+\sigma\operatorname{sign}(x-\frac{1}{2})\ln\left(1-2|x-0.5|\right)$
 \end{itemize}
 \[F_{\mathcal{D}_{\text{TruncLaplace}}}(x|\mu,\sigma)=\frac{F_{\mathcal{D}_{\text{Laplace}}}(x|\mu,\sigma) - F_{\mathcal{D}_{\text{Laplace}}}(\texttt{left}|\mu,\sigma) }{F_{\mathcal{D}_{\text{Laplace}}}(\texttt{right}|\mu,\sigma) - F_{\mathcal{D}_{\text{Laplace}}}(\texttt{left}|\mu,\sigma)}.\]
 Thus we get that:
  \[F_{\mathcal{D}_{\text{TruncLaplace}}}^{-1}(x|\mu,\sigma)= F_{\mathcal{D}_{\text{Laplace}}}^{-1}\left( F_{\mathcal{D}_{\text{Laplace}}}(\texttt{left}|\mu,\sigma) + x\times\left( F_{\mathcal{D}_{\text{Laplace}}}(\texttt{right}|\mu,\sigma) - F_{\mathcal{D}_{\text{Laplace}}}(\texttt{left}|\mu,\sigma) \right)|\mu,\sigma\right) \]
\begin{algorithm}
$u\gets$ Sample from $\textsc{Uniform}\left[I=[0,1]=[F_{\mathcal{D}_{\text{TruncLaplace}}}(\texttt{left}),F_{\mathcal{D}_{\text{TruncLaplace}}}(\texttt{right})]\right]$\\
$s\gets F_{\mathcal{D}_{\text{TruncLaplace}}}^{-1}(u)$\\
\Return $s$
\caption{$\textsc{TruncatedLaplace}\left[\mu:\text{location},\sigma:\text{scale},I=[\texttt{left},\texttt{right}]\right]$}
\label{sampler:Laplace}
\end{algorithm}

\end{document}